\newtheorem{assumption}{Assumption}[section]
\newcommand{\set}[1]{\left\{#1\right\}}
\newcommand{\sets}[1]{\{#1\}}
\newcommand{\norm}[1]{\left\Vert#1\right\Vert}
\newcommand{\norms}[1]{\Vert#1\Vert}
\newcommand{\prox}{\mathrm{prox}}
\newcommand{\dom}[1]{\mathrm{dom}(#1)}
\newcommand{\iprods}[1]{\langle #1\rangle}
\newcommand{\ri}[1]{\mathrm{ri}\left(#1\right)}
\newcommand{\argmin}{\mathrm{arg}\!\displaystyle\min}
\newcommand{\R}{\mathbb{R}}
\newcommand{\Rext}{\R\cup\{+\infty\}}
\newcommand{\Xc}{\mathcal{X}}
\newcommand{\Yc}{\mathcal{Y}}
\newcommand{\Sc}{\mathcal{S}}
\newcommand{\Dc}{\mathcal{D}}
\newcommand{\Lc}{\mathcal{L}}
\newcommand{\Tc}{\mathcal{T}}
\newcommand{\Gc}{\mathcal{G}}
\newcommand{\Nc}{\mathcal{N}}
\newcommand{\BigO}[1]{\mathcal{O}\left(#1\right)}
\newcommand{\SmallO}[1]{o\left(#1\right)}
\newcommand{\SmallOs}[1]{o\big(#1\big)}
\newcommand{\LowerO}[1]{\Omega\left(#1\right)}
\newcommand{\mytbi}[1]{\textbf{\textit{#1}}}
\newcommand{\beforesec}{\vspace{0ex}}
\newcommand{\aftersec}{\vspace{0ex}}
\newcommand{\beforesubsec}{\vspace{0ex}}
\newcommand{\aftersubsec}{\vspace{0ex}}
\newcommand{\beforepara}{\vspace{0ex}}
\newcommand{\myeq}[2]{\vspace{-0.25ex}\begin{equation}\label{#1}#2\vspace{-0.5ex}\end{equation}}
\newcommand{\myeqn}[1]{\vspace{-0.25ex}\begin{equation*}#1\vspace{-0.5ex}\end{equation*}}
\newcommand{\myeqf}[2]{\vspace{-0.5ex}\begin{equation}\label{#1}#2\vspace{-0.5ex}\end{equation}}
\newcommand{\myeqfn}[1]{\vspace{-0.5ex}\begin{equation*}#1\vspace{-0.5ex}\end{equation*}}
\title{Non-Stationary First-Order Primal-Dual Algorithms with Faster Convergence Rates}
\author{
Quoc Tran-Dinh$^{\ast}$~~\and~~Yuzixuan Zhu\thanks{Department of Statistics and Operations Research, 
The University of North Carolina at Chapel Hill (UNC), 318-Hanes Hall, Chapel Hill, NC, 27599-3260, USA. 
\newline Corresponding author: {\tt quoctd@email.unc.edu}.} 
}
\begin{document}
\maketitle

\begin{abstract}
In this paper, we propose two novel non-stationary first-order primal-dual algorithms to solve nonsmooth composite convex optimization problems.
Unlike existing primal-dual schemes where the parameters are often fixed, our methods use pre-defined and dynamic sequences for parameters.
We prove that our first algorithm can achieve $\BigO{1/k}$ convergence rate on the primal-dual gap, and primal and dual objective residuals, where $k$ is the iteration counter. 
Our rate is on the non-ergodic (i.e., the last iterate) sequence of the primal problem and on the ergodic (i.e., the averaging) sequence of the dual problem, which we call semi-ergodic rate. 
By modifying the step-size update rule, this rate can be boosted even faster on the primal objective residual.
When the problem is strongly convex, we develop a second primal-dual algorithm that exhibits $\BigO{1/k^2}$ convergence rate on the same three types of guarantees.
Again by modifying the step-size update rule, this rate becomes faster on the primal objective residual.
Our primal-dual algorithms are the first ones to achieve such fast convergence rate guarantees under mild assumptions compared to existing works, to the best of our knowledge.
As byproducts, we apply our algorithms to solve constrained convex optimization problems and prove the same convergence rates on both the objective residuals and the feasibility violation.
We  still obtain at least $\BigO{1/k^2}$ rates even when the problem is ``semi-strongly'' convex.
We verify our theoretical results via two well-known numerical examples.

\vspace{1ex}
\noindent\textbf{Keywords:}
Non-stationary primal-dual method;  non-ergodic convergence rate; fast convergence rates; composite convex minimization; constrained convex optimization.
\end{abstract}

\noindent
\begin{AMS}
90C25, 90C06, 90-08
\end{AMS}

\beforesec
\section{Introduction}\label{sec:intro}
\aftersec
\beforepara
\paragraph{Problem statement}
In this paper, we develop new  first-order  primal-dual algorithms to solve the following classical composite convex minimization problem:
\myeq{eq:com_cvx}{
F^{\star} := \min_{x\in\R^p}\Big\{ F(x) := f(x) + g(Kx) \Big\},
}
where $f: \R^p\to\Rext$ and $g: \R^n\to\Rext$ are two proper, closed, and convex functions, and $K: \R^p\to\R^n$ is a given general linear operator.
Associated with the primal problem \eqref{eq:com_cvx}, we also consider its dual form as
\myeq{eq:dual_prob}{
G^{\star} := \min_{y\in\R^n}\Big\{ G(y) := f^{\ast}(-K^{\top}y) + g^{\ast}(y) \Big\},
}
where $f^{\ast}$ and $g^{\ast}$ are the Fenchel conjugates of $f$ and $g$, respectively.
We can combine the primal and dual problems \eqref{eq:com_cvx} and \eqref{eq:dual_prob} into the following min-max setting:
\myeq{eq:min_max}{
\min_{x\in\R^p}\max_{y\in\R^n}\Big\{ \widetilde{\Lc}(x, y) := f(x) + \iprods{Kx, y} - g^{\ast}(y) \Big\},
}
where $\widetilde{\Lc}(x, y)$ can be referred to as the Lagrange function of \eqref{eq:com_cvx} and \eqref{eq:dual_prob}, see \cite{Bauschke2011}.

\beforepara
\paragraph{A brief overview of primal-dual methods}
The study of first-order primal-dual methods for solving \eqref{eq:com_cvx} and \eqref{eq:dual_prob} has become extremely active in recent years, ranging from algorithmic development and convergence theory to applications, see, e.g., \cite{Bauschke2011,chambolle2016introduction,Esser2010a,glowinski2017splitting}.
This type of methods has close connection to other fields such as monotone inclusions, variational inequalities, and game theory \cite{Bauschke2011,Facchinei2003}.
They also have various direct applications in image and signal processing, machine learning, statistics, economics, and engineering, see, e.g., \cite{Chambolle2011,combettes2012primal,Esser2010}.

In our view, the study of first-order primal-dual methods for convex optimization can be divided into three main streams.
The first one is algorithmic development with numerous variants using different frameworks such as fixed-point theory, projective methods, monotone operator splitting, Fenchel duality and augmented Lagrangian frameworks, and variational inequality, see, e.g., \cite{BricenoArias2011,chen2016primal,Chen2013a,combettes2012primal,Esser2010,Goldstein2012,He2012b,malitsky2016first,Monteiro2011,Monteiro2010,spingarn1983partial,xu2017accelerated,yan2018new,Zhang2011,Zhu2008}.
Among different primal-dual variants for convex optimization, the general primal-dual hybrid gradient (PDHG) method proposed in \cite{Chambolle2011,Esser2010,pock2009algorithm,Zhu2008} appears to be the most general scheme that covers many existing variants, as investigated in \cite{chambolle2016introduction,Goldstein2015adaptive,conner2017equivalence}.
Using an appropriate reformulation of \eqref{eq:com_cvx}, \cite{conner2017equivalence} showed that the general PDHG scheme is in fact equivalent to Douglas-Rachford's splitting method \cite{Bauschke2011,Eckstein1992,Lions1979}, and, therefore, to ADMM in the dual setting.
Extensions to three operators and three  objective functions have also been studied in several works, including \cite{boct2015convergence,Condat2013,Davis2015,vu2013splitting}.
Other extensions to non-bilnear terms, Bregman distances, multi-objective terms, and stochastic variants have been also intensively investigated, see, e.g., in \cite{Bauschke2011,chambolle2017stochastic,Nemirovskii2004,hien2017inexact,tseng1990further,hamedani2018primal,yin2008bregman}.

The second stream is convergence analysis.
Existing works often use a gap function to measure the optimality of given approximate solutions \cite{Chambolle2011,Nemirovskii2004}.
This approach usually combines both primal and dual variables in one and uses, e.g.,  variational inequality frameworks to prove convergence, see, e.g., \cite{Eckstein1992,He2012b,Monteiro2011,Monteiro2010}.
An algorithmic-independent framework to characterize primal-dual gap certificates can be found in \cite{dunner2016primal}. 
Together with asymptotic convergence and linear convergence rates, many researchers have recently focused on sublinear convergence rates under weaker assumptions than strong convexity and smoothness or strong monotonicity-type and Lipschitz continuity conditions, see \cite{Bot2012,boct2015convergence,chambolle2016ergodic,Chen2013a,Davis2014,Davis2014b,He2012b,he2016accelerated,Monteiro2011,Monteiro2010,TranDinh2015b,xu2017accelerated} for more details.  
We emphasize that in general convex settings, such convergence rates are often achieved via averaging sequences on both primal and dual variables, which are much faster and easier to derive than on the sequence of last iterates.

The third stream is applications, especially in image and signal processing, see, e.g., \cite{Chambolle2011,chambolle2016introduction,combettes2011,combettes2011proximal,Esser2010a,connor2014primal}.
Recently, many primal-dual methods have also been applied to solving problems from machine learning, statistics, and engineering, see, e.g., \cite{chambolle2016introduction,glowinski2017splitting}. 
While theoretical results have shown that primal-dual methods may suffer from slow sublinear convergence rates under mild assumptions, their empirical convergence rates  are much better on concrete applications \cite{Chambolle2011,Esser2010}.

\beforepara 
\paragraph{Motivation}
In many applications, the desired solutions often have special structures such as sharp-edgedness in images, sparsity in signal processing and model selection, and low-rankness in matrix approximation. 
Such structures can be modeled using regularizers, constraints, or penalty functions, but unfortunately can be destroyed by algorithms that use \mytbi{ergodic} (i.e., averaging or weighted averaging) sequences as outputs, which is one of the reasons why many algorithms eventually take the \mytbi{non-ergodic} (i.e., last iterate) sequence as output while ignoring the fact that their convergence rate guarantee is proved based on an ergodic sequence.
In addition,  as observed in \cite{Davis2015}, the last-iterate sequence often has fast empirical convergence rate (e.g., up to linear rate).
This mismatch between theory and practice motivates us to develop new primal-dual algorithms that return the last iterates as outputs with rigorous convergence rate guarantees.
While non-ergodic convergence guarantees have recently been discussed in \cite{Davis2014a,Davis2014b} for several methods, it did not achieve the \textit{optimal} rate.
This paper develops two new first-order primal-dual schemes to fulfill this gap by using dynamic step-sizes, which leads to non-stationary methods, where the term ``non-stationary'' is adopted from \cite{liang2017local} for Douglas-Rachford methods.

Whereas $\BigO{1/k}$ convergence rate appears to be optimal under only convexity and strong duality assumptions when $k \leq \mathcal{O}(p)$, faster convergence rate for $k > \mathcal{O}(p)$ in primal-dual methods seems to not be known yet, especially in non-ergodic sense.
Recently, \cite{attouch2016rate} showed that Nesterov's accelerated method can exhibit up to $\SmallO{1/k^2}$ convergence rate when $k$ is sufficiently large compared to the problem dimension $p$.
This rate can only be achieved if $g$ has Lipschitz continuous gradient. 
This motivates us to consider such an acceleration in first-order primal-dual methods by adopting the approach in \cite{attouch2016rate}.
We show $\underline{o}\big(1/(k\sqrt{\log k})\big)$ non-ergodic convergence rate on the objective residual sequence in the sense that $\liminf_{k \to \infty} (k\sqrt{\log k})[F(x^k) - F^{\star}] = 0$ (\emph{cf.} \eqref{eq:underline_o_def})  without any smoothness or strong convexity-type assumption. 
A similar type of rate is also proved in \cite{Davis2014,Davis2014b} with $\SmallOs{1/\sqrt{k}}$ rate under the same assumption as ours, and $\SmallO{1/k}$ rate under additional assumption of strong convexity or smoothness (our non-ergodic rates are both $\BigO{1/k^2}$ and $\underline{o}\big(1/(k^2\sqrt{\log{k}})\big)$ in this case).

\beforepara
\paragraph{Our contributions}
To this end, our contributions are summarized as follows:

\begin{itemize}
\item[(a)]
We develop a new first-order primal-dual scheme, Algorithm \ref{alg:A1}, to solve primal and dual problems \eqref{eq:com_cvx} and \eqref{eq:dual_prob}. 
We prove the $\BigO{1/k}$ optimal convergence rate on three criteria: primal-dual gap, primal objective residual, and dual objective residual under only convexity and strong duality assumptions. 
Our guarantee is achieved in semi-ergodic sense, i.e., non-ergodic in primal variable and ergodic in dual variable.
For sufficiently large $k$ (i.e., $k > \BigO{p}$), by modifying the parameter update rules, we can show that our algorithm can be boosted up to $\min\sets{\BigO{1/k},\ \underline{o}(1/(k\sqrt{\log{k}}))}$ non-ergodic convergence rate on the primal objective residual. 
This rate is not slower than $\BigO{1/k}$ and empirically significantly faster than its counterpart with only $\BigO{1/k}$ rate.

\item[(b)]
If we apply Algorithm~\ref{alg:A1} to solve nonsmooth constrained convex optimization problems, then we can prove the same $\BigO{1/k}$ and  $\underline{o}\big(1/(k\sqrt{\log{k}})\big)$ convergence rates on the primal objective residual and the feasibility violation.

\item[(c)]
If $f$ of \eqref{eq:com_cvx} is strongly convex (or equivalently, its Fenchel conjugate $f^{\ast}$ is $L$-smooth), then we propose another first-order primal-dual algorithm, Algorithm \ref{alg:A2}, which achieves the $\BigO{1/k^2}$ optimal convergence rate on the same three criteria as of Algorithm~\ref{alg:A1}.
When $k$ is sufficiently large (i.e., $k > \BigO{p}$), by modifying the parameter update rules of Algorithm~\ref{alg:A2}, we obtain up to $\min\sets{\BigO{1/k^2},\ \underline{o}(1/(k^2\sqrt{\log{k}}))}$ non-ergodic convergence rates on \eqref{eq:com_cvx}.

\item[(d)]
If we modify Algorithm \ref{alg:A2} to solve the constrained convex problem \eqref{eq:constr_cvx2}, where the objective is semi-strongly convex (i.e., one objective term is strongly convex while the other term is non-strongly convex), then we prove the same $\BigO{1/k^2}$ and $\underline{o}\big(1/(k^2\sqrt{\log{k}})\big)$ rates  for both the primal objective residual and the feasibility violation.
\end{itemize}

\beforepara
\paragraph{Comparison}
We highlight some key differences between our algorithms and existing methods in terms of approach, algorithmic appearance, and theoretical guarantees.
First, unlike existing augmented Lagrangian-based methods, we view the augmented term  as a smoothed term for the indicator of linear constraints in the constrained reformulation \eqref{eq:constr_reform} of \eqref{eq:com_cvx}.
Next, we apply Nesterov's accelerated scheme to minimize this smoothed Lagrange function and simultaneously update the smoothness parameter (i.e., the penalty parameter) at each iteration in a homotopy fashion.

Second, Algorithm~\ref{alg:A1} has similar structure as Chambolle-Pock's method \cite{Chambolle2011,chambolle2016ergodic,conner2017equivalence}, a special case of PDHG, but it possesses a three-point momentum step depending on the iterates at the iterations $k$, $k-1$, and $k-2$, and makes use of dynamic parameters and step-sizes.
Algorithm~\ref{alg:A2} uses two proximal operators of the primal objective to obtain a non-ergodic convergence rate (but not required, see Subsection~\ref{subsec:alg2_full}).

Third, unlike existing works where the best-known convergence rates are often obtained via ergodic sequences, see, e.g., \cite{chambolle2016ergodic,Davis2014a,Davis2014b,He2012b,he2016accelerated,Monteiro2011,Monteiro2010}, our methods achieve the optimal convergence rates  in non-ergodic sense.
The $\BigO{1/k}$ ergodic optimal rate of primal-dual methods for solving \eqref{eq:com_cvx} is not new and has been proved in many papers. 
Their non-ergodic rates have just recently been proved, e.g., in \cite{tran2017proximal,tran2018adaptive,TranDinh2015b,valkonen2019inertial}. 
More precisely, \cite{tran2018adaptive, TranDinh2015b} utilize the Nesterov's smoothing technique in \cite{Nesterov2005c} and only derive primal convergence rates.
\cite{tran2017proximal} only handles constrained problems by applying the quadratic penalty function approach.
 \cite{valkonen2019inertial} relies on the well-known Chambolle-Pock scheme in \cite{Chambolle2011} by adding  inertial correction terms and adapting the parameters to achieve non-ergodic rates.
Nevertheless, our algorithm in this paper uses a completely different approach and achieves the $\BigO{1/k}$ rate on three criteria.

Finally, in addition to the $\BigO{1/k}$ non-ergodic rate on the primal objective residual, we also prove its $\underline{o}\big(1/(k\sqrt{\log{k}})\big)$ non-ergodic rate.
In comparison, \cite{Davis2014a} provides an intensive analysis of convergence rates for several methods to solve a more general problem than \eqref{eq:com_cvx}.
However, \cite{Davis2014a} does not provide new algorithms, and its convergence rate if applied to \eqref{eq:com_cvx} would become $\SmallOs{1/\sqrt{k}}$. 
Other related works include \cite{Davis2014, davis2016convergence,Davis2014b,Davis2015}.
Table \ref{pdne:table:convergence_results} non-exhaustively summarizes the best-known convergence rates of first-order primal-dual methods for solving \eqref{eq:com_cvx}, where we highlight that this paper contributes the fastest rates under corresponding assumptions.

\begin{table}[htbp!]
	\centering
	\caption{State-of-the-art results and our contributions to convergence rates of the primal objective residual sequence $\set{F(x) - F^\star}$ of first-order primal-dual algorithms for solving \eqref{eq:com_cvx}.
	Here, $f$ and $g$ are convex and possibly nonsmooth, and $g$ is Lipschitz continuous. In addition, we consider the assumption where $f$ is strongly convex.}
	\vspace{-1.5ex}
	\label{pdne:table:convergence_results}
	\resizebox{\textwidth}{!}{
	\begin{tabular}{llll}
		\hline\noalign{\smallskip}
		Assumption & Convergence type & Convergence rate & References\\
		\noalign{\smallskip}\hline\noalign{\smallskip}
    	\multirow{4}{*}{convex $f$ and $g$}
			& ergodic & $\BigO{1/k}$ & \cite{boct2014convergence, Chambolle2011, Davis2014a, Davis2014, davis2016convergence,Goldstein2012, Monteiro2010a, Monteiro2011, Monteiro2010, Ouyang2014}, etc.\\
			& non-ergodic & $\BigO{1/k}$ & \cite{tran2017proximal, tran2018adaptive, TranDinh2015b, TranDinh2012a, valkonen2019inertial} and \textbf{this work}\\
			& non-ergodic & $\SmallOs{1/\sqrt{k}}$ & \cite{Davis2014a, Davis2014, davis2016convergence}\\
			& non-ergodic & $\min\big\{\BigO{1/k},$ & \textbf{this work}\\
			&& \hfill $\underline{o}\big(1/(k\sqrt{\log k})\big)\big\}$\\
		\noalign{\smallskip}\hline\noalign{\smallskip}
		\multirow{4}{*}{strongly convex $f$ or $g^*$}
			& ergodic & $\BigO{1/k^2}$ & \cite{Chambolle2011, Goldstein2012, Monteiro2010a, Monteiro2011, Ouyang2014}, etc.\\
			& non-ergodic & $\BigO{1/k^2}$ & \cite{tran2017proximal, tran2018adaptive, TranDinh2015b, TranDinh2012a, valkonen2019inertial} and \textbf{this work}\\
			& best-iterate & $\SmallO{1/k}$ & \cite{Davis2014, Davis2014b}\\
			& non-ergodic & $\min\big\{\BigO{1/k^2}, $ & \textbf{this work}\\
			&& \hfill $\underline{o}\big(1/(k^2\sqrt{\log{k}})\big)\big\}$\\
		\noalign{\smallskip}\hline
	\end{tabular}}
\vspace{-1ex}	
\end{table}

\beforepara
\paragraph{Paper organization}
The rest of this paper is organized as follows.
Section~\ref{sec:pre_results} reviews some preliminary tools used in the sequel.
Section~\ref{sec:new_pd_algs} develops a new algorithm for the general convex case, investigates its convergence rate guarantees, and applies it to solve constrained convex problems.
Section~\ref{sec:strong_convexity} studies the strongly convex case with a new algorithm and its convergence guarantees.
It also presents an application to constrained convex problems under semi-strongly convex assumption.
Section~\ref{sec:num_experiments} provides two illustrative numerical examples.
For the sake of presentation, all technical proofs of the results in the main text are deferred to the appendices.

\beforesec
\section{Basic Assumption and Optimality Conditions}\label{sec:pre_results}
\aftersec
\beforepara
\paragraph{Basic notation and concepts}
We work with Euclidean spaces $\R^p$ and $\R^n$ equipped with standard inner product $\iprods{\cdot,\cdot}$ and norm $\norm{\cdot}$.
For any nonempty, closed, and convex set $\Xc$ in $\R^p$, $\ri{\Xc}$ denotes the relative interior of $\Xc$ and $\delta_{\Xc}(\cdot)$ denotes the indicator of $\Xc$.
For any proper, closed, and convex function $f: \R^p\to\Rext$, $\dom{f} := \set{x \in\R^p \mid f(x) < +\infty}$ is its (effective) domain, $f^{\ast}(y) := \sup_{x}\{ \iprods{x, y} - f(x) \}$ denotes the Fenchel conjugate of $f$, $\partial{f}(x) := \{ w\in\R^p \mid f(y) - f(x) \geq \iprods{w, y-x},~\forall y\in\dom{f} \}$ stands for the subdifferential of $f$ at $x$, and $\nabla{f}$ is the gradient or subgradient of $f$. 

A function $f$ is called $M_f$-Lipschitz continuous on $\dom{f}$ with a Lipschitz constant $M_f \in [0, +\infty)$ if $\vert f(x) - f(y) \vert \leq M_f\norms{x - y}$ for all $x, y\in\dom{f}$.
If $f$ is differentiable on $\dom{f}$ and $\nabla{f}$ is Lipschitz continuous with a Lipschitz constant $L_f \in [0, +\infty)$, i.e., $\norms{\nabla{f}(x) - \nabla{f}(y)} \leq L_f\norms{x - y}$ for $x, y\in\dom{f}$, then we say that $f$ is $L_f$-smooth.
If $f(\cdot) - \frac{\mu_f}{2}\norms{\cdot}^2$ is still convex for some $\mu_f > 0$, then we say that $f$ is $\mu_f$-strongly convex with a strong convexity parameter $\mu_f$.
We also denote $\prox_{f}(x) := \mathrm{arg}\min_{y}\{ f(y) + \tfrac{1}{2}\norms{y-x}^2 \}$ the proximal operator of $f$.
For any $\rho > 0$, we have the following Moreau's identity \cite{Bauschke2011}:
\myeqf{eq:moreau_identity}{
	\prox_{f/\rho}(x) + \rho^{-1} \prox_{\rho f^{\ast}}(\rho x) = x.
}
We use $\BigO{\cdot}$, $\SmallO{\cdot}$, and $\LowerO{\cdot}$ to denote the order of complexity bounds as usual. 
With convergence rates, for two scalar sequences $\sets{u_k} \subseteq \R_{+}$ and $\sets{v_k} \subseteq \R_{++}$, we say that
\myeqfn{
	u_k = \BigO{v_k}~~\text{if}~~\limsup_{k \to \infty} \left(\frac{u_k}{v_k}\right) < +\infty\qquad \text{and}\qquad u_k = \SmallO{v_k}~~\text{if}~~\lim_{k \to \infty} \left(\frac{u_k}{v_k}\right) = 0.
}
In this paper, we further define a new $\underline{o}(\cdot)$ notation for convergence rates as follows:
\myeqf{eq:underline_o_def}{
	u_k = \underline{o}(v_k)~~\text{if}~~\liminf_{k \to \infty}\left(\frac{u_k}{v_k}\right) = 0.
}
That is, there is a subsequence $\sets{k_j} \subseteq \mathbb{N}$ such that $u_{k_j} = \SmallOs{v_{k_j}}$.

Our algorithms rely on the following assumption imposed on the problem \eqref{eq:com_cvx}:

\begin{assumption}\label{as:A0}
The functions $f$ and $g$ in \eqref{eq:com_cvx} are proper, closed, and convex. 
The solution set $\Xc^{\star}$ of \eqref{eq:com_cvx} is nonempty, and $0 \in \ri{\dom{g} - K\dom{f}}$.
\end{assumption}

Assumption~\ref{as:A0} is fundamental and required in any primal-dual method for theoretical convergence guarantees.
Since $\Xc^{\star}$ is nonempty, under Assumption~\ref{as:A0}, the strong duality holds, thus we have $F^{\star} + G^{\star} = 0$, and the solution set $\Yc^{\star}$ of the dual problem \eqref{eq:dual_prob} is also nonempty. 

\beforepara
\paragraph{Optimality conditions}
The optimality conditions of  \eqref{eq:com_cvx} and  \eqref{eq:dual_prob} are
\myeqn{
	\text{primal:}~~0 \in \partial{f}(x^{\star}) + K^{\top}\partial{g}(Kx^{\star}) ~~~~\text{or dual:}~~ 0 \in - K\partial{f^{\ast}}(-K^{\top}y^{\star}) + \partial{g^{\ast}}(y^{\star}).
}
These two conditions can be written into the following primal-dual optimality condition, which can also be derived from the min-max form \eqref{eq:min_max}:
\begin{equation*}
	\text{primal-dual:}~~0 \in K^{\top}y^{\star}  + \partial{f}(x^{\star})~~~\text{and}~~~0 \in -Kx^{\star} + \partial{g^{\ast}}(y^{\star}).
\end{equation*}

\beforepara
\paragraph{Gap function}
Let $\widetilde{\Lc}(x,y) := f(x) + \iprods{Kx, y} - g^{\ast}(y)$ be defined by \eqref{eq:min_max} and $\Xc$ and $\Yc$ be given nonempty, closed, and convex sets such that $\Xc^{\star}\cap\Xc\neq\emptyset$ and $\Yc^{\star}\cap\Yc\neq\emptyset$.
We define a gap function $\Gc_{\Xc\times\Yc}(\cdot)$ as follows:
\myeq{eq:gap_func}{
	\Gc_{\Xc\times\Yc}(x, y) := \displaystyle\sup_{(\hat{x}, \hat{y}) \in \Xc \times \Yc} \Big\{\widetilde{\Lc}(x, \hat{y}) - \widetilde{\Lc}(\hat{x}, y) \Big\} 
	= \displaystyle\sup_{\hat{y} \in \Yc} \widetilde{\Lc} (x, \hat{y}) - \displaystyle\inf_{\hat{x} \in \Xc} \widetilde{\Lc} (\hat{x}, y).
}
Then, we immediately have
\myeqn{
	\Gc_{\Xc\times\Yc} (x, y) \geq \widetilde{\Lc} (x, y^\star) - \widetilde{\Lc} (x^\star, y) \geq \widetilde{\Lc}(x^\star, y^\star) - \widetilde{\Lc}(x^\star, y^\star) = 0,
}
where $(x^{\star}, y^{\star})\in\Xc^{\star}\times\Yc^{\star}$ is a primal-dual solution of \eqref{eq:com_cvx} and \eqref{eq:dual_prob}, i.e., a saddle-point of $\widetilde{\Lc}$. 
Moreover, $\Gc_{\Xc\times\Yc}$ vanishes at any saddle point $(x^\star, y^\star)$.
Thus this gap function can be considered as a measure of optimality for both \eqref{eq:com_cvx} and \eqref{eq:dual_prob}.

\beforepara
\paragraph{Constrained reformulation and merit function}
The primal problem \eqref{eq:com_cvx} can be reformulated into the following equivalent constrained setting:
\myeq{eq:constr_reform}{
	F^{\star} := \min_{x \in \R^p,\ r \in \R^n} \Big\{ F(x, r) := f(x) + g(r) ~~\textrm{s.t.}~~ Kx - r = 0 \Big\}.
}
Let $\Lc(x, r, y) :=  f(x) +  g(r) + \iprods{Kx - r, y}$ be the Lagrange function associated with \eqref{eq:constr_reform}, where $y \in\R^n$ is the corresponding Lagrange multiplier, and $\widetilde{\Lc}(x, y) := f(x) + \iprods{Kx, y} - g^{\ast}(y)$ be defined by \eqref{eq:min_max}.
Since $g^{\ast}(y) := \sup_{r\in\R^n}\set{ \iprods{y, r} - g(r)}$, we can show that, for any $r \in \R^n$, one has
\myeq{eq:lag_func12}{
\widetilde{\Lc}(x, y) \leq f(x) +  g(r) + \iprods{Kx - r, y} = \Lc(x, r, y).
}
Moreover, $\widetilde{\Lc}(x, y) =  \Lc(x, r, y)$ if and only if $y\in\partial{g}(r)$ or equivalently $r\in\partial{g^{\ast}}(y)$.

Together with $\Lc$, we define an augmented Lagrangian $\Lc_{\rho}$ as
\myeq{eq:aug_Lag}{
\Lc_{\rho}(x,r, y) := \Lc(x, r, y) + \frac{\rho}{2}\norms{Kx - r}^2 = f(x) + g(r) + \phi_{\rho}(x, r, y),
}
where $\phi_{\rho}(x, r, y) := \iprods{Kx - r, y} + \frac{\rho}{2}\norms{Kx - r}^2$ and $\rho > 0$ is a penalty parameter.
Note that the term $\frac{\rho}{2}\norms{Kx - r}^2$ can be viewed as a smoothed approximation of $\delta_{\set{(x,r) \mid Kx - r = 0}}(x,r)$, the indicator of $\set{(x,r) \mid Kx - r = 0}$.
The function $\Lc_{\rho}$ will serve as a \mytbi{merit function} to develop our algorithms in the sequel.

\beforesec
\section{A New Primal-Dual Algorithm for General Convex Case}\label{sec:new_pd_algs}
\aftersec
In this section, we develop a novel primal-dual algorithm to solve \eqref{eq:com_cvx} and its dual form \eqref{eq:dual_prob} with fast convergence rate guarantees, where $f$ and $g$ are both \mytbi{merely convex}.

\beforesubsec
\subsection{\bf \bf Algorithm derivation and one-iteration analysis}
\aftersubsec
Our main idea is to combine four  techniques in one: alternating minimization, linearization, acceleration, and homotopy. 
While each individual technique is classical, their entire combination is \mytbi{new}.
At the iteration $k \geq 0$, given $x^k$, $\tilde{x}^k$, $r^k$, and  $\tilde{y}^k$, we update
\myeq{eq:pd_scheme1}{
\left\{\begin{array}{lcl}
	\hat{x}^k & := & (1-\tau_k)x^k + \tau_k\tilde{x}^k,\vspace{1ex}\\
	r^{k+1} & := & \prox_{g/\rho_k}\big( \tilde{y}^k/\rho_k + K\hat{x}^k\big),\vspace{1ex}\\
	x^{k+1} & := & \prox_{\beta_kf}\big(\hat{x}^k -  \beta_k\nabla_x\phi_{\rho_k}(\hat{x}^k, r^{k+1}, \tilde{y}^k)\big),\vspace{1ex}\\
	\tilde{x}^{k+1} & := & \tilde{x}^k + \frac{1}{\tau_k}(x^{k+1} - \hat{x}^k),\vspace{1ex}\\
	\tilde{y}^{k+1} & := & \tilde{y}^k + \eta_k \left[Kx^{k+1} - r^{k+1} - (1-\tau_k)(Kx^k - r^k)\right].
\end{array}\right.
}
We now explain each step of the scheme \eqref{eq:pd_scheme1} as follows:
\begin{itemize}
\item Line 2 and line 3 of \eqref{eq:pd_scheme1} alternatively minimize the merit function $\Lc_{\rho}$ w.r.t. $r$ and $x$ to obtain $r^{k+1}$ and $x^{k+1}$, respectively. 
However, since the subproblem in $x^{k + 1}$ is difficult to solve, we linearize the coupling term $\frac{\rho}{2}\norms{Kx - r}^2$ as 
\myeqn{
\begin{array}{ll}
	\frac{\rho_k}2 \norms{Kx - r^{k + 1}}^2 & \approx  \frac{\rho_k}2 \norms{K\hat{x}^k - r^{k + 1}}^2\vspace{1ex}\\
	& + {~} \frac{\rho_k}2 \iprods{\nabla_x \norms{K\hat{x}^k - r^{k + 1}}^2, x - \hat{x}^k} + \frac1{2\beta_k} \norms{x - \hat{x}^k}^2,
\end{array}
}
so that we can simply use the proximal operator of $f$ as in line 3.
\item Line 1 and line 4  update $\hat{x}^k$ and $\tilde{x}^{k + 1}$, respectively to accelerate the primal iterates using Nesterov's acceleration strategy \cite{Nesterov1983}.
\item Line 5 updates the dual variable $\tilde{y}^{k + 1}$ as in augmented Lagrangian methods.
\end{itemize}
All the parameters $\tau_k\in (0, 1]$, $\rho_k > 0$, $\beta_k > 0$, and $\eta_k > 0$ will be updated in a homotopy fashion.
We will explicitly provide update rules for these parameters in Algorithm~\ref{alg:A1} based on our convergence analysis.

The following lemma provides a key estimate on the difference $\Lc_{\rho_{k - 1}} (x^k, r^k, y) - \Lc(x, r, \bar{y}^k)$ to prove Theorems~\ref{th:convergence_guarantee1} and \ref{th:faster_convergence_rate}. 
The proof is deferred to Appendix \ref{apdx:le:key_bound1}.

\begin{lemma}\label{le:key_bound1}
Let $(x^k, \hat{x}^k, \tilde{x}^k, r^k, \tilde{y}^k)$ be  updated by \eqref{eq:pd_scheme1} with $\rho_k > \eta_k$ and $\bar{y}^{k + 1} := (1 - \tau_k)\bar{y}^k + \tau_k \left[\tilde{y}^k + \rho_k (K\hat{x}^k - r^{k + 1})\right]$.
Then, for any $(x, r, y)\in\R^p\times \R^n\times\R^n$, the following inequality holds:
\myeq{eq:key_bound1}{
\hspace{-4ex}
\arraycolsep=0.3em
\begin{array}{ll}
	&\left[\Lc_{\rho_k} (x^{k + 1}, r^{k + 1}, y) - \Lc(x, r, \bar{y}^{k + 1})\right] \leq (1 - \tau_k)\left[ \Lc_{\rho_{k - 1}} (x^k, r^k, y) - \Lc(x, r, \bar{y}^k)\right] \vspace{1ex}\\
	& \quad\quad\quad + {~} \frac{\tau_k^2}{2\beta_k} \left[ \norms{\tilde{x}^k - x}^2 - \norms{\tilde{x}^{k + 1} - x}^2\right] + \frac1{2\eta_k} \left[ \norms{\tilde{y}^k - y}^2 - \norms{\tilde{y}^{k + 1} - y}^2\right] \vspace{1ex}\\
	& \quad\quad\quad - {~} \frac12 \left(\frac1{\beta_k} - \frac{\rho_k^2 \norms{K}^2}{\rho_k - \eta_k}\right)\norms{x^{k + 1} - \hat{x}^k}^2 - \frac{(1 - \tau_k)}{2}\left[\rho_{k - 1} - (1 - \tau_k)\rho_k\right] \norms{Kx^k - r^k}^2.
\end{array}
\hspace{-3ex}
}
\end{lemma}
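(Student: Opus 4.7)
The plan is to derive one-iteration descent inequalities from each of the three updates in \eqref{eq:pd_scheme1} and assemble them using the Nesterov-style convex-combination structure. The first key observation is that first-order optimality in the $r^{k+1}$-subproblem identifies the subgradient $\hat y^k := \tilde y^k + \rho_k(K\hat x^k - r^{k+1}) \in \partial g(r^{k+1})$, while optimality of the $x^{k+1}$-subproblem yields $-K^\top\hat y^k - \frac{1}{\beta_k}(x^{k+1} - \hat x^k) \in \partial f(x^{k+1})$ (since $\nabla_x\phi_{\rho_k}(\hat x^k, r^{k+1}, \tilde y^k) = K^\top\hat y^k$). Applying convexity of $f$ and $g$ at the two test points $\{x^k, x\}$ and $\{r^k, r\}$ respectively, weighting by $1-\tau_k$ and $\tau_k$, and invoking the identities $x^{k+1} = (1-\tau_k)x^k + \tau_k\tilde x^{k+1}$ and $x^{k+1} - \hat x^k = \tau_k(\tilde x^{k+1} - \tilde x^k)$ (which follow directly from the $\tilde x^{k+1}$-update) produces the Nesterov bracket $\frac{\tau_k^2}{2\beta_k}[\|\tilde x^k - x\|^2 - \|\tilde x^{k+1} - x\|^2]$ and the descent term $-\frac{1}{2\beta_k}\|x^{k+1} - \hat x^k\|^2$. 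After this combination, the $\hat y^k$-cross products simplify to $\tau_k\langle Kx - r, \hat y^k\rangle - \langle \hat y^k, Kx^{k+1} - r^{k+1} - (1-\tau_k)(Kx^k - r^k)\rangle$.

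The first cross product merges with $(1-\tau_k)\langle Kx - r, \bar y^k\rangle$ into $\langle Kx - r, \bar y^{k+1}\rangle$ by linearity of $\Lc$ in its dual argument, which is exactly what motivates the otherwise mysterious update rule $\bar y^{k+1} = (1-\tau_k)\bar y^k + \tau_k\hat y^k$. For the second, I plug in $\hat y^k = \tilde y^k + \rho_k(K\hat x^k - r^{k+1})$ and use the dual update identity $Kx^{k+1} - r^{k+1} - (1-\tau_k)(Kx^k - r^k) = \eta_k^{-1}(\tilde y^{k+1} - \tilde y^k)$. Adding back the terms $\langle Kx^{k+1} - r^{k+1}, y\rangle - (1-\tau_k)\langle Kx^k - r^k, y\rangle$ needed to reconstruct $\Lc_{\rho_k}(x^{k+1}, r^{k+1}, y)$ and $(1-\tau_k)\Lc_{\rho_{k-1}}(x^k, r^k, y)$, and applying the polarization $2\langle \tilde y^{k+1} - \tilde y^k, y - \tilde y^k\rangle = \|\tilde y^{k+1} - \tilde y^k\|^2 + \|y - \tilde y^k\|^2 - \|\tilde y^{k+1} - y\|^2$, yields the target dual bracket $\frac{1}{2\eta_k}[\|\tilde y^k - y\|^2 - \|\tilde y^{k+1} - y\|^2]$ plus a positive residual $\frac{\eta_k}{2}\|Kx^{k+1} - r^{k+1} - (1-\tau_k)(Kx^k - r^k)\|^2$.

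The main obstacle is the final absorption step. Denote $A := Kx^{k+1} - r^{k+1} - (1-\tau_k)(Kx^k - r^k)$ and $b := Kx^k - r^k$. Using $Kx^{k+1} - r^{k+1} = A + (1-\tau_k)b$ and $K\hat x^k - r^{k+1} = A + (1-\tau_k)b - K(x^{k+1} - \hat x^k)$, I rewrite the dual residual $\frac{\eta_k}{2}\|A\|^2$, the leftover cross term $-\rho_k\langle K\hat x^k - r^{k+1}, A\rangle$ from the $\rho_k$-part of the split, and the penalty-change quadratic $\frac{\rho_k}{2}\|Kx^{k+1} - r^{k+1}\|^2 - \frac{(1-\tau_k)\rho_{k-1}}{2}\|b\|^2$ as $-\frac{\rho_k - \eta_k}{2}\|A\|^2 + \rho_k\langle K(x^{k+1} - \hat x^k), A\rangle - \frac{(1-\tau_k)}{2}[\rho_{k-1} - (1-\tau_k)\rho_k]\|b\|^2$; remarkably, the two $\langle A, b\rangle$ cross terms cancel exactly, leaving the penalty-change coefficient in its target form. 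Finally, Young's inequality with parameter $\rho_k - \eta_k > 0$ (which is precisely why the hypothesis $\rho_k > \eta_k$ is imposed) bounds the first two summands by $\frac{\rho_k^2\|K\|^2}{2(\rho_k - \eta_k)}\|x^{k+1} - \hat x^k\|^2$, and combining with the earlier $-\frac{1}{2\beta_k}\|x^{k+1} - \hat x^k\|^2$ from the $f$-descent produces the coefficient $-\frac{1}{2}[\frac{1}{\beta_k} - \frac{\rho_k^2\|K\|^2}{\rho_k - \eta_k}]$ in \eqref{eq:key_bound1}.
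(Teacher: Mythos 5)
Your proposal is correct and the steps check out: the $(1-\tau_k,\tau_k)$-weighted convexity at the prox points with the subgradient $\hat{y}^k=\tilde{y}^k+\rho_k(K\hat{x}^k-r^{k+1})$, the absorption of $\tau_k\langle Kx-r,\hat{y}^k\rangle$ into $\bar{y}^{k+1}$, the dual polarization via $\tilde{y}^{k+1}-\tilde{y}^k=\eta_k A$, the exact cancellation of the $\langle A,Kx^k-r^k\rangle$ cross terms, and the Young step with parameter $\rho_k-\eta_k$ indeed reproduce every coefficient in \eqref{eq:key_bound1}, and this is essentially the paper's own route (Appendix~\ref{apdx:le:key_bound1}). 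The only difference is bookkeeping: the paper carries the augmented model $\Lc_{\rho_k}$ at the comparison points and applies the weighted-norm identity of Lemma~\ref{lm:facts}(b) twice, whereas you use plain convexity plus explicit cross-term cancellation and a single Young inequality, which is algebraically equivalent and slightly leaner.
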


\subsection{The complete algorithm}\label{subsec:alg1}
To transform our scheme \eqref{eq:pd_scheme1} into a primal-dual format, we first eliminate $r^k$ and $r^{k+1}$. 
By Moreau's identity \eqref{eq:moreau_identity}, we have
\myeq{eq:rk_step}{
	r^{k+1} = \tfrac{1}{\rho_k}\big( \tilde{y}^k + \rho_kK\hat{x}^k - y^{k+1}\big), \quad \text{where}\quad y^{k+1} := \prox_{\rho_kg^{\ast}}\big(\tilde{y}^k + \rho_kK\hat{x}^k\big).
}
Now, from the definition of $\phi_\rho$ in \eqref{eq:aug_Lag}, we can write
\myeqn{
	\nabla_x\phi_{\rho_k}(\hat{x}^k, r^{k+1}, \tilde{y}^k) = K^{\top}\big(\tilde{y}^k + \rho_kK\hat{x}^k - \rho_kr^{k+1}\big) = K^{\top}y^{k+1}.
}
Substituting this expression into line 3 of \eqref{eq:pd_scheme1}, we can eliminate $r^{k+1}$. 
Next, we combine line 1 and line 4 of \eqref{eq:pd_scheme1} to obtain $\hat{x}^{k+1} = x^{k+1} + \frac{\tau_{k+1}(1-\tau_k)}{\tau_k}(x^{k+1} - x^k)$.
Finally,  substituting $r^k$ using \eqref{eq:rk_step} into line 5 of \eqref{eq:pd_scheme1}, we can express $\tilde{y}^{k + 1}$ as
\myeq{eq:update_yhat}{
\arraycolsep=0.27em
\begin{array}{lcl}
	\tilde{y}^{k+1} &= & \tilde{y}^k + \eta_kK\big(x^{k+1} - \hat{x}^k - (1-\tau_k)(x^k - \hat{x}^{k-1})\big) \vspace{1ex}\\
	& & - {~} \frac{\eta_k}{\rho_k}(\tilde{y}^k - y^{k+1}) + \frac{\eta_k(1-\tau_k)}{\rho_{k-1}}(\tilde{y}^{k-1} - y^k).
\end{array}
}
In addition to \eqref{eq:pd_scheme1}, we also update $y$ using the following weighted averaging step:
\myeq{eq:dual_avarging}{
	\bar{y}^{k+1} := (1-\tau_k)\bar{y}^k + \tau_ky^{k+1},
}
where $y^{k + 1}$ is defined in \eqref{eq:rk_step}. This is consistent with the condition in Lemma \ref{le:key_bound1}. 

For the parameters, as guided by Lemma~\ref{le:key_bound1}, we propose the following update:
\myeq{eq:update_rule1}{
	\tau_k := \frac{c}{k+c}, \qquad \rho_k := \frac{\rho_0}{\tau_k}, \qquad \beta_k := \frac{\gamma}{\norms{K}^2\rho_k} \quad \text{and} \quad \eta_k := (1-\gamma)\rho_k,
}
where $c \geq 1$, $\gamma \in (0, 1)$, and $\rho_0 > 0$ are given. 

In summary, we describe the complete primal-dual algorithm as in Algorithm \ref{alg:A1}.

\begin{algorithm}[htbp!]\caption{(New Primal-Dual Algorithm for \eqref{eq:com_cvx} and \eqref{eq:dual_prob}: General Convex Case)}\label{alg:A1}
\normalsize
\begin{algorithmic}[1]
   \State{\bfseries Initialization:} Choose $x^0 \in\R^p$, $y^0\in\R^n$, $\rho_0 > 0$, $c \geq 1$, and $\gamma \in (0, 1)$.
   \vspace{0.5ex}
   \State Set $\tau_0 := 1$, ~$x^{-1} := \hat{x}^0 := x^0$, and $\tilde{y}^{-1} := \tilde{y}^0 := \bar{y}^0 := y^0$.
   \vspace{0.5ex}
   \State{\bfseries For $k := 0, 1, \cdots, k_{\max}$ do}
   \vspace{0.5ex}   
   \State\hspace{3ex}\label{step:i1} 
    Update $\rho_k := \frac{\rho_0}{\tau_k}$, ~$\beta_k := \frac{\gamma}{\norms{K}^2\rho_k}$, ~$\eta_k := (1-\gamma)\rho_k$, and $\tau_{k+1} := \frac{c}{k + c + 1}$.
    \vspace{0.5ex}   
	\State\hspace{3ex}\label{step:i2} Update the primal-dual step:
	\vspace{-1ex}
	\myeqn{
		\arraycolsep=0.27em
		\left\{\begin{array}{lcl}
			y^{k+1} & := & \prox_{\rho_kg^{\ast}}\big(\tilde{y}^k + \rho_kK\hat{x}^k\big),\vspace{1ex}\\
			x^{k+1} & := & \prox_{\beta_kf}\big(\hat{x}^k - \beta_kK^{\top}y^{k+1}\big),\vspace{1ex}\\
			\hat{x}^{k+1} &:= & x^{k+1} + \frac{\tau_{k+1}(1-\tau_k)}{\tau_k} (x^{k+1} - x^k).
		\end{array}\right.
		\vspace{-1ex}	
	}
    \State\hspace{3ex}\label{step:i3} Update the intermediate dual step:
    \vspace{-1ex}
	\myeqn{
		\arraycolsep=0.27em
		\begin{array}{lcl}
			\tilde{y}^{k+1} & := & \tilde{y}^k + \eta_k K\big[x^{k+1} - \hat{x}^k - (1-\tau_k)(x^k - \hat{x}^{k-1})\big]\vspace{1ex}\\
			& & + {~} (1 - \gamma)\left[y^{k + 1} - \tilde{y}^k - \frac{\tau_{k - 1} (1 - \tau_k)}{\tau_k} (y^k - \tilde{y}^{k - 1})\right].
		\end{array}
		\vspace{-1ex}	
	}
	\State\hspace{3ex}\label{step:i4} Update the dual averaging step:	 $\bar{y}^{k+1} := (1-\tau_k)\bar{y}^k + \tau_ky^{k+1}$.
    \vspace{0.5ex}   
   \State{\bfseries EndFor}
\end{algorithmic}
\end{algorithm}

\noindent Let us highlight the following features of Algorithm~\ref{alg:A1}.
\begin{itemize}
\item Algorithm~\ref{alg:A1} updates its parameters at Step~\ref{step:i1} dynamically. 
The update of $\tau_k$ is often seen in Nesterov's accelerated-based schemes.
The penalty parameter $\rho_k$ is not fixed, but is updated in a homotopy fashion and also different from the dual step-size $\eta_k$.
The dual update at Step~\ref{step:i3} is completely new and depends on three consecutive iterations.
All these properties are fundamentally different from existing primal-dual and augmented Lagrangian-based methods. 

\item We use two parameters $\gamma \in (0, 1)$ and $\rho_0 > 0$ to balance the primal term $\norms{x^0 - x^{\star}}^2$ and dual term $\norms{y^0 - y^{\star}}^2$ in the bound \eqref{eq:convergence_bound1} of Theorem~\ref{th:convergence_guarantee1} below.
Note that our update leads to $\rho_k\beta_k\norms{K}^2 = \gamma < 1$, which is the same as the parameter condition in the Chambolle-Pock primal-dual method \cite{Chambolle2011}.

\item The per-iteration complexity of Algorithm~\ref{alg:A1} is essentially the same as in existing primal-dual methods. 
It requires one $\prox_{\rho_kg^{\ast}}$, one $\prox_{\beta_kf}$, one $Kx$, and one $K^{\top}y$.
The matrix-vector multiplication at Step~\ref{step:i3} can be eliminated if we store $Kx^k$ and $Kx^{k + 1}$, and use the last line of Step~\ref{step:i2} to compute $K\hat{x}^k$.
\end{itemize}

\beforesubsec
\subsection{\bf Convergence analysis}\label{subsec:convergence_thm_general}
\aftersubsec
The following theorem states convergence guarantees of Algorithm~\ref{alg:A1} under Assumption~\ref{as:A0} with $c = 1$ without any smoothness or strong convexity assumption. Its proof is given in  Appendix~\ref{apdx:th:convergence_guarantee1}.

\begin{theorem}[$\BigO{1/k}$ convergence rates when $c = 1$]\label{th:convergence_guarantee1}
Let $\sets{(x^k, \bar{y}^k)}$ be generated by Algorithm~\ref{alg:A1} with $c := 1$, and $\widetilde{\Lc}$ be defined by \eqref{eq:min_max}. Then, under Assumption~\ref{as:A0}, the following bound is valid for any given $x, x^0\in\R^p$ and $y, y^0\in\R^n$:
\myeq{eq:convergence_bound1}{
	\widetilde{\Lc}(x^k, y) - \widetilde{\Lc}(x, \bar{y}^k) \leq \frac{1}{2k}\left[\frac{\rho_0\norms{K}^2\norms{x^0 - x}^2}{\gamma} + \frac{\norms{y^0 - y}^2}{(1-\gamma)\rho_0} \right].
}
Furthermore, the following statements hold:
\begin{itemize}
\item[\textnormal{(a)}] \textnormal{(Semi-ergodic convergence on the primal-dual gap).} The gap function $\Gc_{\Xc\times\Yc}$ defined by \eqref{eq:gap_func} satisfies the following bound for all $k \geq 1$:
\myeq{eq:primal_dual_bound1}{
	0 \leq \Gc_{\Xc\times\Yc} (x^k, \bar{y}^k) \leq \frac1{2k}  \sup_{(x, y) \in \Xc \times \Yc}  \left[\frac{\rho_0\norms{K}^2\norms{x^0 - x}^2}{\gamma} + \frac{\norms{y^0 - y}^2}{(1-\gamma)\rho_0} \right].{\!\!\!\!\!\!\!}
}
Hence, the primal-dual gap sequence $\set{\Gc_{\Xc\times\Yc}(x^k, \bar{y}^k)}$ converges to zero at $\BigO{1/k}$ rate in semi-ergodic sense, i.e., non-ergodic in $x^k$ and ergodic in $\bar{y}^k$. 

\item[\textnormal{(b)}] \textnormal{(Non-ergodic convergence on the primal objective).} 
If $g$ is $M_g$-Lipschitz continuous on $\dom{g}$ and $x^\star$ is an optimal solution of \eqref{eq:com_cvx}, then,  for $k \geq 1$, the primal objective residual based on the last-iterate sequence $\set{x^k}$ satisfies:
\myeq{eq:primal_bound1}{
	0 \leq F(x^k) - F^{\star} \leq \frac{1}{2k}\left[\frac{\rho_0\norms{K}^2\norms{x^0 - x^{\star}}^2}{\gamma}  + \frac{D^2_g}{(1-\gamma)\rho_0} \right],
}
where $D_g := \sup\set{ \norms{y^0 - y} \mid \norms{y} \leq M_g}$. 
Hence, $\sets{F(x^k)}$ converges to the primal optimal value $F^{\star}$ of \eqref{eq:com_cvx} at $\BigO{1/k}$ rate in non-ergodic sense.

\item[\textnormal{(c)}] \textnormal{(Ergodic convergence on the dual objective).} 
If $f^{\ast}$ is $M_{f^{\ast}}$-Lipschitz continuous on $\dom{f^{\ast}}$ and $y^\star$ is an optimal solution of \eqref{eq:dual_prob}, then, for all $k \geq 1$, the dual objective residual based on the averaging sequence $\sets{\bar{y}^k}$ satisfies:
\myeq{eq:dual_bound1}{
	0 \leq G(\bar{y}^k) - G^{\star} \leq \frac{1}{2k}\left[\frac{\rho_0\norms{K}^2 D_{f^{\ast}}^2 }{\gamma}  + \frac{ \norms{y^0 - y^{\star}}^2}{(1-\gamma)\rho_0} \right],
}
where $D_{f^{\ast}} := \sup\set{\norms{x^0 - x} \mid \norm{x} \leq M_{f^{\ast}}}$.
Hence, $\sets{G(\bar{y}^k)}$ converges to the dual optimal value $G^{\star}$ of \eqref{eq:dual_prob} at $\BigO{1/k}$ rate in ergodic sense. 
\end{itemize}
\end{theorem}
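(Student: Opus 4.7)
The plan is to use Lemma~\ref{le:key_bound1} as a one-step recursion and telescope it across iterations, relying on two algebraic identities that the parameter choices in \eqref{eq:update_rule1} (with $c = 1$) guarantee. First I verify that with $\tau_k = 1/(k+1)$, $\rho_k = \rho_0(k+1)$, $\beta_k = \gamma/(\norms{K}^2\rho_0(k+1))$, and $\eta_k = (1-\gamma)\rho_0(k+1)$, the two bracketed coefficients on the last line of \eqref{eq:key_bound1} both vanish: since $\rho_k - \eta_k = \gamma\rho_k$ we have $1/\beta_k = \rho_k\norms{K}^2/\gamma = \rho_k^2\norms{K}^2/(\rho_k - \eta_k)$, while $(1-\tau_k)\rho_k = (k/(k+1))\rho_0(k+1) = k\rho_0 = \rho_{k-1}$ for $k \geq 1$. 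Both correction terms are therefore identically zero and can be dropped, reducing \eqref{eq:key_bound1} to a clean recursion $A_{k+1} \leq (1-\tau_k)A_k + \frac{\tau_k^2}{2\beta_k}\Delta_x^k + \frac{1}{2\eta_k}\Delta_y^k$, where $A_k := \Lc_{\rho_{k-1}}(x^k, r^k, y) - \Lc(x, r, \bar{y}^k)$ and $\Delta_x^k, \Delta_y^k$ are the telescoping squared-distance differences in \eqref{eq:key_bound1}.

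I then multiply through by the weight $w_{k+1} := k+1$. This weight satisfies $w_{k+1}(1-\tau_k) = w_k$ (with $w_0 = 0$ since $\tau_0 = 1$), and it also collapses the step-dependent coefficients to constants: $w_{k+1}\tau_k^2/(2\beta_k) = \rho_0\norms{K}^2/(2\gamma)$ and $w_{k+1}/(2\eta_k) = 1/(2(1-\gamma)\rho_0)$. Summing for $k = 0, \ldots, K-1$ telescopes both the $A$-recursion and the squared-distance differences; using the initializations $\tilde{x}^0 = x^0$, $\tilde{y}^0 = y^0$ and discarding the non-positive leftover terms $-\norms{\tilde{x}^K - x}^2$ and $-\norms{\tilde{y}^K - y}^2$ produces
\begin{equation*}
K\, A_K \leq \frac{\rho_0\norms{K}^2}{2\gamma}\norms{x^0 - x}^2 + \frac{1}{2(1-\gamma)\rho_0}\norms{y^0 - y}^2
\end{equation*}
for every $(x, r, y)$.

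To pass from $A_K$ to $\widetilde{\Lc}(x^K, y) - \widetilde{\Lc}(x, \bar{y}^K)$, I take the supremum over $r \in \R^n$ on both sides. The right-hand side is independent of $r$, and by definition of the Fenchel conjugate $\sup_r[-\Lc(x, r, \bar{y}^K)] = -\inf_r\Lc(x, r, \bar{y}^K) = -\widetilde{\Lc}(x, \bar{y}^K)$. Combined with the lower bound $\Lc_{\rho_{K-1}}(x^K, r^K, y) \geq \widetilde{\Lc}(x^K, y)$, implied by \eqref{eq:lag_func12} together with non-negativity of the augmented term, this establishes the main bound \eqref{eq:convergence_bound1}.

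The three conclusions then follow by instantiating $(x, y)$ in \eqref{eq:convergence_bound1} appropriately. For (a), taking $\sup_{(x,y)\in\Xc\times\Yc}$ on both sides yields \eqref{eq:primal_dual_bound1}, because $\Gc_{\Xc\times\Yc}(x^k, \bar{y}^k)$ is by definition this supremum. For (b), set $x = x^\star$ and use $\widetilde{\Lc}(x^\star, \bar{y}^K) \leq \sup_y\widetilde{\Lc}(x^\star, y) = F(x^\star) = F^\star$; then take sup over $\norms{y} \leq M_g$, which contains $\dom{g^\ast}$ when $g$ is $M_g$-Lipschitz, so the left-hand side becomes $F(x^K) - F^\star$ and $\norms{y^0 - y}^2$ is bounded by $D_g^2$. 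Part (c) is symmetric: set $y = y^\star$, use $\widetilde{\Lc}(x^K, y^\star) \geq \inf_x\widetilde{\Lc}(x, y^\star) = -G(y^\star) = F^\star$, and take inf over $\norms{x} \leq M_{f^\ast}$, which contains $\dom{f}$ when $f^\ast$ is $M_{f^\ast}$-Lipschitz, so that $-\inf_x\widetilde{\Lc}(x, \bar{y}^K) = G(\bar{y}^K)$ and $\norms{x^0 - x}^2$ is bounded by $D_{f^\ast}^2$. I expect the main technical obstacle to be the first step — verifying that the specific schedule \eqref{eq:update_rule1} exactly annihilates the two bracketed coefficients in \eqref{eq:key_bound1}; once that algebraic cancellation and the weight identity $w_{k+1}(1-\tau_k) = w_k$ are in hand, the telescoping is loss-free and the rest is bookkeeping.
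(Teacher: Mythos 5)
Your proposal is correct and follows essentially the same route as the paper: it verifies the same parameter identities that annihilate the residual terms in Lemma~\ref{le:key_bound1}, telescopes the resulting recursion (your weighted sum with $w_{k+1}=k+1$ is just the paper's product/Lyapunov induction in additive form) to reach the bound on $\Lc_{\rho_{k-1}}(x^k,r^k,y)-\Lc(x,r,\bar y^k)$, and then converts to $\widetilde{\Lc}$ and to (a)--(c) via the same conjugate-duality facts. The only cosmetic differences are that you take $\inf_r$ and sup/inf over the balls $\norms{y}\le M_g$, $\norms{x}\le M_{f^\ast}$ where the paper instead picks explicit witnesses $\bar r^k\in\partial g^\ast(\bar y^k)$, $\breve y^k$, and $\breve x^k\in\partial f^\ast(-K^\top\bar y^k)$; these are equivalent at the paper's level of rigor.
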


\begin{remark}[\textit{Optimal rate}]\label{rmk:optimal_rate}
It was shown in \cite{li2016accelerated,woodworth2016tight} that, under Assumption~\ref{as:A0}, the rate $\BigO{1/k}$ is \textbf{optimal}, in the sense that for any algorithm $\mathcal{A}$ for solving \eqref{eq:com_cvx}, in order to achieve the bound $F(x^k) - F^\star \leq \varepsilon$, there exists an instance of $f$ and $g$ with their arguments' dimensions $p$ and $n$ dependent on $\varepsilon$, such that $\mathcal{A}$ makes $\LowerO{1/\varepsilon}$ queries to the first-order oracle of $f$ and $g$ $($e.g., $f(x)$, $\nabla f(x)$, or $\prox_{\rho f}(x)$$)$. 
In other words, the convergence rate of $\mathcal{A}$ cannot exceed $\BigO{1/k}$ rate under Assumption~\ref{as:A0} when the problem dimension $p$ is much larger  than the number of iterations $k$, i.e., $k\leq \BigO{p}$.
Consequently, Algorithm~\ref{alg:A1} indeed achieves \textbf{optimal} convergence rate.
\end{remark}

\begin{remark}[\textit{Symmetry}]
Since the primal-dual problems \eqref{eq:com_cvx} and \eqref{eq:dual_prob} are symmetric, to obtain a non-ergodic convergence rate on the dual problem \eqref{eq:dual_prob}, we could apply Algorithm~\ref{alg:A1} to the dual-primal pair instead of the primal-dual pair.
\end{remark}

If we choose $c > 1$, then Algorithm \ref{alg:A1} still converges. 
In fact, it  achieves the same $\BigO{1/k}$ and a potentially faster\footnote{In fact, our numerical experiments in Section \ref{sec:num_experiments} show  significantly faster empirical convergence rates of Algorithm~\ref{alg:A1} when we use the parameter update rules \eqref{eq:update_rule1} with $c > 1$.} $\underline{o}\big(1/(k\sqrt{\log{k}})\big)$ convergence rate on the primal objective residual, as shown in Theorem~\ref{th:faster_convergence_rate}, whose proof is given in Appendix~\ref{apdx:th:faster_convergence_rate}.

\begin{theorem}[$\BigO{1/k}$ and  $\underline{o}\big(1/(k\sqrt{\log{k}})\big)$ convergence rates when $c > 1$]\label{th:faster_convergence_rate}
Let $\sets{x^k}$ be  generated by Algorithm~\ref{alg:A1} with $c > 1$. 
Let $\widetilde{\Lc}$ be defined by \eqref{eq:min_max} and $y^\star$ be an optimal solution of \eqref{eq:dual_prob}. 
Then, under Assumption~\ref{as:A0}, for any $k\geq 0$, we have
\myeq{eq:primal_dual_bound1_new}{
	0 \leq \widetilde{\Lc}(x^k, y^\star) - F^\star \leq \frac{R_0^2}{k + c - 1} ~~~\text{and}~~~ \liminf_{k \to \infty}\, k\log{(k)}\big[\widetilde{\Lc}(x^k, y^{\star}) - F^{\star}\big] = 0,
}
where $R_0^2 := (c - 1)\left[F(x^0) - F^\star\right] + \frac{c}2 \left[\frac{\rho_0 \norms{K}^2}\gamma \norms{x^0 - x^\star}^2 + \frac1{(1 - \gamma)\rho_0} \norms{y^0 - y^\star}^2\right]$. 

Moreover, if $g$ is $M_g$-Lipschitz continuous on $\dom{g}$, then the primal last-iterate sequence $\sets{x^k}$ satisfies the following statements for all $k \geq 0$:
\myeq{eq:primal_bound1_new2}{
	0 \leq F(x^k) - F^{\star} \leq \frac{R_1^2}{k + c - 1}\quad \text{and}\quad  \displaystyle\liminf_{k \to \infty} \, k\sqrt{\log{k}}\big[F(x^k) - F^\star \big] = 0,
}
where $R_1^2 := R_0^2 + \sqrt{2c/\rho_0}(\norms{y^\star} + M_g)R_0$. 
Hence, $\sets{F(x^k)}$ converges to the primal optimal value $F^{\star}$ of \eqref{eq:com_cvx} at both $\BigO{1/k}$ and $\underline{o}\big(1/(k\sqrt{\log k})\big)$ convergence rates in non-ergodic sense, where $\underline{o}(\cdot)$ is defined in \eqref{eq:underline_o_def}.
\end{theorem}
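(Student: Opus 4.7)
The plan is to refine the telescoping argument that underlies Theorem~\ref{th:convergence_guarantee1} by exploiting a surplus that appears only when $c > 1$. Fix a saddle point $(x^{\star}, y^{\star})$ and define $E_k := \Lc_{\rho_{k-1}}(x^k, r^k, y^{\star}) - \Lc(x^{\star}, Kx^{\star}, \bar{y}^k)$ and $v_k := \norms{Kx^k - r^k}$, with the convention $r^0 = Kx^0$ so that $E_0 = F(x^0) - F^{\star}$. Fenchel--Young yields $E_k \geq \widetilde{\Lc}(x^k, y^{\star}) - F^{\star} + \tfrac{\rho_{k-1}}{2}v_k^2 \geq 0$. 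Plugging the update rule \eqref{eq:update_rule1} into Lemma~\ref{le:key_bound1} shows $\tfrac{1}{\beta_k} - \tfrac{\rho_k^2 \norms{K}^2}{\rho_k - \eta_k} = 0$ and, by a short direct computation,
\myeqn{
\rho_{k-1} - (1-\tau_k)\rho_k \;=\; \frac{\rho_0(c-1)}{c},
}
which is strictly positive precisely because $c > 1$. Hence \eqref{eq:key_bound1} simplifies to $E_{k+1} \leq (1-\tau_k)E_k + (\text{telescopable quadratic terms}) - \tfrac{\rho_0(c-1)(1-\tau_k)}{2c}\,v_k^2$.

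Next, multiply this recursion by the weight $\omega_k := k + c$. Three coincidences happen at once: $\omega_k \tau_k^2/\beta_k = c\rho_0\norms{K}^2/\gamma$ and $\omega_k/\eta_k = c/((1-\gamma)\rho_0)$, both independent of $k$, so the weighted primal and dual distance differences telescope exactly upon summation; and $\omega_k (1-\tau_k) = k = \omega_{k-1} - (c-1)$, leaving behind an unweighted copy $(c-1)E_k$ after each step. Summing from $k=0$ to $K$ (the $k=0$ term on the left vanishes since $1-\tau_0 = 0$) and adding the trivial bound $(c-1)E_0 = (c-1)[F(x^0) - F^{\star}]$ to both sides yields
\myeqn{
\omega_K\, E_{K+1} \;+\; (c-1)\sum_{k=0}^{K} E_k \;+\; \frac{\rho_0(c-1)}{2c}\sum_{k=1}^{K} k\, v_k^2 \;\leq\; R_0^2.
}

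The $\BigO{1/k}$ estimates follow directly. Dividing by $\omega_K = K + c$ and using $E_{K+1} \geq \widetilde{\Lc}(x^{K+1}, y^{\star}) - F^{\star}$ gives the first bound in \eqref{eq:primal_dual_bound1_new}. For the primal objective, Lipschitz continuity of $g$ gives $F(x^k) - F^{\star} \leq E_k + (\norms{y^{\star}} + M_g)v_k$; combining with $v_k \leq \sqrt{2E_k/\rho_{k-1}}$ (from the Fenchel--Young lower bound on $E_k$) and $\rho_{k-1} = \rho_0(k+c-1)/c$ yields $v_k \leq \sqrt{2c/\rho_0}\,R_0/(k+c-1)$, hence the $R_1^2/(k+c-1)$ bound in \eqref{eq:primal_bound1_new2}.

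The two $\underline{o}$-claims rely on one elementary principle: if $\sum_k a_k$ converges with $a_k \geq 0$, then $\liminf_k k\log(k)\,a_k = 0$, since otherwise $a_k \geq \alpha/(k\log k)$ eventually, contradicting the divergence of $\sum 1/(k\log k)$. Applied to $a_k := E_k$, whose summability is part of the displayed bound, this produces the second claim in \eqref{eq:primal_dual_bound1_new}. For $F(x^k) - F^{\star}$, square the estimate $F(x^k) - F^{\star} \leq E_k + (\norms{y^{\star}} + M_g)v_k$ and use $(a+b)^2 \leq 2a^2 + 2b^2$ together with $k\, E_k^2 \leq R_0^2\, E_k$ (from $E_k \leq R_0^2/(k+c-1)$) and $\sum_k k\, v_k^2 < \infty$ to conclude $\sum_k k\,(F(x^k) - F^{\star})^2 < \infty$. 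The same principle then forces $\liminf_k k^2 \log(k)\,(F(x^k) - F^{\star})^2 = 0$, i.e., $\liminf_k k\sqrt{\log k}\,(F(x^k) - F^{\star}) = 0$, completing \eqref{eq:primal_bound1_new2}. The main obstacle in executing this plan is identifying the weight $\omega_k = k+c$ that simultaneously telescopes the primal/dual distances \emph{and} extracts the extra $(c-1)\sum E_k$ and $\sum k v_k^2$ sums driving the $\underline{o}$-rates; the squaring trick avoids the need for a common subsequence on which both $E_k$ and $v_k$ are small.
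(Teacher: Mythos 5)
Your proposal is correct, and its core is the same argument the paper uses: after specializing Lemma~\ref{le:key_bound1} with the update rule \eqref{eq:update_rule1} (your computations $\tfrac1{\beta_k}-\tfrac{\rho_k^2\norms{K}^2}{\rho_k-\eta_k}=0$ and $\rho_{k-1}-(1-\tau_k)\rho_k=\tfrac{\rho_0(c-1)}{c}$ check out), multiplying the recursion by the weight $k+c$ and telescoping is exactly the paper's step \eqref{eq:th2_proof1}--\eqref{eq:key_bound1_pd2d}, only with your augmented gap $E_k$ bundling the paper's $\widetilde{\Gc}_k$ and $\tfrac{\rho_{k-1}}{2}\norms{Kx^k-r^k}^2$ into one quantity; the summability-implies-$\liminf$ principle you invoke is Lemma~\ref{lm:facts}(c). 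The only genuine deviation is the last step for \eqref{eq:primal_bound1_new2}: the paper passes from $\liminf k\log(k)\big[\widetilde{\Gc}_k+\tfrac{\rho_0}{2c}k v_k^2\big]=0$ to $\liminf k\sqrt{\log k}\big[\widetilde{\Gc}_k+(\norms{y^\star}+M_g)v_k\big]=0$ via the two-sequence lemma (Lemma~\ref{lm:facts}(d)(i)), which guarantees a common subsequence on which both terms are small, whereas you square the bound $F(x^k)-F^\star\le E_k+(\norms{y^\star}+M_g)v_k$, use $kE_k^2\le R_0^2E_k$ and $\sum_k k v_k^2<\infty$ to get $\sum_k k\,(F(x^k)-F^\star)^2<\infty$, and apply the single-sequence principle once. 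Both routes are valid; yours is marginally more self-contained (it needs only part (c) of the auxiliary lemma, not part (d)), at the cost of the extra squaring step, and it yields the same constants $R_0^2$ and $R_1^2$ as the paper.
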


\begin{remark}\label{re:comparison1}
The $\underline{o}\big(1/(k\sqrt{\log{k}})\big)$ rate does not contradict our discussion in Remark \ref{rmk:optimal_rate}, since our problem dimensions $p$ and $n$ are fixed, while $k$ can be sufficiently large. 
\end{remark}

\subsection{Application to constrained problems}\label{subsec:constr_prob}
Let us apply Algorithm~\ref{alg:A1} to solve the following nonsmooth constrained convex optimization problem:
\myeq{eq:constr_cvx}{
	F^{\star} := \min_{x\in\R^p}\Big\{ F(x) := f(x) + \psi(x) \quad \text{s.t.} \quad Kx = b \Big\},
}
where $f$ and $K$ are defined as in \eqref{eq:com_cvx}, $\psi$ is proper, closed, and convex, and $b\in\R^n$. 
This problem is a special case of \eqref{eq:com_cvx} where $f$ is replaced by $f + \psi$, and $g(u) := \delta_{\set{b}}(u)$, the indicator of $\set{b}$. 
In this case, $g^{\ast} (y) = \iprods{b, y}$, and the last condition of Assumption~\ref{as:A0} reduces to the Slater condition: $\ri{\dom{f}\cap\dom{\psi}}\cap\set{ x \mid Kx = b}\neq\emptyset$. 
In addition, we require the following assumption on the new objective term $\psi$:

\begin{assumption}\label{as:A1psi}
	The function $\psi$ in \eqref{eq:constr_cvx} is convex and $L_\psi$-smooth.
\end{assumption}

\noindent We specify Algorithm~\ref{alg:A1} to solve \eqref{eq:constr_cvx} and its dual problem as follows:
\myeq{eq:constr_alg1}{
\arraycolsep=0.3em
\left\{\begin{array}{lcl}
		y^{k + 1}	& :=	& \tilde{y}^k + \rho_k (K\hat{x}^k - b),\vspace{1ex}\\
		x^{k + 1} 	& :=	& \prox_{\beta_kf}\left(\hat{x}^k - \beta_k \left[K^{\top}y^{k + 1} + \nabla{\psi}(\hat{x}^k)\right]\right),\vspace{1ex}\\
		\hat{x}^{k + 1}	& :=	& x^{k + 1} + \frac{\tau_{k + 1} (1 - \tau_k)}{\tau_k} (x^{k + 1} - x^k),\vspace{1ex}\\
		\tilde{y}^{k + 1}	& :=	& \tilde{y}^k + \eta_k \left[K\left(x^{k + 1} - (1 - \tau_k)x^k\right) - \tau_k b\right],\vspace{1ex}\\
		\bar{y}^{k + 1}	& :=	& (1 - \tau_k)\bar{y}^k + \tau_k y^{k + 1},
	\end{array}\right.
}
where all the parameters are updated as in Algorithm~\ref{alg:A1} with a small modification $\beta_k := \gamma/(\norms{K}^2 \rho_k + \gamma L_\psi)$. 
Its convergence guarantee is summarized in the following corollary, whose proof is given in Appendix \ref{apdx:th:constr_alg_convergence}.

\begin{corollary}\label{th:constr_alg_convergence}
Let $\set{(x^k, \bar{y}^k)}$ be generated by scheme \eqref{eq:constr_alg1} to solve \eqref{eq:constr_cvx} and its dual problem under Assumptions \ref{as:A0} and \ref{as:A1psi}. 
Let $(x^\star, y^\star)$ be a pair of primal-dual optimal solution of \eqref{eq:constr_cvx}.
If we choose $c := 1$, then,  for all $k\geq 1$, we have  the following primal convergence rate guarantee:
\myeq{eq:constr_alg_convergence1}{
	\vert F(x^k) - F^{\star} \vert \leq \dfrac{R_0^2}{2k}\quad \text{and}\quad \norms{Kx^k - b} \leq \dfrac{R_0^2}{2k},
}
where $R_0^2 := \frac{\rho_0 \norms{K}^2 + \gamma L_\psi}{\gamma}\norms{x^0 - x^\star}^2 + \frac{1}{(1 - \gamma)\rho_0}(2\norms{y^\star} + \norms{y^0} + 1)^2$.
Hence, the objective residual and the feasibility violation both converge to zero at $\BigO{1/k}$ non-ergodic rate.

If, in addition, $\dom{F}$ is bounded, then we have the dual convergence guarantee:
\myeq{eq:constr_alg_convergence1_dual}{
	G(\bar{y}^k) - G^{\star} \leq \dfrac{1}{2k}\left[\frac{(\rho_0\norms{K}^2 + \gamma L_{\psi})\Dc_{F}^2}{\gamma} + \frac{\norms{y^0 - y^{\star}}^2}{(1-\gamma)\rho_0}\right],
}
where $\Dc_F := \sup\set{ \norms{x - x^0} \mid x \in\dom{F}} < +\infty$. 

If we choose $c > 1$ in the variant of Algorithm~\ref{alg:A1} for solving \eqref{eq:constr_cvx}, then the $\BigO{1/k}$ non-ergodic rate bounds on $\vert F(x^k) - F^{\star} \vert$ and $\norms{Kx^k - b}$ still hold, and
\myeq{eq:constr_alg_convergence_faster1}{
	\liminf_{k\to\infty} \, k\sqrt{\log{k}}\left[ \vert F(x^k) - F^{\star} \vert   + \norms{Kx^k - b} \right] = 0.
}
Hence, the objective residual and  feasibility violation sequences both converge to zero at  both $\BigO{1/k}$ and $\underline{o}\big(1/(k\sqrt{\log k})\big)$ non-ergodic convergence rates.
\end{corollary}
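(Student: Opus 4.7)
The plan is to first realize \eqref{eq:constr_cvx} as an instance of the template \eqref{eq:com_cvx} with $f\leftarrow f+\psi$ and $g \leftarrow \delta_{\set{b}}$. Since $g^{\ast}(y) = \iprods{b, y}$, the dual prox step in Algorithm~\ref{alg:A1} collapses to the explicit update $y^{k+1} = \tilde{y}^k + \rho_k(K\hat{x}^k - b)$, and the primal prox step splits $\psi$ off $f$ by linearizing $\psi$ at $\hat{x}^k$ and using its $L_\psi$-descent bound as a quadratic majorant; choosing $\beta_k = \gamma/(\norms{K}^2\rho_k + \gamma L_\psi)$ exactly absorbs the extra $L_\psi$ and recovers scheme \eqref{eq:constr_alg1}. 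The same modification replaces the term $1/\beta_k$ in Lemma~\ref{le:key_bound1} by $1/\beta_k - L_\psi$, while leaving the rest of the one-iteration descent intact. Hence the bound \eqref{eq:convergence_bound1} of Theorem~\ref{th:convergence_guarantee1} (and its analogue from Theorem~\ref{th:faster_convergence_rate}) holds for \eqref{eq:constr_alg1} with $\rho_0\norms{K}^2$ replaced by $\rho_0\norms{K}^2 + \gamma L_\psi$.

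With this in hand, I would plug $x = x^{\star}$ and a free $y = \hat{y}$ into the specialized \eqref{eq:convergence_bound1}. Since $Kx^\star = b$, we have $\widetilde{\Lc}(x^\star, \bar{y}^k) = F^\star$, so the left-hand side reduces to $F(x^k) - F^\star + \iprods{Kx^k - b, \hat{y}}$. The standard dual-lifting trick then extracts both the objective and feasibility guarantees: when $Kx^k \neq b$, take $\hat{y} := y^\star + (\norms{y^\star} + 1)(Kx^k - b)/\norms{Kx^k - b}$, which satisfies $\norms{y^0 - \hat{y}} \leq \norms{y^0} + 2\norms{y^\star} + 1$ and hence makes the right-hand side at most $R_0^2/(2k)$. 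Combining with the Lagrangian inequality $F(x^k) + \iprods{Kx^k - b, y^\star} \geq F^\star$ immediately yields $\norms{Kx^k - b} \leq R_0^2/(2k)$, and rearranging the resulting inequality, using the same saddle bound on the other side, delivers the two-sided estimate $\vert F(x^k) - F^\star \vert \leq R_0^2/(2k)$. For the dual estimate \eqref{eq:constr_alg_convergence1_dual}, I set $y = y^\star$ in \eqref{eq:convergence_bound1}, take the supremum over $x \in \dom{F}$ on the right (using $\norms{x^0 - x} \leq \Dc_F$), and exploit $G(\bar{y}^k) = -\inf_{x\in\dom{F}}\widetilde{\Lc}(x, \bar{y}^k)$ together with $\widetilde{\Lc}(x^k, y^\star) \geq F^\star = -G^\star$.

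For $c > 1$, the $\BigO{1/k}$ bounds follow identically using Theorem~\ref{th:faster_convergence_rate}, whose primal-dual estimate \eqref{eq:primal_dual_bound1_new} I would first extend from the fixed test point $y^\star$ to an arbitrary $y$ by tracing through its proof (the same potential-function argument delivers $\widetilde{\Lc}(x^k, y) - F^\star \leq R_0^2(y)/(k+c-1)$ for any $y$, with $R_0^2(y)$ depending on $\norms{y^0 - y}^2$). Dual-lifting with the same $\hat{y}_k$ as above, whose distance from $y^0$ is uniformly bounded, transfers the $\BigO{1/k}$ rate to $\vert F(x^k) - F^\star\vert + \norms{Kx^k - b}$. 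The main obstacle I anticipate is the subsequential rate \eqref{eq:constr_alg_convergence_faster1}: one must verify that the $\liminf k\log(k)\big[\widetilde{\Lc}(x^k, y) - F^\star\big] = 0$ part of Theorem~\ref{th:faster_convergence_rate} persists when the dual test point $y = \hat{y}_k$ varies with $k$ within a bounded set. I expect this to follow by inspecting the potential-function machinery of Theorem~\ref{th:faster_convergence_rate} and noting that its summability argument is uniform in $y$ over compact subsets of $\R^n$; combining the resulting subsequential decay with the already proved $\BigO{1/k}$ feasibility bound then yields the joint $\SmallOu{1/(k\sqrt{\log k})}$ rate.
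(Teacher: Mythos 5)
Your $c=1$ argument is essentially the paper's proof: the reduction $f\leftarrow f+\psi$, $g\leftarrow\delta_{\{b\}}$ with $\psi$ linearized so that the one-iteration bound of Lemma~\ref{le:key_bound1} holds with $1/\beta_k$ replaced by $1/\beta_k-L_\psi$ (absorbed by $\beta_k=\gamma/(\norms{K}^2\rho_k+\gamma L_\psi)$), followed by the dual-lifting step with $\norms{\hat y}\le 2\norms{y^\star}+1$ and the saddle-point inequality $F(x^k)-F^\star\ge-\norms{y^\star}\norms{Kx^k-b}$, and the bounded-domain argument with $\breve{x}^k\in\partial F^{\ast}(-K^{\top}\bar y^k)$ for \eqref{eq:constr_alg_convergence1_dual}. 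All of this matches the paper's Appendix~\ref{apdx:th:constr_alg_convergence}.

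The gap is in your treatment of \eqref{eq:constr_alg_convergence_faster1} for $c>1$. Your plan is to let the dual test point $\hat y_k$ vary with $k$ and to argue that the liminf statement of Theorem~\ref{th:faster_convergence_rate} ``persists'' by uniformity of the summability bound in $y$ over compact sets; this does not work as stated. For each fixed $y$ the summability argument produces a minimizing subsequence that depends on $y$, so uniform boundedness of the constants gives you no liminf along a $k$-dependent sequence $\hat y_k$; concretely, $\widetilde{\Gc}_k(\hat y_k)=\widetilde{\Gc}_k(y^\star)+(\norms{y^\star}+1)\norms{Kx^k-b}$, and what is summable in the proof of Theorem~\ref{th:faster_convergence_rate} is (up to weights) $\widetilde{\Gc}_i(y^\star)+\frac{i}{c}\norms{Kx^i-b}^2$, not $\norms{Kx^i-b}$ itself, so you cannot apply Lemma~\ref{lm:facts}(c) to $u_k:=\widetilde{\Gc}_k(\hat y_k)$. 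Your fallback, ``combine the subsequential decay with the already proved $\BigO{1/k}$ feasibility bound,'' also fails: $k\sqrt{\log k}\cdot\BigO{1/k}=\BigO{\sqrt{\log k}}$ diverges, so an $\BigO{1/k}$ bound on $\norms{Kx^k-b}$ cannot contribute to a vanishing liminf of $k\sqrt{\log k}\left[\vert F(x^k)-F^\star\vert+\norms{Kx^k-b}\right]$.

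The missing ingredient is the paper's mechanism: keep the fixed multiplier $y^\star$ and use the fact that the feasibility violation already sits inside the Lyapunov decrease through the term $k a_k^2$ with $a_k^2:=\frac{\rho_0}{2}\norms{Kx^k-b}^2$, which yields $\liminf_{k\to\infty}k\log(k)\big[\widetilde{\Gc}_k+\frac{k}{c}a_k^2\big]=0$ as in \eqref{eq:th2_proof_liminf}. Lemma~\ref{lm:facts}(d)(i) then converts this into $\liminf_{k\to\infty}k\sqrt{\log k}\big[\widetilde{\Gc}_k+(\norms{y^\star}+1)\norms{Kx^k-b}\big]=0$, and the elementary bound $\vert F(x^k)-F^\star\vert+\norms{Kx^k-b}\le\widetilde{\Gc}_k+(\norms{y^\star}+1)\norms{Kx^k-b}$ finishes the proof. (The same Lyapunov term, via \eqref{eq:thm_proof_Kx-r}, also gives the $\BigO{1/k}$ feasibility bound for $c>1$ without extending the arbitrary-$y$ estimate of Theorem~\ref{th:faster_convergence_rate}, whose telescoping uses $\widetilde{\Gc}_k\ge0$ and hence does not transfer verbatim to arbitrary $y$.)
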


\begin{remark}\label{re:comparison3}
The $\BigO{1/k}$ rate results of Corollary \ref{th:constr_alg_convergence} are similar to  \cite{tran2017proximal,TranDinh2012a}. 
However, \cite{tran2017proximal} studied only primal methods for constrained convex problems using quadratic penalty framework and alternating minimization techniques without updating dual variables, and thus does not have convergence guarantee on the dual problem. 
The other work \cite{TranDinh2012a} relies on a different approach called smoothing techniques and excessive gap framework introduced in \cite{Nesterov2005d}. 
\end{remark}

\begin{remark}\label{re:general_constraints}
	We can extend the scheme \eqref{eq:constr_alg1} to solve  \eqref{eq:constr_cvx} with  general linear constraint $Kx - b \in \Sc$ instead of $Kx - b = 0$, where  $\Sc$ is  a nonempty, closed, and convex set in $\R^n$.
This constraint covers linear inequality constraints as special cases.
One simple trick is to introduce a slack variable $s$ and reformulating this constraint into $Kx - s = b$ and $s\in\Sc$.
Next, we replace the objective function $F(x) = f(x) + \psi(x)$ in \eqref{eq:constr_cvx} by $F(x,s) := f(x) + \psi(x) + \delta_{\Sc}(s)$, where $\delta_{\Sc}(s)$ is the indicator of $\Sc$.
Then, we can apply \eqref{eq:constr_alg1} to solve the resulting problem in $x$ and $s$.
In this case, our new scheme requires projection onto $\Sc$.
As another option, we can adopt the approach in \cite{tran2017proximal} to handle $Kx - b \in \Sc$ directly without reformulation.
We omit this extension to avoid overloading the paper.
\end{remark}

\beforesec
\section{A New Primal-Dual Method for Strongly Convex Case}\label{sec:strong_convexity}
\aftersec
In this section, we consider a special case of problem \eqref{eq:com_cvx}, where $f$ is strongly convex. 
More precisely, we impose the following assumption. 

\begin{assumption}\label{as:A2}
The function $f$ in \eqref{eq:com_cvx} is strongly convex with a strong convexity parameter $\mu_f > 0$, but not necessarily smooth.
\end{assumption}

\beforesubsec
\subsection{\bf Algorithm derivation and one-iteration analysis}
\aftersubsec
We follow the same diagram as in Section~\ref{sec:new_pd_algs}, but replacing Nesterov's accelerated step \cite{Nesterov1983} by Tseng's scheme \cite{tseng2008accelerated}, which allows us  to achieve a $\BigO{1/k^2}$  non-ergodic convergence rate. 
With this modification, we now describe our primal-dual scheme for solving \eqref{eq:com_cvx}-\eqref{eq:dual_prob}:
\myeq{eq:pd_scheme3}{
\arraycolsep=0.27em
\left\{\begin{array}{lcl}
	\hat{x}^k & := & (1 - \tau_k)x^k + \tau_k \tilde{x}^k,\vspace{1ex}\\
	r^{k + 1} & := & \prox_{g/\rho_k} (\tilde{y}^k/\rho_k + K\hat{x}^k),\vspace{1.25ex}\\		
	\tilde{x}^{k + 1} & := & \prox_{(\beta_k/\tau_k)f} \big(\tilde{x}^k - \frac{\beta_k}{\tau_k} \nabla_x \phi_{\rho_k} (\hat{x}^k, r^{k + 1}, \tilde{y}^k)\big),\vspace{1.25ex}\\
	x^{k + 1} & := & \prox_{f/(\rho_k \norms{K}^2)} \big(\hat{x}^k - \tfrac1{\rho_k\norms{K}^2} \nabla_x \phi_{\rho_k} (\hat{x}^k, r^{k + 1}, \tilde{y}^k)\big),\vspace{1.25ex}\\
	\tilde{y}^{k + 1} & := & \tilde{y}^k + \eta_k \big[Kx^{k + 1} - r^{k + 1} - (1 - \tau_k)(Kx^k - r^k)\big].
\end{array}\right.
}
The parameters $\tau_k$,\ $\rho_k$,\ $\beta_k$, and $\eta_k$ will be specified later based on our analysis.

We first analyze one iteration of the primal-dual scheme \eqref{eq:pd_scheme3} in the following lemma to obtain a recursive estimate. Its proof can be found in Appendix \ref{apdx:le:key_est_scvx2}.

\begin{lemma}\label{le:key_est_scvx2}
Let $(x^k, \hat{x}^k, \tilde{x}^k, r^k, \tilde{y}^k)$ be generated by \eqref{eq:pd_scheme3}, and $\bar{y}^{k + 1} := (1 - \tau_k)\bar{y}^k + \tau_k \left[\tilde{y}^k + \rho_k (K\hat{x}^k - r^{k + 1})\right]$. 
Assume that  $\rho_k > \eta_k$ and $\rho_k\beta_k\norms{K}^2 < 1$.
Then, for any $(x, r, y) \in \R^p \times \R^n \times \R^n$, it holds that
\myeq{eq:key_est_scvx2}{
\hspace{-2ex}
\arraycolsep=0.2em
\begin{array}{rl}
	\big[\Lc_{\rho_k} {\!\!\!}& (x^{k + 1}, r^{k + 1}, y) - \Lc(x, r, \bar{y}^{k + 1}) \big] \leq (1 - \tau_k)\big[\Lc_{\rho_{k - 1}}(x^k, r^k, y) - \Lc(x, r, \bar{y}^k)\big]\vspace{1.2ex}\\
	& + {~} \frac{\tau_k^2}{2\beta_k} \norms{\tilde{x}^k - x}^2 - \frac{\tau_k (\tau_k + \beta_k \mu_f)}{2\beta_k} \norms{\tilde{x}^{k + 1} - x}^2 
	+ \frac1{2\eta_k} \left[ \norms{\tilde{y}^k - y}^2 - \norms{\tilde{y}^{k + 1} - y}^2\right]\vspace{1.2ex}\\
	& - {~} \frac{\rho_k}2 \Big(1 - \rho_k \beta_k \norms{K}^2 - \frac{\eta_k}{\rho_k - \eta_k}\Big)\norms{K(x^{k + 1} - \hat{x}^k)}^2 \vspace{1.2ex}\\
	& - {~} \frac{(1 - \tau_k)}{2}\left[\rho_{k - 1} - (1 - \tau_k)\rho_k\right] \norms{Kx^k - r^k}^2.
\end{array}
\hspace{-2ex}
}
\end{lemma}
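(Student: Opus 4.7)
The plan is to mimic the proof of Lemma~\ref{le:key_bound1} (the non-strongly-convex sibling, whose proof is in Appendix~\ref{apdx:le:key_bound1}), with two essential modifications: the primal update is now Tseng's prox-gradient step rather than Nesterov's convex combination, and the $\mu_f$-strong convexity of $f$ is propagated through the primal prox inequality. I envisage four logical stages.

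First, I collect the subproblem optimality inequalities. Setting $y^{k+1} := \tilde{y}^k + \rho_k(K\hat{x}^k - r^{k+1})$, Moreau's identity applied to the $r^{k+1}$-update gives $y^{k+1} \in \partial g(r^{k+1})$, hence $g(r) \geq g(r^{k+1}) + \iprods{y^{k+1}, r - r^{k+1}}$ for every $r$, consistent with $\bar{y}^{k+1} = (1-\tau_k)\bar{y}^k + \tau_k y^{k+1}$. For $\tilde{x}^{k+1}$, the $\mu_f$-strongly convex three-point prox inequality, after multiplication by $\tau_k$, produces the two quadratic terms $\tfrac{\tau_k^2}{2\beta_k}\norms{\tilde{x}^k - x}^2$ and $-\tfrac{\tau_k(\tau_k + \beta_k\mu_f)}{2\beta_k}\norms{\tilde{x}^{k+1} - x}^2$ appearing in \eqref{eq:key_est_scvx2}, together with a cross term $\tau_k\iprods{\nabla_x\phi_{\rho_k}(\hat{x}^k, r^{k+1}, \tilde{y}^k), \tilde{x}^{k+1} - x}$ and a leftover $-\tfrac{\tau_k^2}{2\beta_k}\norms{\tilde{x}^{k+1} - \tilde{x}^k}^2$ to be consumed in the next stage.

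Next, I bridge $x^{k+1}$ and $\tilde{x}^{k+1}$ via the quadratic structure of $\phi_{\rho_k}$. Since $\phi_{\rho_k}(\cdot, r^{k+1}, y)$ is a quadratic with Hessian $\rho_k K^{\top}K$, I use the exact identity $\phi_{\rho_k}(x^{k+1}, r^{k+1}, y) = \phi_{\rho_k}(\hat{x}^k, r^{k+1}, y) + \iprods{\nabla_x\phi_{\rho_k}(\hat{x}^k, r^{k+1}, y), x^{k+1} - \hat{x}^k} + \tfrac{\rho_k}{2}\norms{K(x^{k+1} - \hat{x}^k)}^2$, which produces the $\norms{K(x^{k+1} - \hat{x}^k)}^2$ term of \eqref{eq:key_est_scvx2}. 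The prox inequality satisfied by $x^{k+1}$, tested against the auxiliary point $\bar{x}^{k+1} := (1-\tau_k)x^k + \tau_k\tilde{x}^{k+1}$, combined with the convexity bound $f(\bar{x}^{k+1}) \leq (1-\tau_k)f(x^k) + \tau_k f(\tilde{x}^{k+1})$ and the geometric identity $\bar{x}^{k+1} - \hat{x}^k = \tau_k(\tilde{x}^{k+1} - \tilde{x}^k)$, lets me replace $f(x^{k+1})$ by $(1-\tau_k)f(x^k) + \tau_k f(\tilde{x}^{k+1})$ up to a residual proportional to $\rho_k\norms{K}^2\tau_k^2\norms{\tilde{x}^{k+1} - \tilde{x}^k}^2$, which the hypothesis $\rho_k\beta_k\norms{K}^2 < 1$ makes dominated by the leftover from the previous stage. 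This is the step I expect to be the main obstacle, because unlike in Lemma~\ref{le:key_bound1} where $x^{k+1} = (1-\tau_k)x^k + \tau_k\tilde{x}^{k+1}$ holds algebraically, here $x^{k+1}$ is generated by a separate prox step and the link to $\tilde{x}^{k+1}$ must be made through the prox inequality.

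Finally, I perform dual telescoping and assembly. Splitting $\Lc(x, r, \bar{y}^{k+1})$ linearly in $\bar{y}^{k+1}$, inserting the subgradient inequality for $g$ from the first stage (applied once with $r$ and once with $r^k$ in the weights $\tau_k$ and $1-\tau_k$), and expanding $\tfrac{\rho_k}{2}\norms{Kx^{k+1} - r^{k+1}}^2$ around $K\hat{x}^k - r^{k+1}$ isolates the dual piece. The dual update $\tilde{y}^{k+1} = \tilde{y}^k + \eta_k[Kx^{k+1} - r^{k+1} - (1-\tau_k)(Kx^k - r^k)]$ combined with the three-point identity $2\iprods{a - b, a - c} = \norms{a - b}^2 - \norms{b - c}^2 + \norms{a - c}^2$ yields the telescoping $\tfrac{1}{2\eta_k}[\norms{\tilde{y}^k - y}^2 - \norms{\tilde{y}^{k+1} - y}^2]$, leaving two cross terms: one pairing $y - \tilde{y}^k$ with $\rho_k(K\hat{x}^k - r^{k+1})$ and one pairing $K\hat{x}^k - r^{k+1}$ with $K(x^{k+1} - \hat{x}^k)$. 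Young's inequality with weights tuned by $\rho_k > \eta_k$ absorbs these cross terms into the two negative quadratics, giving the coefficient $-\tfrac{\rho_k}{2}(1 - \rho_k\beta_k\norms{K}^2 - \tfrac{\eta_k}{\rho_k - \eta_k})$ on $\norms{K(x^{k+1} - \hat{x}^k)}^2$; the coefficient $-\tfrac{(1-\tau_k)}{2}[\rho_{k-1} - (1-\tau_k)\rho_k]$ on $\norms{Kx^k - r^k}^2$ arises by comparing the $\tfrac{\rho_{k-1}}{2}\norms{Kx^k - r^k}^2$ hidden inside $(1-\tau_k)\Lc_{\rho_{k-1}}(x^k, r^k, y)$ on the right-hand side with the $(1-\tau_k)\tfrac{\rho_k}{2}\norms{K\hat{x}^k - r^{k+1}}^2$ remainder left over from the quadratic expansion.
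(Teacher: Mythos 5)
Your route matches the paper's in outline: the strongly convex three-point inequality for the $\tilde{x}^{k+1}$-prox, the auxiliary point $\breve{x}^{k+1}:=(1-\tau_k)x^k+\tau_k\tilde{x}^{k+1}$ with $\breve{x}^{k+1}-\hat{x}^k=\tau_k(\tilde{x}^{k+1}-\tilde{x}^k)$, the prox inequality of the extra $x^{k+1}$-step tested at $\breve{x}^{k+1}$, and the same dual telescoping/absorption as in Lemma~\ref{le:key_bound1}; you also correctly single out the $x^{k+1}$--$\tilde{x}^{k+1}$ link as the crux. However, your bookkeeping of the quadratics has a genuine gap: as described, it cannot produce the contribution $-\tfrac{\rho_k}{2}\big(1-\rho_k\beta_k\norms{K}^2\big)\norms{K(x^{k+1}-\hat{x}^k)}^2$ that \eqref{eq:key_est_scvx2} requires. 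The term $+\tfrac{\rho_k}{2}\norms{K(x^{k+1}-\hat{x}^k)}^2$ coming from the quadratic expansion of $\phi_{\rho_k}$ is \emph{positive}; it is not ``the $\norms{K(x^{k+1}-\hat{x}^k)}^2$ term of \eqref{eq:key_est_scvx2}'' (whose coefficient is exactly zero under the updates \eqref{eq:update_rule_scvx1}), and in the paper it is paid for by the quadratic $\tfrac{\rho_k\norms{K}^2}{2}\norms{x^{k+1}-\hat{x}^k}^2$ on the left of the $x^{k+1}$-prox inequality \eqref{eq:lm_semistr_f_opt} (this is precisely why the step size is $1/(\rho_k\norms{K}^2)$). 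Moreover, in your second stage you declare the residual $\tfrac{\rho_k\norms{K}^2\tau_k^2}{2}\norms{\tilde{x}^{k+1}-\tilde{x}^k}^2$ ``dominated'' by the leftover $-\tfrac{\tau_k^2}{2\beta_k}\norms{\tilde{x}^{k+1}-\tilde{x}^k}^2$ and implicitly discard the pair; but their difference, $-\tfrac{1-\rho_k\beta_k\norms{K}^2}{2\beta_k}\norms{\breve{x}^{k+1}-\hat{x}^k}^2$, is exactly what must be kept and then merged, via Lemma~\ref{lm:facts}(b), with the second quadratic $-\tfrac{\rho_k\norms{K}^2+\mu_f}{2}\norms{\breve{x}^{k+1}-x^{k+1}}^2$ produced by testing the $x^{k+1}$-prox at $\breve{x}^{k+1}$ (a term your outline never records), to obtain $-\tfrac{\alpha_1\alpha_2}{\alpha_1+\alpha_2}\norms{x^{k+1}-\hat{x}^k}^2\le-\tfrac{\rho_k}{2}(1-\rho_k\beta_k\norms{K}^2)\norms{K(x^{k+1}-\hat{x}^k)}^2$ as in \eqref{eq:proof_semistr_T5}. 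This merge is indispensable because here $x^{k+1}\neq(1-\tau_k)x^k+\tau_k\tilde{x}^{k+1}$, so $\norms{\tilde{x}^{k+1}-\tilde{x}^k}$ alone does not control $x^{k+1}-\hat{x}^k$; you need both distances from $\breve{x}^{k+1}$.

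Consequently your final stage cannot deliver the claimed coefficient: absorbing the dual cross terms into the negative quadratics left by the expansion of $\phi_{\rho_k}$ (using $\rho_k>\eta_k$) only yields the \emph{positive} remainder $+\tfrac{\rho_k\eta_k}{2(\rho_k-\eta_k)}\norms{K(x^{k+1}-\hat{x}^k)}^2$; no $\beta_k$ enters the dual side, so the factor $1-\rho_k\beta_k\norms{K}^2$ cannot arise there, and having dropped the $\beta_k$-dependent leftover earlier you are left with an uncancelled positive multiple of $\norms{K(x^{k+1}-\hat{x}^k)}^2$. Since with \eqref{eq:update_rule_scvx1} one has $1-\rho_k\beta_k\norms{K}^2-\tfrac{\eta_k}{\rho_k-\eta_k}=0$, there is no slack to absorb it, and both the lemma and the downstream Theorem~\ref{th:convergence_guarantee_scvx1} would fail. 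The repair is the paper's chain: keep the left-hand quadratic of the $x^{k+1}$-prox to cancel the $\phi$-expansion term, retain the net leftover in $\norms{\breve{x}^{k+1}-\hat{x}^k}^2$ together with the three-point term in $\norms{\breve{x}^{k+1}-x^{k+1}}^2$, combine them with Lemma~\ref{lm:facts}(b), and only then perform the dual absorption.
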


%
\beforesubsec
\subsection{\bf Parameter update and complete algorithm}\label{subsec:alg2_full}
\aftersubsec
As before, we can eliminate $r^k$ and $r^{k+1}$ in \eqref{eq:pd_scheme3} following the same lines as in Subsection \ref{subsec:alg1}, in order to transform \eqref{eq:pd_scheme3} into a primal-dual form. 
We also add an averaging sequence $\set{\bar{y}^{k}}$ as in Lemma \ref{le:key_est_scvx2} to obtain a dual convergence rate. 
Furthermore, as guided by Lemma \ref{le:key_est_scvx2}, we propose the following parameter update rules:
\myeq{eq:update_rule_scvx1}{
	\rho_k := \frac{\rho_0}{\tau_k^2}, \qquad \beta_k := \frac\Gamma{\rho_k \norms{K}^2}, \quad \text{and}\quad  \eta_k := (1 - \gamma)\rho_k,
}
where $\gamma \in (\frac12, 1)$ and $\Gamma := 2 - \frac1\gamma \in (0, 1)$ are given. For the choice of $\rho_0$ and the update of $\tau_k$, we provide two cases:
\myeq{eq:update_rule_scvx_init}{
	\textbf{Case 1:} \quad \rho_0 \in \left(0,\ \frac{\Gamma\mu_f}{2\norms{K}^2}\right]~ \text{and}~ \tau_{k + 1} := \frac{\tau_k}2 \left(\sqrt{\tau_k^2 + 4} - \tau_k\right),~ \text{where } \tau_0 := 1,{\!\!\!}
}
or
\myeq{eq:small_o_update_rule_scvx_init}{
	\textbf{Case 2:}~~ \rho_0 \in \left(0,\ \frac{c(c - 1)\Gamma\mu_f}{(2c - 1)\norms{K}^2}\right] \quad \text{and}\quad \tau_k := \frac{c}{k + c},\quad \text{where } c > 2~\text{is given}.
}

Now, we can describe our second first-order primal-dual algorithm as in Algorithm \ref{alg:A2}, and highlight the following features.

\begin{algorithm}[ht!]\caption{(New Primal-Dual Algorithm for \eqref{eq:com_cvx} and \eqref{eq:dual_prob}: Strongly Convex Case)}\label{alg:A2}
\normalsize
\begin{algorithmic}[1]
   \State{\bfseries Initialization:} Choose $y^0\in\R^n$, $x^0 \in\R^p$, and $\gamma \in (\frac12, 1)$. Set $\Gamma := 2 - \frac1\gamma$.
   \vspace{0.5ex}
   \State\label{A2_step:init_opt} Set $\rho_0$ and $\tau_0$ according to \eqref{eq:update_rule_scvx_init} or \eqref{eq:small_o_update_rule_scvx_init}.
   \vspace{0.5ex}
   \State\label{A2_step:dummy} Set $x^{-1} := \hat{x}^0 := x^0$ and $\tilde{y}^{-1} := \tilde{y}^0 := \bar{y}^0 := y^0$.
   \vspace{0.5ex}
   \State{\bfseries For $k := 0, 1, \cdots, k_{\max}$ do}
   		\vspace{0.5ex}
		\State\hspace{3ex}\label{A2_step:i1} Update $\rho_k := \frac{\rho_0}{\tau_k^2}$, ~ $\beta_k := \frac{\Gamma}{\rho_k \norm{K}^2}$, and $\eta_k := (1-\gamma)\rho_k$.
		\vspace{0.5ex}
		\State\hspace{3ex}\label{A2_step:tau} Update $\tau_{k + 1}$ according to \eqref{eq:update_rule_scvx_init} or \eqref{eq:small_o_update_rule_scvx_init}, consistent with the update in Step \ref{A2_step:init_opt}.
    	\vspace{0.5ex}   
   		\State\hspace{3ex}\label{A2_step:i2} Update the primal-dual step:
		\vspace{-1ex}
		\myeqn{
			\arraycolsep=0.27em
			\left\{\begin{array}{lcl}
				y^{k + 1}			& := & \prox_{\rho_k g^{\ast}} (\tilde{y}^k + \rho_k K\hat{x}^k),\vspace{1ex}\\
				\tilde{x}^{k + 1}	& := & \prox_{(\beta_k/\tau_k)f} (\tilde{x}^k - \frac{\beta_k}{\tau_k} K^\top y^{k + 1}),\vspace{1ex}\\
				x^{k + 1}			& := & \prox_{f/(\rho_k \norms{K}^2)}\left(\hat{x}^k - \frac1{\rho_k \norms{K}^2} K^{\top}y^{k + 1}\right),\vspace{1ex}\\
				\hat{x}^{k + 1}		& := & (1 - \tau_{k + 1})x^{k + 1} + \tau_{k + 1}\tilde{x}^{k + 1}.
			\end{array}\right.
			\vspace{-1ex}
		}
    	\State\hspace{3ex}\label{A2_step:i3} Update the intermediate dual step:
   	 	\vspace{-1ex}
		\myeqn{
			\arraycolsep=0.27em
			\begin{array}{lcl}
				\tilde{y}^{k+1} & := & \tilde{y}^k + \eta_kK\big[x^{k+1} - \hat{x}^k - (1-\tau_k)(x^k - \hat{x}^{k-1})\big]\vspace{1ex}\\
				& & + {~} (1 - \gamma)\Big[y^{k + 1} - \tilde{y}^k - \frac{\tau_{k - 1} (1 - \tau_k)}{\tau_k} (y^k - \tilde{y}^{k - 1})\Big].
			\end{array}
			\vspace{-1ex}
		}
     	\State\hspace{3ex}\label{A2_step:i4} Update the dual averaging step:	 $\bar{y}^{k+1} := (1-\tau_k)\bar{y}^k + \tau_ky^{k+1}$.
    	\vspace{0.5ex}   
	\State{\bfseries EndFor}
\end{algorithmic}
\end{algorithm}

\begin{itemize}
\item
Since we aim at obtaining non-ergodic convergence rate, Algorithm \ref{alg:A2} requires one additional $\prox_{f/(\rho_k \norms{K}^2)}(\cdot)$ compared to Algorithm~\ref{alg:A1}. 
We could replace this proximal step by an averaging step: $x^{k+1} := (1-\tau_k)x^k + \tau_k\tilde{x}^{k+1}$, but the convergence rate would no longer be non-ergodic in $\set{x^k}$.
	
\item
At each iteration, Algorithm~\ref{alg:A2} requires one $\prox_{\rho_kg^{\ast}}$, one $\prox_{\beta_kf}$, one\\ $\prox_{f/(\rho_k \norms{K}^2)}$, one $Kx$, and one $K^{\top}y$, 
which incur one more proximal operation of $f$ than in existing primal-dual methods. 
Again, the matrix-vector multiplication at Step~\ref{A2_step:i3} can be eliminated by storing vectors $Kx^k$ and $Kx^{k + 1}$.

\item 
Due to the symmetry between \eqref{eq:com_cvx} and \eqref{eq:dual_prob}, if $g^{\ast}$ is $\mu_{g^{\ast}}$-strongly convex with $\mu_{g^{\ast}} > 0$ (or equivalently, $g$ is $L_g$-smooth with $L_g := 1/\mu_{g^{*}}$), then we can apply Algorithm \ref{alg:A2} to the dual-primal pair instead of the primal-dual pair.
\end{itemize}

\beforesubsec
\subsection{\bf Convergence analysis}
\aftersubsec
We state the convergence of Algorithm~\ref{alg:A2} under Case 1, i.e., \eqref{eq:update_rule_scvx_init}, in the following theorem, whose proof is in Appendix \ref{apdx:th:big_O_convergence_rates_scvx}.

\begin{theorem}[$\BigO{1/k^2}$ convergence rates under Case 1]\label{th:convergence_guarantee_scvx1}
Let $\sets{(x^k, \bar{y}^k)}$ be generated by Algorithm~\ref{alg:A2} using the update \eqref{eq:update_rule_scvx1} and \eqref{eq:update_rule_scvx_init}, and $\widetilde{\Lc}$ be defined by \eqref{eq:min_max}. Then, under Assumptions~\ref{as:A0} and \ref{as:A2}, for any $x,\ x^0\in\R^p$ and $y,\ y^0\in\R^n$, we have
\myeq{eq:convergence_bound_scvx1}{
	\widetilde{\Lc}(x^k, y) - \widetilde{\Lc}(x, \bar{y}^k) \leq \frac{2}{(k+1)^2}\left[\frac{\rho_0 \norms{K}^2 \norms{x^0 - x}^2}{\Gamma} + \frac{\norms{y^0 - y}^2}{(1-\gamma)\rho_0}\right].
}
Moreover, the following statements hold:
\begin{itemize}
\item[\textnormal{(a)}] \textnormal{(Semi-ergodic convergence on the primal-dual gap).} 
The gap function $\Gc_{\Xc\times\Yc}$ defined by \eqref{eq:gap_func} satisfies the following bound for all $k \geq 1$:
\myeq{eq:primal_dual_bound_scvx1}{
	\Gc_{\Xc \times\Yc}(x^k, \bar{y}^k) \leq \frac2{(k +1)^2}  \sup_{(x,y)\in\Xc\times\Yc}\left[\frac{\rho_0\norms{K}^2\norms{x^0 -x}^2}{\Gamma} + \frac{\norms{y^0 - y}^2}{(1 -\gamma)\rho_0}\right]. {\!\!\!\!\!\!\!\!\!\!}
}
Hence, the primal-dual gap sequence $\set{\Gc_{\Xc\times\Yc}(x^k, \bar{y}^k)}$ converges to zero at $\BigO{1/k^2}{\!}$ semi-ergodic rate, i.e., non-ergodic in $x^k$ and ergodic in $\bar{y}^k$.

\item[\textnormal(b)] \textnormal{(Non-ergodic convergence on the primal objective).} 
If $g$ is $M_g$-Lipschitz continuous on $\dom{g}$ and $x^{\star}$ is an optimal solution of \eqref{eq:com_cvx}, then the primal objective residual based on the last-iterate sequence $\sets{x^k}$ satisfies:
\myeq{eq:primal_bound_scvx1}{
	0 \leq F(x^k) - F^{\star} \leq \frac{2}{(k+1)^2}\left[\frac{\rho_0\norms{K}^2\norms{x^0 - x^{\star}}^2}{\Gamma} + \frac{D^2_g}{(1-\gamma)\rho_0}\right],
}
where $D_g := \sup\set{\norms{y^0 - y} \mid \norm{y} \leq M_g}$. 
Hence, $\sets{F(x^k)}$ converges to the primal optimal value $F^{\star}$ of \eqref{eq:com_cvx} at $\BigO{1/k^2}$ rate in non-ergodic sense.

\item[\textnormal{(c)}] \textnormal{(Ergodic convergence on the dual objective).} 
If $f^{\ast}$ is $M_{f^{\ast}}$-Lipschitz continuous on $\dom{f^*}$ and $y^{\star}$ is an optimal solution of \eqref{eq:dual_prob}, then the dual objective residual based on the averaging sequence $\sets{\bar{y}^k}$ satisfies:
\myeq{eq:dual_bound_scvx1}{
	0 \leq G(\bar{y}^k) - G^{\star} \leq \frac{2}{(k+1)^2}\left[\frac{\rho_0\norms{K}^2 D_{f^{\ast}}^2}{\Gamma} + \frac{\norms{y^0 - y^{\star}}^2}{(1-\gamma)\rho_0}\right],
}
where $D_{f^{\ast}} := \sup\set{\norms{x^0 - x} \mid \norm{x} \leq M_{f^{\ast}}}$. 
Hence, $\sets{G(\bar{y}^k)}$ converges to the dual optimal value $G^{\star}$ of \eqref{eq:dual_prob} at $\BigO{1/k^2}$ rate in ergodic sense.
\end{itemize}
\end{theorem}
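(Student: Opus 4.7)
The plan is to start from the one-iteration estimate in Lemma~\ref{le:key_est_scvx2}, introduce Tseng-style weights $W_{j+1} := 1/\tau_j^2$ so that the recursion telescopes cleanly, and then convert the resulting bound on $\Lc_{\rho_{k-1}}(x^k, r^k, y) - \Lc(x, r, \bar{y}^k)$ into \eqref{eq:convergence_bound_scvx1}. The weight choice is dictated by the Tseng identity $\tau_{j+1}^2 = (1-\tau_{j+1})\tau_j^2$ built into \eqref{eq:update_rule_scvx_init}, which gives exactly $W_{j+1}(1-\tau_j) = W_j$ and so cancels the recursive factor. Before telescoping I would verify that \eqref{eq:update_rule_scvx1} makes the two non-positive residuals in \eqref{eq:key_est_scvx2} vanish pointwise: from $\rho_j\beta_j\norms{K}^2 = \Gamma$ and $\eta_j/(\rho_j - \eta_j) = (1-\gamma)/\gamma$, the coefficient $1 - \rho_j\beta_j\norms{K}^2 - \eta_j/(\rho_j - \eta_j) = 2 - \Gamma - 1/\gamma$ equals zero by the definition $\Gamma = 2 - 1/\gamma$; and $\rho_j = \rho_0/\tau_j^2$ combined with $1 - \tau_j = \tau_j^2/\tau_{j-1}^2$ yields $(1-\tau_j)\rho_j = \rho_{j-1}$, so the second bracket also vanishes. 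Both negative square terms can therefore be dropped cost-free.

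Next I multiply \eqref{eq:key_est_scvx2} through by $W_{j+1}$ and sum for $j = 0, \ldots, k-1$. The dual block coefficient $W_{j+1}/\eta_j = 1/((1-\gamma)\rho_0)$ is independent of $j$ and telescopes directly to $\frac{1}{2(1-\gamma)\rho_0}\norms{y^0 - y}^2$ using $\tilde{y}^0 = y^0$. For the primal block, the scaled positive coefficient on $\norms{\tilde{x}^j - x}^2$ is $1/(2\beta_j)$ and the scaled negative coefficient on $\norms{\tilde{x}^{j+1} - x}^2$ is $1/(2\beta_j) + \mu_f/(2\tau_j)$, so collapsing consecutive iterates leaves residual coefficient $\frac{\norms{K}^2}{2\Gamma}(\rho_j - \rho_{j-1}) - \frac{\mu_f}{2\tau_{j-1}}$ on $\norms{\tilde{x}^j - x}^2$. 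Using $\rho_j - \rho_{j-1} = \rho_0/\tau_j$ (a consequence of $\tau_{j-1}^2 - \tau_j^2 = \tau_j\tau_{j-1}^2$), this reduces the telescoping condition to $\rho_0 \leq \Gamma\mu_f\tau_j/(\tau_{j-1}\norms{K}^2)$ for every $j \geq 1$. Since Tseng's identity gives $\tau_j/\tau_{j-1} = \sqrt{1 - \tau_j}$ and $\tau_j \leq \tau_1 = (\sqrt{5} - 1)/2 < 3/4$ for all $j \geq 1$, one has $\tau_j/\tau_{j-1} > 1/2$, and the hypothesis $\rho_0 \leq \Gamma\mu_f/(2\norms{K}^2)$ in \eqref{eq:update_rule_scvx_init} is sufficient uniformly in $j$. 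Because $\tau_0 = 1$ annihilates the initial $S_0$ contribution and $\tilde{x}^0 = x^0$, telescoping yields
\begin{equation*}
\tfrac{1}{\tau_{k-1}^2}\bigl[\Lc_{\rho_{k-1}}(x^k, r^k, y) - \Lc(x, r, \bar{y}^k)\bigr] \leq \tfrac{\rho_0\norms{K}^2}{2\Gamma}\norms{x^0 - x}^2 + \tfrac{1}{2(1-\gamma)\rho_0}\norms{y^0 - y}^2.
\end{equation*}

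The classical Nesterov--Tseng estimate $\tau_{k-1} \leq 2/(k+1)$ (proved inductively from $u_{j+1}^2 - u_{j+1} = u_j^2$ with $u_j := 1/\tau_j$) then supplies the prefactor $2/(k+1)^2$. To pass to $\widetilde{\Lc}$, I use the identity $\widetilde{\Lc}(x, y) = \inf_r \Lc(x, r, y) \leq \Lc_\rho(x, r, y)$ for every $r$ and $\rho > 0$; picking $r \in \partial g^\ast(\bar{y}^k)$ on the right-hand side makes $\Lc(x, r, \bar{y}^k) = \widetilde{\Lc}(x, \bar{y}^k)$, which proves \eqref{eq:convergence_bound_scvx1}. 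Parts (a)--(c) are then specializations: (a) take supremum of both sides over $(x, y) \in \Xc \times \Yc$; (b) set $x := x^\star$ and take supremum over $\set{y \mid \norms{y} \leq M_g}$, noting that $M_g$-Lipschitzness of $g$ forces $\dom{g^\ast} \subseteq \set{y \mid \norms{y} \leq M_g}$ so that $F(x^k) = \sup_{\norms{y} \leq M_g} \widetilde{\Lc}(x^k, y)$ while $\widetilde{\Lc}(x^\star, \bar{y}^k) \leq F^\star$; (c) is the symmetric dual argument with $y := y^\star$ and infimum over $\set{x \mid \norms{x} \leq M_{f^\ast}} \supseteq \dom{f}$, using $-G(\bar{y}^k) = \inf_x \widetilde{\Lc}(x, \bar{y}^k)$ and $\widetilde{\Lc}(x^k, y^\star) \geq F^\star$.

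The main obstacle will be the primal telescoping step, since the $\mu_f$-strong-convexity correction couples $\tau_j$, $\beta_j$, and $\rho_j$ nonlinearly, and verifying that a single fixed threshold on $\rho_0$ suffices for every $j$ relies on a quantitative property of the Tseng sequence---namely $\tau_j/\tau_{j-1} = \sqrt{1 - \tau_j} > 1/2$ for all $j \geq 1$, which exploits $\tau_j \leq \tau_1 < 3/4$ rather than being a purely algebraic consequence of the defining quadratic identity. Once this is in hand, the rest of the argument is bookkeeping driven by the two exact cancellations $1 - \rho_j\beta_j\norms{K}^2 - \eta_j/(\rho_j - \eta_j) = 0$ and $(1-\tau_j)\rho_j = \rho_{j-1}$ already built into \eqref{eq:update_rule_scvx1}.
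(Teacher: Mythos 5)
Your proposal is correct and follows essentially the same route as the paper: the weights $W_{j+1}=1/\tau_j^2$ reproduce exactly the paper's induction with $\prod_{i}(1-\tau_i)=\tau_{k-1}^2$, the two cancellations and the condition $\rho_0\le \Gamma\mu_f\tau_j/(\tau_{j-1}\norms{K}^2)$ (which you verify via $\tau_j/\tau_{j-1}=\sqrt{1-\tau_j}>1/2$, a detail the paper only asserts) are the same parameter checks, and the passage to $\widetilde{\Lc}$ through $r\in\partial g^{\ast}(\bar{y}^k)$ plus $\tau_{k-1}\le 2/(k+1)$ gives \eqref{eq:convergence_bound_scvx1}. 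Parts (a)--(c) differ only cosmetically (you take suprema over the dual/primal balls where the paper constructs explicit points $\breve{y}^k$, $\breve{x}^k$ with $\norms{\breve{y}^k}\le M_g$, $\norms{\breve{x}^k}\le M_{f^{\ast}}$), so the argument is the paper's in all essentials.
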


\begin{remark}[\textit{Optimal rate}]\label{rmk:optimal_rate_O(k^2)}
As shown in \cite[Theorem 2]{woodworth2016tight}, the  $\BigO{1/k^2}$ convergence rate of Algorithm~\ref{alg:A2} is optimal in the sense of Remark \ref{rmk:optimal_rate}.
Moreover, by Assumption~\ref{as:A2}, we can  show that $\set{\norms{x^k \!-\! x^{\star}}^2}$ converges to zero at $\BigO{1/k^2}$ rate.
\end{remark}

If we update the parameters using Case 2, i.e., \eqref{eq:small_o_update_rule_scvx_init}, then Algorithm \ref{alg:A2} achieves the same $\BigO{1/k^2}$ and  an empirically faster $\underline{o}\big(1/(k^2\sqrt{\log{k}})\big)$ rate on the primal objective residual, as shown in the following theorem, whose proof is given in Appendix~\ref{apdx:th:small_o_convergence_rate_scvx}.

\begin{theorem}[$\BigO{1/k^2}$ and $\underline{o}\big(1/(k^2\sqrt{\log{k}})\big)$ convergence rates under Case 2]\label{th:small_o_convergence_rate_scvx}
Let $\sets{x^k}$ be generated by Algorithm~\ref{alg:A2} using the update  \eqref{eq:update_rule_scvx1} and \eqref{eq:small_o_update_rule_scvx_init}. 
Let $\widetilde{\Lc}$ be defined by \eqref{eq:min_max} and $y^\star$ be an optimal solution of \eqref{eq:dual_prob}. Then, under Assumptions~\ref{as:A0} and~\ref{as:A2}, the following statements holds:
\myeq{eq:primal_bound_scvx1b}{
	0 \leq \widetilde{\Lc}(x^k, y^\star) - F^{\star} \leq \frac{R_0^2}{{(k + c - 1)}^2} ~~~\text{and}~~~ \liminf_{k \to \infty} \, k^2\log{(k)} \big[\widetilde{\Lc}(x^k, y^{\star}) - F^{\star} \big] = 0, {\!\!\!\!\!\!}
}
where $R_0^2 := (c \!-\! 1)\left[F(x^0) \!-\! F^\star\right] + \frac{c \!-\! 1}{2} \big[\frac{(c \!-\! 1)\rho_0 \norms{K}^2}\Gamma + c\mu_f\big]\norms{x^0 \!-\! x^\star}^2 + \frac{c^2}{2(1 - \gamma)\rho_0} \norms{y^0 - y^\star}^2$.

Moreover, if $g$ is $M_g$-Lipschitz continuous on $\dom{g}$, then the primal last-iterate sequence $\sets{x^k}$ satisfies
\myeq{eq:primal_bound_scvx1b_primal}{
	0 \leq F(x^k) - F^\star \leq \frac{R_1^2}{{(k + c - 1)}^2}\quad \text{and}\quad\quad \underset{k \to \infty}{\liminf} \, k^2\sqrt{\log{k}} \big[F(x^k) - F^\star\big] = 0,
}
where $R_1^2 := R_0^2 + \sqrt{2c^2/\rho_0}(\norms{y^\star} + M_g)R_0$. Hence, $\sets{F(x^k)}$ converges to the primal optimal value $F^\star$ of \eqref{eq:com_cvx} at both $\BigO{1/k^2}$ and $\underline{o}\big(1/(k^2 \sqrt{\log k})\big)$ convergence rates in non-ergodic sense.
\end{theorem}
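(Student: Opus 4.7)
The plan is to mirror the proof of Theorem~\ref{th:convergence_guarantee_scvx1} but exploit the slack built into the parameter schedule~\eqref{eq:small_o_update_rule_scvx_init} to produce both the $\BigO{1/k^2}$ rate and a summable series that yields the $\underline{o}$-type refinement. First, I would instantiate Lemma~\ref{le:key_est_scvx2} at a primal--dual optimum by taking $(x,r,y):=(x^{\star}, Kx^{\star}, y^{\star})$, so that $\Lc(x^{\star}, Kx^{\star}, \bar{y}^{k+1}) = F^{\star}$ by strong duality. Defining $E_k := \Lc_{\rho_{k-1}}(x^k, r^k, y^{\star}) - F^{\star} \geq 0$ with the convention $r^0 := Kx^0$ (so $E_0 = F(x^0)-F^{\star}$), the choices in~\eqref{eq:update_rule_scvx1} yield $\rho_k\beta_k\norms{K}^2 + \eta_k/(\rho_k-\eta_k) = \Gamma + (1-\gamma)/\gamma = 1$, killing the $\norms{K(x^{k+1}-\hat{x}^k)}^2$ term in \eqref{eq:key_est_scvx2}; the $\norms{Kx^k - r^k}^2$ term has the sign needed to be dropped or absorbed into $E_k$. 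This reduces the one-step bound to a clean Lyapunov recursion in $E_{k+1}$, $\norms{\tilde{x}^{k+1}-x^{\star}}^2$, and $\norms{\tilde{y}^{k+1}-y^{\star}}^2$.

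The heart of the argument is the weighted telescope. Choosing $A_k := (k+c)^2$, one checks $A_k(1-\tau_k) = k(k+c) = A_{k-1} - s_k$ with $s_k := (c-2)(k+c)+1 > 0$ since $c>2$; the weighted dual coefficient $A_k/(2\eta_k) \equiv c^2/[2(1-\gamma)\rho_0]$ is constant in $k$; and absorbing $A_k\tau_k^2/\beta_k$ into $A_{k-1}\tau_{k-1}(\tau_{k-1}+\beta_{k-1}\mu_f)/\beta_{k-1}$ reduces, after cancellations, to the scalar inequality $\rho_0\norms{K}^2[2(k+c-1)+1] \leq \Gamma c\mu_f(k+c-1)$, whose binding instance at $k=0$ is exactly the threshold $\rho_0 \leq c(c-1)\Gamma\mu_f/[(2c-1)\norms{K}^2]$ imposed in~\eqref{eq:small_o_update_rule_scvx_init}. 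Summing the resulting monotone recurrence from $0$ to $k-1$ yields a single estimate
\[
A_{k-1} E_k + \sum_{i=0}^{k-1} s_i E_i + (\text{non-negative residual distance terms}) \leq R_0^2,
\]
with $R_0^2$ matching the constant in the theorem once the boundary piece $(c-1)[F(x^0)-F^{\star}]$ is traced back to the $k=0$ step. The $\BigO{1/k^2}$ bound $E_k \leq R_0^2/(k+c-1)^2$ is immediate, and since $\widetilde{\Lc}(x^k, y^{\star}) - F^{\star} \leq E_k$ this proves the first inequality in~\eqref{eq:primal_bound_scvx1b}. For the $\underline{o}$ claim I invoke $\sum_i s_i E_i < \infty$ with $s_i = \Theta(i)$: were $\liminf_k k^2 \log(k)\, E_k > 0$, then $i E_i \gtrsim 1/(i\log i)$ eventually, contradicting convergence of $\sum 1/(i\log i)$, so $\liminf_k k^2\log(k)\, E_k = 0$.

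To pass from $\widetilde{\Lc}(x^k,y^\star)-F^\star$ to $F(x^k)-F^\star$ under Lipschitz continuity of $g$, I decompose
\[
F(x^k) - F^{\star} = \bigl[\Lc(x^k, r^k, y^{\star}) - F^{\star}\bigr] + \bigl[g(Kx^k) - g(r^k) - \iprods{Kx^k - r^k, y^{\star}}\bigr],
\]
bound the second bracket by $(M_g + \norms{y^{\star}})\norms{Kx^k - r^k}$, and use the augmented-Lagrangian term inside $E_k$ to obtain $\norms{Kx^k - r^k}^2 \leq 2E_k/\rho_{k-1} = 2c^2 E_k/[\rho_0 (k+c-1)^2]$. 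Inserting the $\BigO{1/k^2}$ bound on $E_k$ gives~\eqref{eq:primal_bound_scvx1b_primal} with the stated $R_1^2$, and the $\liminf$ claim follows along the same subsequence since $\sqrt{E_k}/(k+c-1) = \SmallO{1/(k^2\sqrt{\log k})}$ there. The main obstacle is the weighted-telescope bookkeeping: pinpointing the role of $\mu_f$ in absorbing the growing coefficient $A_k\tau_k^2/\beta_k$ and verifying that the binding absorption inequality reproduces \emph{exactly} the $\rho_0$-threshold in~\eqref{eq:small_o_update_rule_scvx_init}; once this is in place, the rest is routine algebra together with a standard Kronecker-type argument.
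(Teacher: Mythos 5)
Your proposal is correct and follows essentially the same route as the paper's proof: you instantiate Lemma~\ref{le:key_est_scvx2} at a saddle point, multiply by the weights $(k+c)^2$, and use $c>2$ together with the $\rho_0$-threshold of \eqref{eq:small_o_update_rule_scvx_init} (correctly identified as binding at $k=0$) to obtain both a monotone Lyapunov bound, giving $\BigO{1/k^2}$, and a summable series $\sum_k \Theta(k)E_k<+\infty$, from which the $\underline{o}$-claims and the passage to $F(x^k)-F^{\star}$ via Lipschitz continuity of $g$ and the augmented penalty term follow exactly as in Appendix~\ref{apdx:th:small_o_convergence_rate_scvx}. The only cosmetic difference is that you track the augmented-Lagrangian gap $E_k$ as one quantity, whereas the paper keeps $\widetilde{\Gc}_k$ and the residual term $a_k^2$ (and the weighted distance $b_k^2$) separate in its Lyapunov function $R_k^2$.
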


\begin{remark}\label{re:comparison2}
The $\underline{o}\big(1/(k^2\sqrt{\log{k}})\big)$ convergence rate stated in Theorem~\ref{th:small_o_convergence_rate_scvx} is attained for sufficiently large $k$.
This does not conflict with the optimal upper bound stated in Remark~\ref{rmk:optimal_rate_O(k^2)}, where the number of iterations $k \leq \BigO{p}$ with $p$ being the dimension of the problem.
\end{remark}

\beforesubsec
\subsection{Application to constrained problems with semi-strongly convex objective}\label{subsec:constr_prob_scvx}
\aftersubsec
Consider the following constrained convex optimization problem:
\myeq{eq:constr_cvx2}{
	F^{\star} := \min_{x \in \R^p,\ w \in \R^q} \Big\{ F(x, w) := f(x) + \psi(w) \quad \text{s.t.}\quad Kx + Bw = b \Big\},
}
where $f : \R^p\to\Rext$ and $\psi : \R^q \to \Rext$ are proper, closed, and convex, $K\in\R^{n\times p}$, $B\in\R^{n\times q}$, and $b\in\R^n$. 
Different from \eqref{eq:constr_cvx}, we assume that:

\begin{assumption}\label{as:A3}
The first objective term $f$ is strongly convex with a modulus $\mu_f > 0$, but the second one $\psi$ is not necessarily strongly convex.{\!\!\!}
\end{assumption}
Note that problem \eqref{eq:constr_cvx2} is not necessarily strongly convex due to the separability of variables $x$ and $w$ in $f$ and $\psi$, respectively.

We modify Algorithm \ref{alg:A2} as follows to solve \eqref{eq:constr_cvx2}:
\myeq{eq:pd_scheme3_semistr}{
\arraycolsep=0.15em
\hspace{-1ex}
\left\{\begin{array}{lcl}
	w^{k + 1} & := & \argmin_{w}\set{ \psi(w) + \iprods{B^{\top}\tilde{y}^k, w} + \frac{\rho_k}{2}\norms{K\hat{x}^k + Bw - b}^2 + \frac{\nu_0}{2}\norms{w - \hat{w}^k}^2},\vspace{1ex}\\
	y^{k + 1} & := & \tilde{y}^k + \rho_k (K\hat{x}^k + Bw^{k + 1} - b),\vspace{1ex}\\		
	\tilde{x}^{k + 1} & := & \prox_{(\beta_k/\tau_k)f} \left(\tilde{x}^k - \frac{\beta_k}{\tau_k} K^\top y^{k + 1}\right),\vspace{0.5ex}\\
	x^{k + 1} & := & \prox_{f/(\rho_k \norms{K}^2)} \left(\hat{x}^k - \tfrac1{\rho_k\norms{K}^2} K^\top y^{k + 1}\right),\vspace{1ex}\\
	\hat{x}^{k + 1}	& :=	& (1 - \tau_{k + 1})x^{k + 1} + \tau_{k + 1} \tilde{x}^{k + 1},\vspace{1ex}\\
	\hat{w}^{k + 1}	& :=	& w^{k + 1} + \frac{\tau_{k + 1} (1 - \tau_k)}{\tau_k} (w^{k + 1} - w^k),\vspace{1ex}\\
	\tilde{y}^{k + 1} & := & \tilde{y}^k + \eta_k \left[(Kx^{k + 1} + Bw^{k + 1} - b) - (1 - \tau_k)(Kx^k + Bw^k - b)\right].
\end{array}\right.
\hspace{-3ex}
}
Here, the parameters $\tau_k,\ \rho_k,\ \beta_k$, and $\eta_k$ are updated as in Algorithm \ref{alg:A2}.

In \eqref{eq:pd_scheme3_semistr}, we combine Algorithm \ref{alg:A2} and an alternating strategy between $x$ and $w$, but we do not linearize the $w^{k+1}$-subproblem to avoid imposing strong convexity on $\psi$.
When necessary, we add a proximal term $\frac{\nu_0}{2}\norms{w-\hat{w}^k}^2$ to guarantee that the $w^{k+1}$-subproblem always has optimal solution.
Note that if $B$ is invertible, then \eqref{eq:constr_cvx2} reduces to \eqref{eq:com_cvx} with $g(\hat{K}x) := \psi(-B^{-1}(Kx - b))$, where $\hat{K} := -B^{-1} K$. 
In this case, we could apply accelerated proximal gradient methods in \cite{attouch2016rate,Beck2009} to the dual problem \eqref{eq:dual_prob} and using the strategy in \cite{necoara2014iteration,necoara2015complexity} to recover a primal approximate solution, but the optimal rate would no longer be non-ergodic. 
Our method is accelerated on the primal problem instead of the dual one as in \cite{necoara2014iteration,necoara2015complexity}.

Finally, we state the convergence of our new scheme \eqref{eq:pd_scheme3_semistr} to solve \eqref{eq:constr_cvx2} in the following corollary, whose proof is given in Appendix \ref{apdx:th:constr_alg_convergence_scvx}.

\begin{corollary}\label{th:constr_alg_convergence_scvx}
Let $\sets{(x^k,w^k, \bar{y}^k)}$ be generated by \eqref{eq:pd_scheme3_semistr} to solve \eqref{eq:constr_cvx2} and its dual problem under Assumptions \ref{as:A0} and \ref{as:A3}. 
Let $(x^\star, w^\star, y^\star)$ be a triple of primal-dual optimal solution. 
If we update the parameters as in \eqref{eq:update_rule_scvx1} and \eqref{eq:update_rule_scvx_init}, then{\!\!\!}
\myeq{eq:constr_alg_convergence2}{
	\vert F(x^k, w^k) - F^{\star} \vert \leq \dfrac{2R_0^2}{(k+1)^2}~~\text{and}~~ \norms{Kx^k + Bw^k - b} \leq \dfrac{2R_0^2}{(k+1)^2},
}
where $R_0^2 := \frac{\rho_0\norms{K}^2}{\Gamma}\norms{x^0 - x^{\star}}^2 + \nu_0\norms{w^0 - w^{\star}}^2 + \frac{1}{\rho_0(1-\gamma)}(2\norms{y^{\star}} + \norms{y^0} + 1)^2$.
Hence, the objective residual and the feasibility violation both converge to zero at $\BigO{1/k^2}$ rate in non-ergodic sense.

If we update the parameters as in \eqref{eq:update_rule_scvx1} and \eqref{eq:small_o_update_rule_scvx_init}, then the $\BigO{1/k^2}$ non-ergodic convergence rate bounds on $\vert F(x^k, w^k) - F^{\star} \vert $ and $\norms{Kx^k + Bw^k - b}$ still hold, and
\myeq{eq:constr_alg_convergence_faster2}{
	\displaystyle\liminf_{k\to\infty}\, k^2\sqrt{\log{k}} \left[\vert F(x^k, w^k) - F^{\star}\vert +  \norms{Kx^k + Bw^k - b} \right] = 0.
}
Hence, the objective residual and feasibility violation sequences both converge to zero at $\BigO{1/k^2}$ and $\underline{o}\big(1/(k^2 \sqrt{\log k})\big)$ non-ergodic convergence rate.
\end{corollary}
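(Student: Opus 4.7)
The plan is to follow the same strategy as Corollary~\ref{th:constr_alg_convergence} but with the strongly-convex primal-dual machinery of Algorithm~\ref{alg:A2}. First, I recast \eqref{eq:constr_cvx2} as a min-max problem with Lagrangian $\Lc(x,w,y) := f(x) + \psi(w) + \iprods{y, Kx + Bw - b}$ and augmented Lagrangian $\Lc_{\rho}(x,w,y) := \Lc(x,w,y) + \tfrac{\rho}{2}\norms{Kx + Bw - b}^2$. Compared with the setting of Lemma~\ref{le:key_est_scvx2}, the residual $Kx - r$ is simply replaced by $Kx + Bw - b$, so the $x$- and $y$-updates in \eqref{eq:pd_scheme3_semistr} are structurally identical to those of Algorithm~\ref{alg:A2}.

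The main technical step is to extend Lemma~\ref{le:key_est_scvx2} to account for the additional $w$-block. Writing the optimality condition of the regularized subproblem in the first line of \eqref{eq:pd_scheme3_semistr} and exploiting convexity of $\psi$, I would first establish, for any $w\in\R^q$,
\begin{equation*}
\psi(w^{k+1}) - \psi(w) + \iprods{y^{k+1}, Bw^{k+1} - Bw} \leq \tfrac{\nu_0}{2}\big[\norms{\hat{w}^k - w}^2 - \norms{w^{k+1} - w}^2 - \norms{w^{k+1} - \hat{w}^k}^2\big].
\end{equation*}
Adding this to the $x$/$y$-analogue of \eqref{eq:key_est_scvx2} (with $Kx - r$ replaced by $Kx + Bw - b$) and using the inertial choice $\hat{w}^{k+1} := w^{k+1} + \tfrac{\tau_{k+1}(1-\tau_k)}{\tau_k}(w^{k+1}-w^k)$ yields a recursion
\begin{equation*}
E_{k+1} \leq (1-\tau_k)E_k + (\text{telescoping quadratics}),
\end{equation*}
with $E_k := \Lc_{\rho_{k-1}}(x^k, w^k, y) - \Lc(x, w, \bar{y}^k)$ augmented by scaled quadratics in $\norms{\tilde{x}^k - x}^2$, $\norms{\hat{w}^k - w}^2$, and $\norms{\tilde{y}^k - y}^2$. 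The cross-term $\rho_k\iprods{K(x^{k+1}-\hat{x}^k), B(w - w^{k+1})}$ arising from the augmented quadratic is absorbed using the negative-curvature term $-\tfrac{\rho_k}{2}\big(1 - \rho_k\beta_k\norms{K}^2 - \tfrac{\eta_k}{\rho_k - \eta_k}\big)\norms{K(x^{k+1}-\hat{x}^k)}^2$ already present in Lemma~\ref{le:key_est_scvx2} together with the $-\tfrac{\nu_0}{2}\norms{w^{k+1}-\hat{w}^k}^2$ term above.

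Since the $\tau_k$-scaling of this recursion is identical to the one analyzed in the proofs of Theorems~\ref{th:convergence_guarantee_scvx1} and~\ref{th:small_o_convergence_rate_scvx}, unrolling it under the parameter choices \eqref{eq:update_rule_scvx1}--\eqref{eq:update_rule_scvx_init} (resp.\ \eqref{eq:small_o_update_rule_scvx_init}) and taking $x := x^\star$, $w := w^\star$ produces an $\BigO{1/k^2}$ bound on $\widetilde{\Lc}(x^k, w^k, y) - F^\star$ uniformly on bounded subsets of $y$ (resp.\ the extra $\underline{o}\big(1/(k^2\sqrt{\log k})\big)$ statement of Theorem~\ref{th:small_o_convergence_rate_scvx}). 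To convert this Lagrangian bound into \eqref{eq:constr_alg_convergence2} and \eqref{eq:constr_alg_convergence_faster2}, I apply the standard trick used in Corollary~\ref{th:constr_alg_convergence}: substituting $y := y^\star + (1+\norms{y^\star})\tfrac{Kx^k + Bw^k - b}{\norms{Kx^k + Bw^k - b}}$ extracts $(1+\norms{y^\star})\norms{Kx^k + Bw^k - b}$ on the left-hand side, which together with $F(x^k, w^k) + \iprods{y^\star, Kx^k + Bw^k - b} - F^\star \geq 0$ delivers the feasibility bound. Combining with the reverse saddle-point inequality $F(x^k, w^k) - F^\star \geq -\norms{y^\star}\norms{Kx^k + Bw^k - b}$ then yields the matching two-sided bound on the objective residual, producing exactly the constants appearing in $R_0^2$.

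The hardest part is the one-iteration estimate. Because the $w^{k+1}$-subproblem is not linearized, its bound is not a direct copy of Lemma~\ref{le:key_est_scvx2}, and the cross-term coupling $B(w^{k+1}-w)$ with $K(x^{k+1}-\hat{x}^k)$ from the augmented quadratic must be absorbed carefully using the proximal regularization $\tfrac{\nu_0}{2}\norms{w - \hat{w}^k}^2$ and the negative curvature inherited from the $x$-step. The inertial update of $\hat{w}^{k+1}$ is chosen precisely to produce a telescoping $\tau_k$-structure matching \eqref{eq:update_rule_scvx_init}, and making all the constants fit will implicitly constrain the admissible range of $\nu_0$ (roughly $\nu_0 \gtrsim \rho_0\norms{B}^2/\Gamma$), which is the reason $\nu_0$ enters the statement of the corollary as a free proximal weight.
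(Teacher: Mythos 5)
Your high-level architecture coincides with the paper's: extend Lemma~\ref{le:key_est_scvx2} to the extra $w$-block, unroll the recursion as in Theorems~\ref{th:convergence_guarantee_scvx1} and~\ref{th:small_o_convergence_rate_scvx}, and convert the resulting Lagrangian bound into objective and feasibility bounds via the bounded-multiplier trick of Corollary~\ref{th:constr_alg_convergence} (your choice of $y$ gives $\norms{y}\leq 2\norms{y^\star}+1$, hence the same constant $(2\norms{y^\star}+\norms{y^0}+1)^2$); that last conversion is fine. The gap is in the one-iteration estimate for the $w$-block. Your proposed inequality
\begin{equation*}
\psi(w^{k+1}) - \psi(w) + \iprods{y^{k+1}, B(w^{k+1}-w)} \leq \tfrac{\nu_0}{2}\big[\norms{\hat w^k - w}^2 - \norms{w^{k+1}-w}^2 - \norms{w^{k+1}-\hat w^k}^2\big]
\end{equation*}
uses convexity of $\psi$ anchored only at the comparison point $w$; it is the ``$\tau_k=1$'' version and cannot generate the $(1-\tau_k)\psi(w^k)$ term needed for the contraction $E_{k+1}\leq(1-\tau_k)E_k+\cdots$ of the full Lagrangian difference. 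Moreover, the quadratics it produces, with constant weight $\nu_0/2$ and anchored at $\hat w^k$, do not telescope: iteration $k$ leaves $-\tfrac{\nu_0}{2}\norms{w^{k+1}-w}^2$ while iteration $k+1$ brings $+\tfrac{\nu_0}{2}\norms{\hat w^{k+1}-w}^2$, and $\hat w^{k+1}\neq w^{k+1}$ because of the momentum extrapolation; after the $1/\tau_k^2$ (i.e., $(k+c)^2$) weighting required for the $\BigO{1/k^2}$ rate this mismatch is not summable for free. The paper instead uses the three-point form $\psi(w^{k+1})\leq(1-\tau_k)\psi(w^k)+\tau_k\psi(w)+\iprods{\nabla\psi(w^{k+1}),\, w^{k+1}-(1-\tau_k)w^k-\tau_k w}$ together with the auxiliary sequence $\tilde w^{k+1}:=\tfrac1{\tau_k}\big[w^{k+1}-(1-\tau_k)w^k\big]$, so that $\hat w^k=(1-\tau_k)w^k+\tau_k\tilde w^k$ and $w^{k+1}-\hat w^k=\tau_k(\tilde w^{k+1}-\tilde w^k)$; this yields the telescoping term $\tfrac{\tau_k^2\nu_0}{2}\big[\norms{\tilde w^k-w}^2-\norms{\tilde w^{k+1}-w}^2\big]$ in \eqref{eq:key_lm_semistr}, whose weight $\tau_k^2\nu_0$ is exactly compatible with $1-\tau_k=\tau_k^2/\tau_{k-1}^2$ in Case 1 and with the $(k+c)^2$ weighting in Case 2. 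Without this, the recursion you assert does not follow from your stated inequality.

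A secondary point: the anticipated restriction $\nu_0\gtrsim\rho_0\norms{B}^2/\Gamma$ is spurious, and the cross term $\rho_k\iprods{K(x^{k+1}-\hat x^k), B(w-w^{k+1})}$ you plan to absorb with it does not appear if the $w$-subproblem is kept unlinearized as in \eqref{eq:pd_scheme3_semistr}: since $w^{k+1}$ minimizes the full augmented subproblem, its optimality condition is $0\in\partial\psi(w^{k+1})+B^\top y^{k+1}+\nu_0(w^{k+1}-\hat w^k)$ with $y^{k+1}=\tilde y^k+\rho_k(K\hat x^k+Bw^{k+1}-b)$, so all $B$-coupling is carried by $y^{k+1}$ and by the $\tilde y$-update exactly as in the proof of Lemma~\ref{le:key_est_scvx2}. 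In the paper, $\nu_0>0$ is a free proximal weight (present only to guarantee solvability of the subproblem) and enters the result solely through $\nu_0\norms{w^0-w^\star}^2$ in $R_0^2$; note also that a fixed $\nu_0$ could never dominate a cross term carrying the growing factor $\rho_k=\rho_0/\tau_k^2$, and that the term you mention involves $w-w^{k+1}$ rather than $w^{k+1}-\hat w^k$, so it could not be absorbed by $-\tfrac{\nu_0}{2}\norms{w^{k+1}-\hat w^k}^2$ in any case.
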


\begin{remark}\label{re:small_rate2}
To the best of our knowledge, Corollary~\ref{th:constr_alg_convergence_scvx} presents the first fast $\min\sets{\BigO{1/k^2},\ \underline{o}\big(1/(k^2\sqrt{\log{k}})\big)}$ convergence result for general constrained convex problem \eqref{eq:constr_cvx2} under the semi-strong convexity assumption, i.e., $f$ is strongly convex, but $\psi$ is non-strongly convex.
\end{remark}

\beforesec
\section{Numerical illustrations}\label{sec:num_experiments}
\aftersec
We verify the theoretical statements in this paper through two well-known examples. Our code is implemented in MATLAB (R2014b) running on a MacBook Pro with 2.7 GHz Intel Core i5 and 16GB memory, and available at \href{https://github.com/quoctd/PrimalDualCvxOpt}{{\color{blue}https://github.com/quoctd/PrimalDualCvxOpt}}. 

\beforesubsec
\subsection{\bf Ergodic vs. non-ergodic convergence rates}\label{subsec:LAD}
\aftersubsec
We consider the following nonsmooth composite convex minimization problem:
\myeq{eq:LAD}{
	F^{\star} := \min_{x\in\R^p}\Big\{ F(x) := f(x) + \norms{Kx - b}_1 \Big\},
}
where $K\in\R^{n\times p}$, $b\in\R^n$, and $f(x)$ is a regularizer. 
This problem fits template \eqref{eq:com_cvx} with $g(r) := \norms{r - b}_1$.
We compare Algorithm~\ref{alg:A1} with two well-established methods: Chambolle-Pock's method (CP) \cite{Chambolle2011} and ADMM \cite{Boyd2011}.
When $f$ is strongly convex, we compare Algorithm~\ref{alg:A2} with the strongly convex variant of CP (CP-scvx) \cite{Chambolle2011, chambolle2016ergodic}.

\beforepara
\paragraph{Case 1 $($General convex case$)$}\label{para:exp_lad_case1}

We choose $f(x) := \lambda\norm{x}_1$ in \eqref{eq:LAD} with a regularization parameter $\lambda = 0.05$. We generate the entries of $K$ from $\Nc(0, 1)$, and set $b := Kx^{\natural} + e$, where $x^{\natural}$ is an $s$-sparse vector, and $e$ is a sparse Gaussian noise with variance $\sigma^2 := 0.01$ and $10\%$ nonzero entries. 
The problem size is $(n, p) = (2000, 640)$.

We run two variants of Algorithm~\ref{alg:A1} with $c = 1$ and $c=2$. Since CP and ADMM both have $\BigO{1/k}$ convergence rate on only the \textit{ergodic} sequence of the relative objective residual $\frac{F(\bar{x}^k) - F^{\star}}{\max\set{1,\ \vert F^{\star}\vert}}$, we compare this sequence with the \textit{non-ergodic} sequence of Algorithm~\ref{alg:A1}, so that all algorithms under comparison have some theoretical guarantee. 
Here, $F^{\star}$ is computed by Mosek \cite{mosek} with the highest precision. 

For Algorithm~\ref{alg:A1}, we use $\rho_0 := 5\cdot\big(\frac{\gamma}{1-\gamma}\big)^{1/2} \cdot \frac{\norms{y^0 - y^{\star}}}{\norms{K}\norms{x^0 - x^{\star}}}$ with $\gamma := 0.999$ as guided by Theorems~\ref{th:convergence_guarantee1} and \ref{th:faster_convergence_rate}. For CP method, we choose its step-sizes $\rho := \rho_0$
and $\beta := \frac\gamma{\norms{K}^2\rho}$.
To be fair in our comparison, we also try step-sizes $0.1\rho_0$ and $10\rho_0$. 
For ADMM, we reformulate \eqref{eq:LAD} into the constrained problem \eqref{eq:constr_reform} by introducing $r := Kx - b$.
Similar to the CP method, we tune the penalty parameter for ADMM and find that three different values $\rho := 0.5\rho_0$, $10\rho_0$, and $30\rho_0$ represent the best range for $\rho$.

The relative objective residuals are plotted in Figure~\ref{fig:LAD_convergence} (left) for the non-ergodic (last-iterate) sequence of Algorithm~\ref{alg:A1} and for the ergodic (averaging) sequence of CP and ADMM. All algorithms achieve $\BigO{1/k}$ rate.
The ergodic sequences of CP and ADMM, while having theoretical convergence guarantees, are slower than ours. 

\beforepara
\paragraph{Case 2 (Strongly convex case)}
We choose $f(x) := \lambda\norms{x}_1 + \frac{\mu_f}2 \norms{x}^2$ in \eqref{eq:LAD} with $\lambda := 0.05$ and $\mu_f := 0.1$, and generate problem instances the same way as in \textit{Case 1} but with $50\%$ correlated columns in $K$.

Since $f$ is $\mu_f$-strongly convex, we test Algorithm \ref{alg:A2} on \eqref{eq:LAD}. 
If we use \eqref{eq:update_rule_scvx_init} to update parameters, then we choose $\gamma = 0.999$ and  $\rho_0^1 := \frac{\Gamma \mu_f}{2\norms{K}^2}$. We also run a variant with $\rho_0^{1+} := \tfrac{5\Gamma\mu_f}{2\norms{K}^2}$ since it leads to empirically better performance, suggesting that our analysis in Theorem \ref{th:convergence_guarantee_scvx1} may not be tight.
If we use \eqref{eq:small_o_update_rule_scvx_init} to update parameters, we choose $c := 4$, $\gamma = 0.75$, and $\rho_0^2 := \frac{c(c-1)\Gamma\mu_f}{(2c-1)\norms{K}^2}$. 
For comparison, we implement CP-scvx in \cite{Chambolle2011} with initial penalty parameter $\rho^\text{CP} := \frac1{\norms{K}}$ as suggested in convergence analysis in \cite{chambolle2016ergodic}. We also test its variants with $0.01\rho^\text{CP}$, $0.75\rho^\text{CP}$ and $5\rho^\text{CP}$.

The convergence behavior of this test is plotted in Figure~\ref{fig:LAD_convergence} (right). Both Algorithm~\ref{alg:A2} and CP-scvx show $\BigO{1/k^2}$ convergence rate as predicted by the theory. Algorithm~\ref{alg:A2} using the update \eqref{eq:small_o_update_rule_scvx_init}  with $c=4$ is the fastest. CP-scvx, on the other hand, is sensitive to the parameter choice, and even its best variant underperforms variants of Algorithm \ref{alg:A2} with parameters $\rho_0^{1+}$ and $\rho_0^2$.

\begin{figure}[htp!]
	\begin{center}
		\vspace{0ex}
		\includegraphics[width=1.0\textwidth]{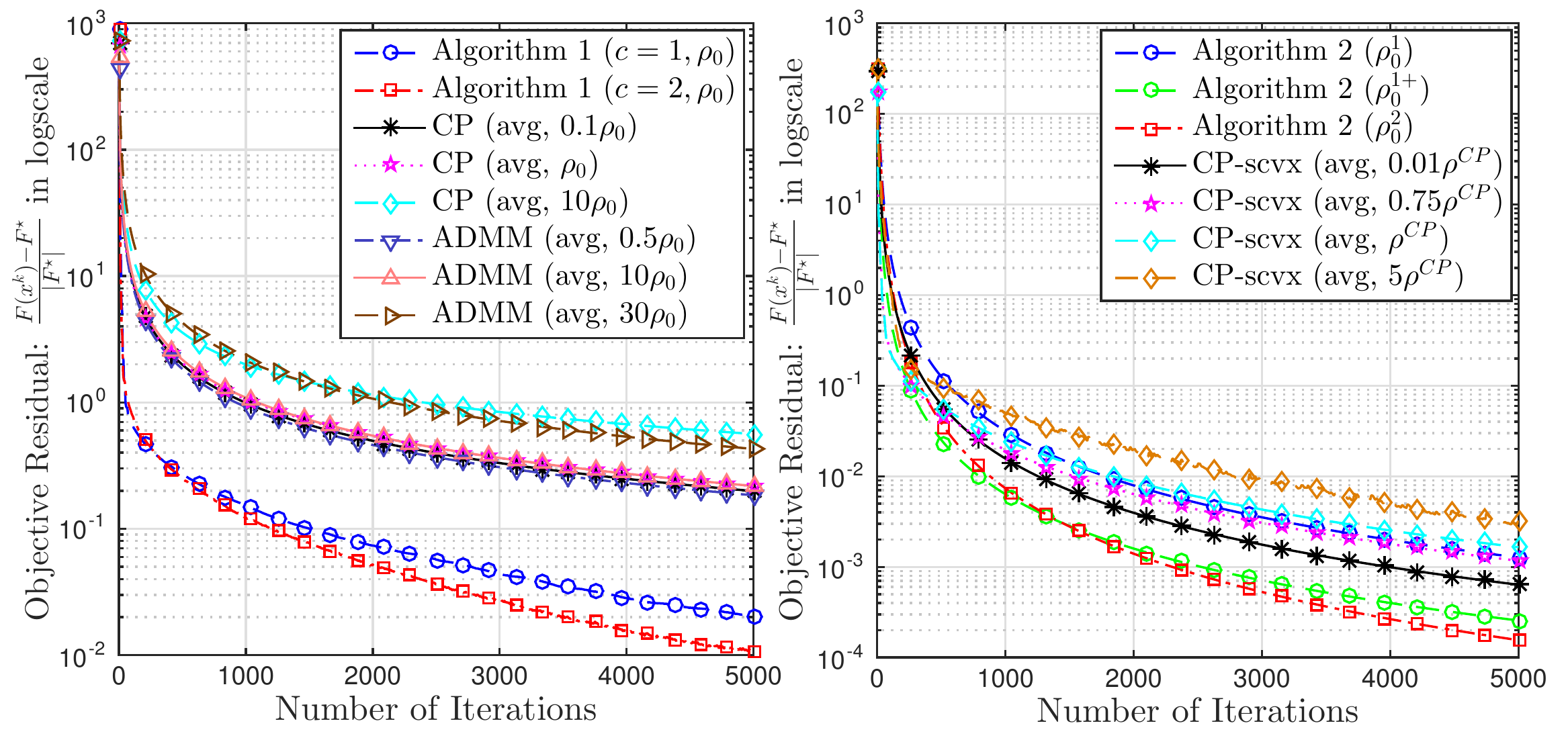} 
		\vspace{-3ex}
		\caption{Convergence behavior of algorithmic variants on \eqref{eq:LAD} with $K$ of size $(n, p) = (2000, 640)$. 
		Left: Case 1 $($general convex$)$ with $8$ variants; Right: Case 2 $($strongly convex$)$ with $7$ variants.}
		\label{fig:LAD_convergence}
		\vspace{0ex}
	\end{center}
\end{figure}

\beforesubsec
\subsection{\bf Primal-dual methods vs. smoothing techniques}\label{subsec:minmax_game}
\aftersubsec
Consider the following matrix min-max game problem studied, e.g., in \cite{Nesterov2005c}:
\myeq{eq:minmax_game}{
	F^{\star} := \min_{x\in\Delta_p} \Big\{ F(x) := \max_{y\in\Delta_n}\iprods{Kx, y} \Big\},
}
where $K \in \R^{n\times p},\ \Delta_p := \sets{ x\in\R^p_{+} \mid \sum_{j=1}^px_j = 1}$ and $\Delta_n := \set{ y\in\R^n_{+} \mid \sum_{i=1}^ny_i = 1}$ are two standard simplexes in $\R^p$ and $\R^n$, respectively. This problem can be cast into our template \eqref{eq:min_max} with $f(x) := \delta_{\Delta_p}(x)$ and $g^{\ast}(y) := \delta_{\Delta_n}(y)$, where $\Delta$ is the indicator function. In our experiment, the problem size is $(n, p) = (1000, 2000)$, and $K$ is 10\%-sparse with nonzero entries generated from Uniform$(-1, 1)$ distribution, then $K$ is normalized such that $\norms{K} = 1$.

We compare Algorithm~\ref{alg:A1} and Nesterov's smoothing technique in \cite{Nesterov2005c}. They both achieve the same theoretical $\BigO{1/k}$ convergence rate, but the performance of smoothing techniques depends on the choice of accuracy, as illustrated \cite{Nesterov2005c}.

For Algorithm~\ref{alg:A1}, we choose $\gamma := 0.5$ and $\rho_0 := \frac{1}{\norms{K}} = 1$ to balance the upper bound in Theorem~\ref{th:convergence_guarantee1}. We also update $\tau_k$ with $c := 1$ and $c := 2$ to obtain two variants.

For Nesterov's smoothing technique, since $\norms{K} = 1$, we use Euclidean distance to smooth $F(x) := \max_{y\in\Delta_n}\iprods{Kx,y}$ as $F_{\mu}(x) := \max_{y\in\Delta_n}\sets{\iprods{Kx,y} - \tfrac{\mu}{2}\norms{y - y_c}^2}$, which gives a better complexity bound than entropy proximity functions \cite[(4.11)]{Nesterov2005c}. Here $\mu > 0$ is the smoothness parameter and $y_c := (1/n,\cdots, 1/n)^{\top}$ is the center of $\Delta_n$. As suggested in \cite[(4.8)]{Nesterov2005c}, once the accuracy $\varepsilon > 0$ is fixed, we accordingly set the number of iterations $k_\text{max} := \frac{4\norms{K}}{\varepsilon} {\big[(1-\frac{1}{n})(1-\frac{1}{p})\big]}^{1/2}$ and the smoothness parameter $\mu := \frac{\varepsilon}{2(1-1/n)}$. We also run this algorithm with $5\mu$ and $\mu/5$ to observe its sensitivity to the choice of $\mu$.

We run Algorithm \ref{alg:A1} and Nesterov's smoothing method using the above configurations. 
If we test two cases with $\varepsilon_1 = 10^{-3}$ and $\varepsilon_2 = 10^{-4}$, then the corresponding numbers of iterations are $k_\text{max, 1} := 3,997$ and $k_\text{max, 2} := 39,970$, respectively. 
The duality gap $F(x^k) + G(\bar{y}^k)$ of this test is plotted in Figure~\ref{fig:duality_gap_c1}, where we observe that all algorithms indeed follow the $\BigO{1/k}$ convergence rate. However, the performance of Nesterov's smoothing method crucially depends on the choice of smoothness parameter $\mu$.
Algorithm~\ref{alg:A1} with $c=2$ outperforms all other methods in both cases.

\begin{figure}[htp!]
	\begin{center}
		\vspace{0ex}
		\includegraphics[width = 1.0\textwidth]{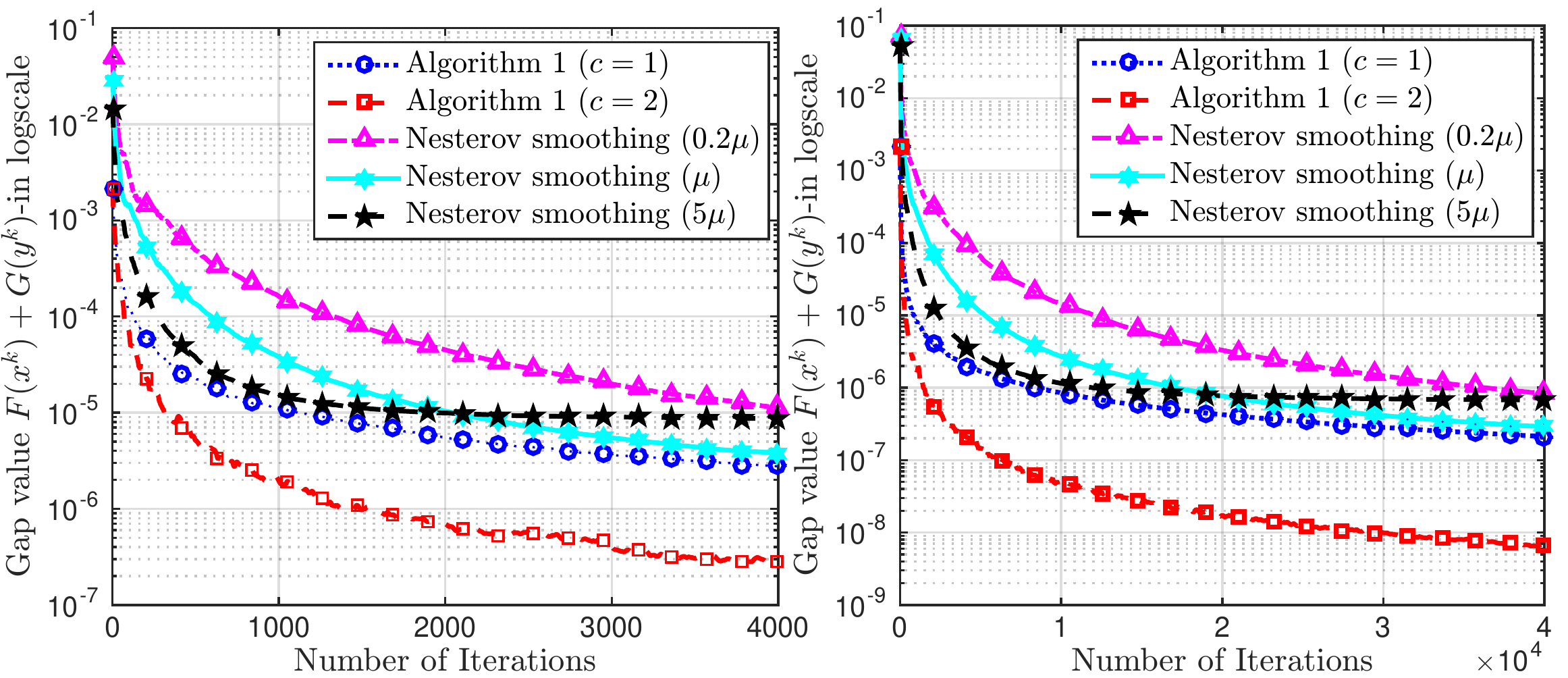}
		\vspace{-3ex}
		\caption{The convergence behavior of $5$ algorithmic variants on \eqref{eq:minmax_game} with a $10\%$-sparse matrix $K$ of size $(n, p) := (1000, 2000)$. 
		Left: $\varepsilon_1 = 10^{-3}$; Right: $\varepsilon_2 = 10^{-4}$.}
		\label{fig:duality_gap_c1}
		\vspace{0ex}
	\end{center}
\end{figure}

\beforesec
\section*{\small Acknowledgments}
\aftersec
{\small 
This paper is based upon work partially supported by the National Science Foundation (NSF), grant no. DMS-1619884, and the Office of Naval Research (ONR), grant No. N00014-20-1-2088.}

\appendix
\normalsize

\beforesec
\section{Some elementary results}
\aftersec
We recall some useful facts that will be used in the proof of our main results.

\begin{lemma}\label{lm:facts}
The following statements hold:
\begin{itemize}
\item[\textnormal{(a)}] Let $\phi_{\rho}(x, r, y) := \frac{\rho}{2}\norms{Kx - r}^2 + \iprods{y, Kx - r}$ be defined in \eqref{eq:aug_Lag} for $\rho > 0$.
Then, $\nabla_x\phi_{\rho}(x, r, y) = K^{\top} (y + \rho(Kx - r))$ and $\nabla_r \phi_{\rho}(x, r, y) = \rho(r - Kx) - y$. 
Furthermore, for any $x, x^\prime \in \R^p$ and $r, r^\prime, y \in \R^n$, we have
\myeqf{eq:phi_rho_properties}{
\arraycolsep=0.2em
\begin{array}{lcl}
	\phi_{\rho}(x^{\prime}, r^{\prime}, y) & = & \phi_{\rho}(x, r, y) + \iprods{\nabla_x\phi_{\rho}(x, r, y), x^{\prime} - x}\vspace{1.2ex}\\
	& & + {~} \iprods{\nabla_r\phi_{\rho}(x, r, y), r^{\prime} - r} + \frac{\rho}{2}\norms{K(x^{\prime} - x) - (r^{\prime} - r)}^2.
\end{array}
}
\item[\textnormal{(b)}] For any $u, v, w \in \R^p$ and $\alpha_1, \alpha_2 \in \R$, $\alpha_1 + \alpha_2 \neq 0$, it holds that
\myeqfn{
	\alpha_1 \norms{u - w}^2 + \alpha_2 \norms{v - w}^2 = (\alpha_1 + \alpha_2)\norms{w - \tfrac1{\alpha_1 + \alpha_2} (\alpha_1 u + \alpha_2 v)}^2 + \tfrac{\alpha_1 \alpha_2}{\alpha_1 + \alpha_2} \norms{u - v}^2.
}
\item[\textnormal{(c)}] If a nonnegative sequence $\set{u_k} \subseteq [0, +\infty)$ satisfies $\sum_{i = 0}^\infty u_k < +\infty$, then $\liminf_{k \to \infty} k\log{(k)} u_k = 0$.{\!\!\!} 
\item[\textnormal{(d)}] 
Let $\sets{u_k}$ and $\sets{v_k}$ be two nonnegative sequences in $\R$ and $\alpha_1, \alpha_2 \in \R_{++}$ be two positive constants.
Then, the following statements hold:
\begin{itemize}
\item[$\mathrm{(i)}$] If $\displaystyle\liminf_{k \to \infty}\big[ k\log{(k)} (u_k + \alpha_1 k v_k^2)\big] = 0$, then   $\displaystyle\liminf_{k \to \infty} \big[ k\sqrt{\log{k}} (u_k  +  \alpha_2 v_k) \big] = 0$.
\item[$\mathrm{(ii)}$] If $\displaystyle\liminf_{k \to \infty} \big[ k^2\log{(k)} (u_k  \ + \ \alpha_1 k^2 v_k^2) \big] =  0$, then  $\displaystyle\liminf_{k \to \infty} \big[ k^2\sqrt{\log{k}} (u_k  +  \alpha_2 v_k) \big] = 0$.
\end{itemize}
\end{itemize}
\end{lemma}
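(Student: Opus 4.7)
\textbf{Proof proposal for Lemma~\ref{lm:facts}.}

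For part (a), the plan is to directly differentiate $\phi_\rho(x,r,y) = \tfrac{\rho}{2}\norms{Kx-r}^2 + \iprods{y, Kx-r}$ with respect to $x$ and $r$ using the chain rule, which immediately yields $\nabla_x\phi_\rho = K^\top(y+\rho(Kx-r))$ and $\nabla_r\phi_\rho = \rho(r-Kx)-y$. The identity \eqref{eq:phi_rho_properties} is then a consequence of the fact that $\phi_\rho$ is a quadratic function of $(x,r)$: its second-order Taylor expansion around $(x,r)$ terminates exactly at the quadratic term, and the Hessian in the direction $(x'-x, r'-r)$ gives precisely $\rho\norms{K(x'-x)-(r'-r)}^2$. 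I would verify this by expanding $\tfrac{\rho}{2}\norms{Kx'-r'}^2$ using $Kx'-r' = (Kx-r) + (K(x'-x) - (r'-r))$ and simplifying the cross term against the gradient expressions above.

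For part (b), my approach is to expand the RHS and match it against the LHS. Writing $c := (\alpha_1 u + \alpha_2 v)/(\alpha_1+\alpha_2)$ and expanding $\norms{w - c}^2 = \norms{w}^2 - 2\iprods{w, c} + \norms{c}^2$, the coefficient of $\norms{w}^2$ on the RHS becomes $\alpha_1+\alpha_2$, matching the LHS. The cross terms against $w$ also match by construction. The remaining scalar identity $\alpha_1\norms{u}^2 + \alpha_2\norms{v}^2 = (\alpha_1+\alpha_2)\norms{c}^2 + \tfrac{\alpha_1\alpha_2}{\alpha_1+\alpha_2}\norms{u-v}^2$ is a routine algebraic verification.

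For part (c), I would argue by contradiction. Suppose $\liminf k\log(k) u_k = \ell > 0$. Then there exists $k_0$ such that for all $k \geq k_0$, $u_k \geq \tfrac{\ell}{2 k \log k}$. But $\sum_{k\geq k_0} \tfrac{1}{k\log k} = +\infty$ (by comparison with the integral $\int \tfrac{dt}{t\log t} = \log\log t$), contradicting $\sum u_k < +\infty$. Hence $\ell = 0$. The main delicate point here is that we only get a liminf statement, not a limit, because $u_k$ need not decrease monotonically.

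For part (d), the strategy is to extract a subsequence and take square roots. For (i), by hypothesis there is a subsequence $\{k_j\}$ with $k_j\log(k_j)\,[u_{k_j} + \alpha_1 k_j v_{k_j}^2] \to 0$. This forces both $k_j\log(k_j)\, u_{k_j} \to 0$ and $\alpha_1 k_j^2 \log(k_j)\, v_{k_j}^2 \to 0$. From the first, $k_j\sqrt{\log k_j}\, u_{k_j} \to 0$ since $\sqrt{\log k_j} \leq \log k_j$ for large $j$. From the second, taking square roots, $\sqrt{\alpha_1}\, k_j \sqrt{\log k_j}\, |v_{k_j}| \to 0$, and absorbing the constant gives $k_j\sqrt{\log k_j}\, \alpha_2 v_{k_j} \to 0$. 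Summing these two along the subsequence gives the conclusion. Part (ii) follows by the exact same argument with $k$ replaced by $k^2$ throughout. The main obstacle anticipated is being careful about signs and the fact that we need $v_k \geq 0$ (which is given) so that $\sqrt{v_k^2} = v_k$; otherwise the square-root step would produce $|v_k|$ and require an extra step.
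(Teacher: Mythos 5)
Your proposal is correct and follows essentially the same route as the paper: parts (a) and (b) are direct algebraic expansions (which the paper dismisses as trivial), part (c) is the same contradiction argument using the divergence of $\sum_k \tfrac{1}{k\log k}$, and part (d) is the same subsequence-extraction plus square-root argument, with your explicit use of $v_k \geq 0$ and $\sqrt{\log k_j}\leq \log k_j$ matching the paper's $\varepsilon$-threshold formulation. No gaps.
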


\begin{proof}
The statements (a) and (b) are trivial. 
We only prove parts (c) and (d). 

(c)~Since $u_k \geq 0$ and the $\liminf$ of a lower bounded sequence always exists, we set $\bar{u} := \liminf_{k \to \infty} k\log{(k)}u_k \geq 0$.{\!} 
Suppose that $\bar{u} > 0$. 
Then, by definition, for any $\varepsilon > 0$ such that $\bar{u} - \varepsilon > 0$, there exists an integer $k_\varepsilon > 0$ such that for any $k \geq k_\varepsilon$, we have $k\log{(k)}u_k \geq \bar{u} - \varepsilon$.
This leads to
\myeqn{
+\infty > \sum_{k = 0}^\infty u_k \geq \sum_{k = k_\varepsilon}^\infty u_k \geq \sum_{k = k_\varepsilon}^\infty \frac{\bar{u} - \varepsilon}{k\log{k}} = (\bar{u} - \varepsilon)\sum_{k = k_\varepsilon}^\infty \frac{1}{k\log{k}} = +\infty,
}
which is a contradiction.
Hence, we must have $\bar{u} = 0$.

(d)~Since $\liminf_{k \to \infty} k\log{(k)}(u_k + \alpha_1 k v_k^2) = 0$, there exists a convergent subsequence $\sets{k_j\log(k_j) (u_{k_j} + \alpha_1 k_j v_{k_j}^2)}_{j \geq 0}$ converging to $0$, i.e., for any $\varepsilon > 0$, there exists $j_0\geq 0$ such that for all $j \geq j_0$, we have $k_j\log(k_j) (u_{k_j} + \alpha_1 k_j v_{k_j}^2) < \min\set{\frac\varepsilon2,\  \frac{\alpha_1\varepsilon^2}{4\alpha_2^2} }$.
This inequality implies that
\begin{equation*}
\begin{array}{ll}
& k_j\sqrt{\log(k_j)} u_{k_j} < \frac{\varepsilon}{2} \vspace{1ex}\\
\text{and} & \alpha_2 k_j\sqrt{\log(k_j)} v_{k_j} = \frac{\alpha_2}{\sqrt{\alpha_1}} \sqrt{\alpha_1 k_j^2\log(k_j) v_{k_j}^2} < \frac{\alpha_2}{\sqrt{\alpha_1}} \sqrt{ \frac{\alpha_1\varepsilon^2}{4\alpha_2^2}} = \frac\varepsilon2.
\end{array}
{\!\!}\end{equation*}
Combining both inequalities, we can show that $k_j\sqrt{\log(k_j)}(u_{k_j} + \alpha_1 k_j v_{k_j}) < \tfrac\varepsilon2 + \tfrac\varepsilon2 = \varepsilon$, which proves part (i) of (d). 
Part (ii) of (d) can be proved analogously.
\end{proof}

\beforesec
\section{Technical proofs in Section~\ref{sec:new_pd_algs}: General convex case}\label{apdx:proofs1}
\aftersec
This appendix provides the full proof of technical results in Section~\ref{sec:new_pd_algs}.

\beforesubsec
\subsection{\textbf{The proof of Lemma~\ref{le:key_bound1}: One-iteration analysis}}\label{apdx:le:key_bound1}
\aftersubsec
First, we write down the optimality conditions of $x^{k+1}$ and $r^{k+1}$ in \eqref{eq:pd_scheme1} as follows:
\myeqf{eq:lm11_est1}{
\arraycolsep=0.3em
{\!\!\!}\left\{\begin{array}{lcl}
	0 & \in & \partial{g}(r^{k+1}) + \rho_k(r^{k+1} - K\hat{x}^k) - \tilde{y}^k \equiv \partial{g}(r^{k+1}) + \nabla_r\phi_{\rho_k}(\hat{x}^k, r^{k+1}, \tilde{y}^k),\vspace{1.2ex}\\
	0 & \in & \partial{f}(x^{k+1}) + \nabla_x\phi_{\rho_k}(\hat{x}^k, r^{k+1}, \tilde{y}^k) + \frac{1}{\beta_k}(x^{k+1} - \hat{x}^k).
\end{array}\right.{\!\!\!}
}
By convexity of $f$ and $g$, and the above optimality conditions, we can derive
\myeqf{eq:lm11_est1b}{
\hspace{-1ex}\left\{\arraycolsep=0.1em
\begin{array}{lcl}
	g(r^{k+1}) & \leq & g(r)\!+\!\iprods{\nabla{g}(r^{k+1}), r^{k+1}\!-\!r}\!\stackrel{\eqref{eq:lm11_est1}}=\!g(r)\!+\!\iprods{\nabla_r{\phi_{\rho_k}}(\hat{x}^k, r^{k+1}, \tilde{y}^k), r\!-\!r^{k+1}},\vspace{1.25ex}\\
	f(x^{k+1}) & \leq & f(x) + \iprods{\nabla{f}(x^{k+1}), x^{k+1} - x} \vspace{1ex}\\
	& \stackrel{\eqref{eq:lm11_est1}}= & f(x) + \iprods{\nabla_x{\phi_{\rho_k}}(\hat{x}^k, r^{k+1}, \tilde{y}^k), x - x^{k+1}} + \tfrac{1}{\beta_k}\iprods{x^{k+1} - \hat{x}^k,  x - x^{k+1}},
\end{array}{\!\!}\right.
\hspace{-1.5ex}
}
where $\nabla{f}(x^{k+1}) \in \partial{f}(x^{k+1})$ and $\nabla{g}(r^{k+1}) \in  \partial{g}(r^{k+1})$. 

\noindent Next, using Lemma \ref{lm:facts}(a) twice, we can derive
\myeqf{eq:lml1_melodyref1_prep}{
\arraycolsep=0.3em
{\!\!\!}\left\{\begin{array}{l}
	\phi_{\rho_k} (x^{k + 1}, r^{k + 1}, \tilde{y}^k) = \phi_{\rho_k} (\hat{x}^k, r^{k + 1}, \tilde{y}^k) + \iprods{\nabla_x \phi_{\rho_k} (\hat{x}^k, r^{k + 1}, \tilde{y}^k), x^{k + 1} - \hat{x}^k}\vspace{1ex}\\
	\phantom{\phi_{\rho_k}}  {~~~~~~} + \frac{\rho_k}2 \norms{K(x^{k + 1} - \hat{x}^k)}^2,\vspace{1.25ex}\\
	\phi_{\rho_k} (x, r, \tilde{y}^k) = \phi_{\rho_k} (\hat{x}^k, r^{k + 1}, \tilde{y}^k) + \iprods{\nabla_x \phi_{\rho_k} (\hat{x}^k, r^{k + 1}, \tilde{y}^k), x - \hat{x}^k}\vspace{1ex}\\
	\phantom{\phi_{\rho_k}}{~~~~~~} + \iprods{\nabla_r \phi_{\rho_k} (\hat{x}^k, r^{k + 1}, \tilde{y}^k), r - r^{k + 1}} + \frac{\rho_k}2 \norms{K(x - \hat{x}^k) - (r - r^{k + 1})}^2.{\!}
\end{array}\right.
}
Combining the two expressions in \eqref{eq:lml1_melodyref1_prep}, we get
\myeqf{eq:lm11_melodyref1}{
\hspace{-1ex}\arraycolsep=0.3em
\begin{array}{lcl}
	\phi_{\rho_k} (x^{k + 1}, r^{k + 1}, \tilde{y}^k) & = & \phi_{\rho_k} (x, r, \tilde{y}^k) + \iprods{\nabla_x \phi_{\rho_k} (\hat{x}^k, r^{k + 1}, \tilde{y}^k), x^{k + 1} - x}\vspace{1.25ex}\\
	& & + {~} \iprods{\nabla_r \phi_{\rho_k} (\hat{x}^k, r^{k + 1}, \tilde{y}^k), r^{k + 1} - r} + \frac{\rho_k}2 \norms{K(x^{k + 1} - \hat{x}^k)}^2\vspace{1.25ex}\\
	& & - {~} \frac{\rho_k}2 \norms{K(x - \hat{x}^k) - (r - r^{k + 1})}^2.
\end{array}
\hspace{-2ex}
}
Summing up \eqref{eq:lm11_est1b} and \eqref{eq:lm11_melodyref1}  and using \eqref{eq:aug_Lag}, we arrive at
\myeqf{eq:lm11_est1b2}{
\arraycolsep=0.3em
{\!\!\!\!\!}\begin{array}{lcl}
	\Lc_{\rho_k}(x^{k+1}, r^{k+1}, \tilde{y}^k) & \leq & \Lc_{\rho_k}(x, r, \tilde{y}^k) + \frac{1}{\beta_k}\iprods{x^{k+1} - \hat{x}^k, x - \hat{x}^k} - \frac{1}{\beta_k}\norms{x^{k+1} - \hat{x}^k}^2 \vspace{1ex}\\
	& & + {~} \frac{\rho_k}{2}\norms{K(x^{k+1} - \hat{x}^k)}^2  - \frac{\rho_k}{2}\norms{K(x - \hat{x}^k) - (r - r^{k+1})}^2.
\end{array}{\!\!\!\!\!}
}
Since \eqref{eq:lm11_est1b2} holds for any $x \in \R^p$ and $r \in \R^n$, we can substitute $(x, r) := (x^k, r^k)$ to obtain
\myeqf{eq:lm11_est3}{
\arraycolsep=0.2em
{\!\!\!}\begin{array}{rl}
	\Lc_{\rho_k} & (x^{k+1}, r^{k+1}, \tilde{y}^k) \leq \Lc_{\rho_k}(x^k, r^k, \tilde{y}^k) + \frac{1}{\beta_k}\iprods{x^{k+1} - \hat{x}^k, x^k - \hat{x}^k} \vspace{1ex}\\
	& - {~} \frac{1}{\beta_k}\norms{x^{k+1} - \hat{x}^k}^2 + \frac{\rho_k}{2}\norms{K(x^{k\!+\!1} - \hat{x}^k)}^2 - \frac{\rho_k}{2}\norms{K(x^k - \hat{x}^k) - (r^k - r^{k\!+\!1})}^2.
\end{array}{\!\!\!}
}
Multiplying \eqref{eq:lm11_est1b2} by $\tau_k$ and \eqref{eq:lm11_est3} by $1-\tau_k$, summing up the results, then utilizing $\hat{x}^k = (1-\tau_k)x^k + \tau_k\tilde{x}^k$ from \eqref{eq:pd_scheme1}, we get
\myeqf{eq:lm11_est4}{
\arraycolsep=0.3em
{\!\!\!}\begin{array}{rl}
	\Lc_{\rho_k} & (x^{k+1}, r^{k+1}, \tilde{y}^k) \leq (1-\tau_k)\Lc_{\rho_k}(x^k, r^k, \tilde{y}^k) + \tau_k\Lc_{\rho_k}(x, r, \tilde{y}^k)\vspace{1ex}\\
	& + {~} \frac{\tau_k}{\beta_k}\iprods{x^{k+1} - \hat{x}^k, x - \tilde{x}^k} - \frac{1}{\beta_k}\norms{x^{k+1} - \hat{x}^k}^2 + \frac{\rho_k}{2}\norms{K(x^{k+1} - \hat{x}^k)}^2\vspace{1ex}\\
	&  - {~} \frac{\tau_k \rho_k}{2}\norms{K(\hat{x}^k - x) - (r^{k+1} - r)}^2 - \frac{(1 - \tau_k)\rho_k}{2}\norms{K(x^k - \hat{x}^k) - (r^k - r^{k+1})}^2.
\end{array}{\!\!\!}
}
By the definition of $\tilde{y}^{k+1}$ from \eqref{eq:pd_scheme1}, we have
\myeqf{eq:lm11_est7}{
\arraycolsep=0.3em
{\!\!\!}\begin{array}{rl}
	\Lc_{\rho_k}\!& (x^{k+1}, r^{k+1},  y) - (1-\tau_k)\Lc_{\rho_{k}}(x^k, r^k, y) = \Lc_{\rho_k}(x^{k+1}, r^{k+1}, \tilde{y}^k) \vspace{1.25ex}\\
	& - {~} (1-\tau_k)\Lc_{\rho_{k}}(x^k, r^k, \tilde{y}^k) + \frac{1}{ 2\eta_k}\left[\norms{\tilde{y}^k - y}^2 - \norms{\tilde{y}^{k + 1} - y}^2 + \norms{\tilde{y}^{k+1} - \tilde{y}^k}^2\right].
\end{array}{\!\!\!}
}
Substituting the expression \eqref{eq:lm11_est7} into \eqref{eq:lm11_est4}, we get
\myeqf{eq:T2_proofmelody1}{
\hspace{-1ex}\arraycolsep=0.2em
{\!\!\!}\begin{array}{rl}
	\Lc_{\rho_k} (x^{k + 1}, & r^{k + 1}, y) \leq (1 - \tau_k)\Lc_{\rho_k} (x^k, r^k, y)\hfill (=: \Tc_1)\vspace{1ex}\\
	& + {~} \tau_k \Lc_{\rho_k} (x, r, \tilde{y}^k) - \tfrac{\tau_k \rho_k}2 \norms{K\hat{x}^k - r^{k + 1} - (Kx - r)}^2\hfill (=: \Tc_2)\vspace{1ex}\\
	& + {~} \tfrac{\tau_k}{\beta_k} \iprods{x^{k + 1} - \hat{x}^k, x - \tilde{x}^k} - \tfrac1{2\beta_k} \norms{x^{k + 1} - \hat{x}^k}^2\hfill (=: \Tc_3)\vspace{1ex}\\
	& - {~} \tfrac1{2\beta_k} \norms{x^{k + 1} - \hat{x}^k}^2 + \tfrac1{2\eta_k} \left[\norms{\tilde{y}^k - y}^2 - \norms{\tilde{y}^{k + 1} - y}^2 + \norms{\tilde{y}^{k + 1} - \tilde{y}^k}^2\right]\vspace{1ex}\\
	& + {~} \tfrac{\rho_k}2 \norms{K(x^{k + 1} - \hat{x}^k)}^2 - \tfrac{(1 - \tau_k)\rho_k}2 \norms{K(x^k - \hat{x}^k) - (r^k - r^{k + 1})}^2.
\end{array}{\!\!}
}
Now, we  estimate three terms $\Tc_1$, $\Tc_2$, and $\Tc_3$ in \eqref{eq:T2_proofmelody1} as follows. 
First, we have
\myeqf{eq:Lemma2_Tc1}{
	\Tc_1 = (1 - \tau_k)\left[\Lc_{\rho_{k - 1}} (x^k, r^k, y) + \frac{(\rho_k - \rho_{k - 1})}{2} \norms{Kx^k - r^k}^2\right].
}
By the definitions of $\bar{y}^{k + 1}$ in Lemma~\ref{le:key_bound1} and of $\Lc$, we can show that
\myeqf{eq:T2_proof5}{
	\Tc_2 = \Lc(x, r, \bar{y}^{k+1}) - (1-\tau_k)\Lc(x, r, \bar{y}^k) -\frac{\tau_k\rho_k}{2}\norms{K\hat{x}^k - r^{k+1}}^2.
}
Moreover, by the update $\tilde{x}^{k+1}  := \tilde{x}^k  + \frac{1}{\tau_k}(x^{k+1} - \hat{x}^k)$ in \eqref{eq:pd_scheme1}, we can further derive
\myeqfn{
	\Tc_3 = \frac{\tau_k^2}{2\beta_k}\left[ \norms{\tilde{x}^k - x}^2 - \norms{\tilde{x}^{k+1} - x}^2\right].
}
Substituting these three terms $\Tc_1$, $\Tc_2$, and $\Tc_3$ back into \eqref{eq:T2_proofmelody1}, we obtain
\myeqfn{
\arraycolsep=0.3em
{\!\!\!}\begin{array}{lcl}
	\Lc_{\rho_k} (x^{k + 1}, r^{k + 1}, y) & \leq & (1 - \tau_k)\Lc_{\rho_{k - 1}} (x^k, r^k, y) + \frac{(1 - \tau_k)(\rho_k - \rho_{k - 1})}2 \norms{Kx^k - r^k}^2 \vspace{1ex}\\
	& & + {~} \Lc(x, r, \bar{y}^{k + 1}) - (1 - \tau_k)\Lc(x, r, \bar{y}^k) - \frac{\tau_k \rho_k}2 \norms{K\hat{x}^k - r^{k + 1}}^2 \vspace{1ex}\\
	& & + {~} \frac{\tau_k^2}{2\beta_k} \left[ \norms{\tilde{x}^k - x}^2 - \norms{\tilde{x}^{k + 1} - x}^2\right]  - \frac{1}{2\beta_k} \norms{x^{k + 1} - \hat{x}^k}^2 \vspace{1ex}\\
	& & + {~} \frac{1}{2\eta_k} \left[ \norms{\tilde{y}^k - y}^2 - \norms{\tilde{y}^{k + 1} - y}^2\right] + \frac{1}{2\eta_k} \norms{\tilde{y}^{k + 1} - \tilde{y}^k}^2 \vspace{1ex}\\
	& & + {~} \frac{\rho_k}2 \norms{K(x^{k + 1} - \hat{x}^k)}^2 - \frac{(1 - \tau_k)\rho_k}{2} \norms{K(x^k - \hat{x}^k) - (r^k - r^{k + 1})}^2,
\end{array}{\!\!\!}
}
which, after rearrangement, becomes
\myeqf{eq:Lemma2_estimate_melody2}{
\arraycolsep=0.2em
{\!\!\!\!\!}\begin{array}{rl}
	\Lc_{\rho_k} & (x^{k + 1}, r^{k + 1}, y) - \Lc(x, r, \bar{y}^{k + 1}) \leq (1 - \tau_k)\left[\Lc_{\rho_{k - 1}} (x^k, r^k, y) - \Lc(x, r, \bar{y}^k)\right]\vspace{1ex}\\
	& + {~} \frac{\tau_k^2}{2\beta_k} \left[ \norms{\tilde{x}^k - x}^2 - \norms{\tilde{x}^{k + 1} - x}^2\right]  - \frac{1}{2\beta_k} \norms{x^{k + 1} - \hat{x}^k}^2 + \frac{\rho_k}2 \norms{K(x^{k + 1} - \hat{x}^k)}^2\vspace{1ex}\\
	& + {~} \frac{1}{2\eta_k} \left[ \norms{\tilde{y}^k - y}^2 - \norms{\tilde{y}^{k + 1} - y}^2\right] + \frac{1}{2\eta_k} \norms{\tilde{y}^{k + 1} - \tilde{y}^k}^2  + \Tc_4,
\end{array}{\!\!\!\!\!}
}
where
\myeqf{eq:lm11_est8}{
\arraycolsep=0.3em
\hspace{-1ex}\begin{array}{lrl}
	\Tc_4 & := & \frac{(1-\tau_k)}{2}\left( \rho_k - \rho_{k-1} \right)\norms{Kx^k - r^k}^2 - \frac{\tau_k \rho_k}{2}\norms{K\hat{x}^k - r^{k+1}}^2\vspace{1ex}\\
	& & - {~} \frac{(1-\tau_k)\rho_k}{2}\norms{K(x^k - \hat{x}^k) - (r^k - r^{k+1})}^2\vspace{1ex}\\
	& = & - \frac{\rho_k}{2}\norms{(K\hat{x}^k\!- r^{k+1}) - (1-\tau_k)(Kx^k- r^k)}^2 \vspace{1ex}\\
	& & - {~} \frac{(1-\tau_k)}{2}\left[\rho_{k-1}-(1-\tau_k)\rho_k\right] \norms{Kx^k\!-\!r^k}^2.
\end{array}
\hspace{-2ex}
}
Using Lemma \ref{lm:facts}(b) with $u := Kx^{k + 1} - r^{k + 1}$, $v := K\hat{x}^k - r^{k + 1}$, $w := (1 - \tau_k)(Kx^k - r^k)$, $\alpha_1 := \eta_k/2$ and $\alpha_2 := -\rho_k/2$, and $\tilde{y}^{k + 1}$ from \eqref{eq:pd_scheme1} with $\rho_k > \eta_k$, we can show that
\myeqf{eq:lm11_est9}{
\small
\frac{1}{2\eta_k}\norms{\tilde{y}^{k + 1} {\!} - \tilde{y}^k}^2 - \frac{\rho_k}{2}\norms{(K\hat{x}^k - r^{k+1}) - (1-\tau_k)(Kx^k - r^k)}^2 \leq \frac{\rho_k\eta_k}{2(\rho_k - \eta_k)}\norms{K(x^{k+ 1}{\!\!} - \hat{x}^k)}^2.{\!\!\!\!\!\!\!\!\!}
}
Substituting this estimate and \eqref{eq:lm11_est8} into \eqref{eq:Lemma2_estimate_melody2}, we finally arrive at \eqref{eq:key_bound1}.
\proofbox

\beforesubsec
\subsection{\bf The proof of Theorem~\ref{th:convergence_guarantee1}: $\BigO{1/k}$ convergence rates when $c = 1$}\label{apdx:th:convergence_guarantee1}
\aftersubsec
Using the parameter update rule \eqref{eq:update_rule1} with $c := 1$, we can easily verify that
\myeqfn{
	\frac{\tau_k^2}{\beta_k} = \frac{(1 - \tau_k)\tau_{k - 1}^2}{\beta_{k - 1}},~~ \frac{1}{\eta_k} = \frac{1 - \tau_k}{\eta_{k - 1}}, ~~ \frac{1}{\beta_k} - \frac{\rho_k^2 \norms{K}^2}{\rho_k - \eta_k} = 0, 
	~~\text{and}~~  \rho_{k - 1} - (1 - \tau_k)\rho_k = 0.
}
Applying these conditions to \eqref{eq:key_bound1} of Lemma \ref{le:key_bound1}, we can simplify it as
\myeqfn{
\hspace{-2ex}\arraycolsep=0.3em
\begin{array}{rl}
	\Lc_{\rho_k} & (x^{k + 1}, r^{k + 1}, y) - \Lc(x, r, \bar{y}^{k + 1}) + \frac{\tau_k^2}{2\beta_k} \norms{\tilde{x}^{k + 1} - x}^2 + \frac{1}{2\eta_k} \norms{\tilde{y}^{k + 1} - y}^2\vspace{1ex}\\
	&{~~} \leq  (1 - \tau_k)\left[\Lc_{\rho_{k - 1}} (x^k, r^k, y) - \Lc(x, r, \bar{y}^k) + \frac{\tau_{k - 1}^2}{2\beta_{k - 1}} \norms{\tilde{x}^k - x}^2 + \frac{1}{2\eta_{k - 1}} \norms{\tilde{y}^k - y}^2\right].
\end{array}
\hspace{-2ex}
}
By induction, this inequality implies
\myeqfn{
\arraycolsep=0.3em
\begin{array}{rl}
	\Lc_{\rho_{k - 1}} & (x^k, r^k, y) - \Lc(x, r, \bar{y}^k) \leq \left[\prod\nolimits_{i = 1}^{k - 1} {\!\!}(1 - \tau_i)\right] \times\vspace{1ex}\\
	& \left[(1 - \tau_0)\left(\Lc_{\rho_{-1}} (x^0, r^0, y) - \Lc(x, r, \bar{y}^0)\right) + \tfrac{\tau_0^2}{2\beta_0} \norms{\tilde{x}^0 - x}^2 + \tfrac{1}{2\eta_0} \norms{\tilde{y}^0 - y}^2\right].
\end{array}
}
If $c = 1$, then $\tau_0 = 1$,\, $\beta_0 = \gamma/\left(\norms{K}^2 \rho_0\right)$, and $\eta_0 = (1 - \gamma)\rho_0$. 
We also have $\Pi_{i = 1}^{k - 1} (1 - \tau_i) = \frac{1}{k}$, $\tilde{x}^0 = x^0$, and $\tilde{y}^0 = y^0$. 
Thus the last estimate can be simplified as
\myeqf{eq:thm1_proof0_telescope_b}{
	 \Lc_{\rho_{k - 1}} (x^k, r^k, y) - \Lc(x, r, \bar{y}^k) \leq  \frac{1}{2k} \left[ \frac{\rho_0\norms{K}^2\norms{x^0 - x}^2 }{\gamma}  + \frac{\norms{y^0 - y}^2}{(1-\gamma)\rho_0}\right]. 
}
Now, let pick any $\bar{r}^k \in \partial g^{\ast} (\bar{y}^k)$. 
Then, by \eqref{eq:lag_func12}, we easily get
\myeqf{eq:thm1_proof0_L}{
	\widetilde{\Lc}(x^k, y) - \widetilde{\Lc}(x, \bar{y}^k) \leq \Lc(x^k, r^k, y) - \Lc(x, \bar{r}^k, \bar{y}^k).
}
Combining \eqref{eq:thm1_proof0_telescope_b}, \eqref{eq:thm1_proof0_L}, and $\Lc(x^k, r^k, y) \leq \Lc_{\rho_{k - 1}} (x^k, r^k, y)$, we finally get \eqref{eq:convergence_bound1}.

(a) The estimate~\eqref{eq:primal_dual_bound1} directly follows from \eqref{eq:convergence_bound1} and the definition of $\Gc_{\Xc\times\Yc}$ in \eqref{eq:gap_func}.

(b)~By $M_g$-Lipschitz continuity of $g$, we have
\myeqf{eq:th31_proof2}{
\arraycolsep=0.3em
{\!\!\!}\begin{array}{lcl}
	F(x^k) - F^{\star} & \leq & f(x^k) + g(r^k) + M_g\norms{Kx^k - r^k} - F^{\star} \vspace{1ex}\\
	& = & f(x^k) + g(r^k) + \iprods{\breve{y}^k, Kx^k - r^k} - F^{\star}~~~\text{with}~\breve{y}^k := \frac{M_g(Kx^k - r^k)}{\norms{Kx^k - r^k}} \vspace{1ex}\\
	& \leq & \Lc_{\rho_{k - 1}}(x^k, r^k, \breve{y}^k) - \mathcal{L}(x^{\star}, r^{\star}, \bar{y}^k),
\end{array}{\!\!\!}
}
where we have used $\Lc(x^k, r^k, \breve{y}^k) = f(x^k) + g(r^k) + \iprods{Kx^k - r^k, \breve{y}^k} \leq \Lc_{\rho_{k-1}}(x^k, r^k, \breve{y}^k)$ and $F^{\star} = \Lc(x^{\star}, r^{\star}, \bar{y}^k)$ in the last inequality. 
Substituting $(x, r, y) := (x^{\star}, r^{\star}, \breve{y}^k)$ into \eqref{eq:thm1_proof0_telescope_b}, we have
\myeqfn{
	0 \leq F(x^k) - F^{\star} \leq \frac1{2k} \left[\frac{\rho_0 {\norms{K}}^2{\norms{x^0 - x^{\star}}}^2}{\gamma}  + \frac{ {\norms{y^0 - \breve{y}^k}^2}}{(1 - \gamma)\rho_0} \right].
}
By the definition of $\breve{y}^k$ in \eqref{eq:th31_proof2}, we have $\norms{y^0 - \breve{y}^k} \leq \sup_{y}\set{ \norms{y^0 - y} \mid \norm{y} \leq M_g} =: D_g$.
Using this estimate into the last inequality, we obtain \eqref{eq:primal_bound1}.

(c)~For any $x \in \R^p$ and $r\in\R^n$, we have
\myeqfn{
\arraycolsep=0.3em
\begin{array}{lcl}
	\Lc(x, r, \bar{y}^k) & = & f(x) - \iprods{-K^{\top}\bar{y}^k, x}  + g(r) - \iprods{\bar{y}^k, r} \vspace{1ex}\\
	& \geq & -\sup_{x}\big\{ \iprods{-K^{\top}\bar{y}^k, x} - f(x) \big\} - \sup_r\big\{\iprods{\bar{y}^k, r} - g(r) \big\} \vspace{1ex}\\
	& = & -f^{\ast}(-K^{\top}\bar{y}^k) - g^{\ast}(\bar{y}^k) = -G(\bar{y}^k).
\end{array}
}
Let $\breve{x}^k \in \partial {f^{\ast}}(-K^{\top}\bar{y}^k)$ and $\breve{r}^k \in \partial {g^{\ast}}(\bar{y}^k)$.
Then, it is clear that the above inequality holds as equality with $(x, r) := (\breve{x}^k, \breve{r}^k)$. 
We further have
\myeqfn{
	G(\bar{y}^k) - G^{\star} = F^{\star} - \Lc(\breve{x}^k, \breve{r}^k, \bar{y}^k) \leq  \Lc(x^k, r^k, y^{\star}) - \Lc(\breve{x}^k, \breve{r}^k, \bar{y}^k),
}
since $F^{\star} + G^{\star} = 0$ and $F^{\star} \leq \Lc(x^k, r^k, y^{\star})$. 
Combining this inequality and \eqref{eq:thm1_proof0_telescope_b} yields
\myeqfn{
	0 \leq G(\bar{y}^k) - G^{\star} \leq \frac1{2k} \left[\frac{\rho_0 \norms{K}^2 \norms{x^0 - \breve{x}^k}^2}\gamma + \frac{\norms{y^0 - y^\star}^2}{(1 - \gamma)\rho_0}\right].
}
If $f^*$ is $M_{f^{\ast}}$-Lipschitz continuous, then $\norms{\breve{x}^k - x^0} \leq \sup_x\set{ \norms{x^0 - x} \mid \norms{x} \leq M_{f^{\ast}}} =: D_{f^{\ast}}$. 
Substituting this estimate into the last inequality, we prove \eqref{eq:dual_bound1}.
\proofbox

\beforesubsec
\subsection{\bf The proof of Theorem~\ref{th:faster_convergence_rate}: $\BigO{1/k}$ and $\underline{o}\big(1/(k\sqrt{\log{k}})\big)$ convergence rates when $c > 1$}\label{apdx:th:faster_convergence_rate}
\aftersubsec
Let us first abbreviate $a^2_k := \frac{\rho_0}2 \norms{Kx^k - r^k}^2$, $b_k^2 := \frac{\rho_0\norms{K}^2}{2\gamma}\norms{\tilde{x}^{k} - x^{\star}}^2 + \frac{1}{2(1-\gamma)\rho_0}\norms{\tilde{y}^k - y^\star}^2$, and $\widetilde{\Gc}_k := \Lc(x^{k}, r^k, y^{\star}) - \Lc(x^{\star}, r^{\star}, \bar{y}^{k})$.
Since $(x^{\star}, r^{\star}, y^{\star})$ is a saddle-point of $\Lc$, we have  $\widetilde{\Gc}_k \equiv \Lc(x^k, r^k, y^{\star}) - F^{\star} \geq 0$. 
Using the update rules of parameters at Step~\ref{step:i1} of Algorithm~\ref{alg:A1}, we can derive from \eqref{eq:key_bound1} of Lemma \ref{le:key_bound1} that
\myeqfn{
	\widetilde{\Gc}_{k+1} + \frac{k + c}c a^2_{k+1} \leq \frac{k}{k+c}\Big(\widetilde{\Gc}_k + \frac{k+c-1}c a^2_k \Big) + \frac{c}{k+c}(b_k^2 - b_{k+1}^2) - \frac{(c-1)k}{c(k+c)}a^2_k.
}
Rearranging this estimate, we obtain
\myeqf{eq:th2_proof1}{
\arraycolsep=0.2em
{\!\!\!\!}\begin{array}{rl}
 	(c - 1)\!& \Big(\widetilde{\Gc}_k + \frac{k + c - 1}c a^2_k \Big) \leq (c-1)\Big(\widetilde{\Gc}_k + \frac{2k + c - 1}c a^2_k \Big) \vspace{1ex}\\
	\leq & \Big[(k+c-1)\widetilde{\Gc}_k + \frac{(k+c-1)^2}c a^2_k + c b^2_k \Big] - \left[ (k+c)\widetilde{\Gc}_{k+1} + \frac{(k+c)^2}c a^2_{k+1} + c b^2_{k+1} \right].
\end{array}
\hspace{-3ex}
}
Clearly, the estimate \eqref{eq:th2_proof1} implies
\myeqfn{
	(k + c)\widetilde{\Gc}_{k + 1} + \frac{{(k + c)}^2}c a_{k + 1}^2 + cb_{k + 1}^2 \leq (k + c - 1)\widetilde{\Gc}_k + \frac{{(k + c - 1)}^2}c a_k^2 + cb_k^2.
}
By induction and the definition of $\widetilde{\Gc}_k$, we can easily show from the last estimate that
\myeqf{eq:th2_proof_melodyadd1}{
\arraycolsep=0.3em
\begin{array}{lcl}
	\widetilde{\Lc}(x^k, y^\star) - F^\star & \leq & \Lc(x^k, r^k, y^\star) - F^\star = \widetilde{\Gc}_k \vspace{1ex}\\
	& \leq & \frac{1}{k + c - 1} \left[(c - 1)\widetilde{\Gc}_0 + \frac{{(c - 1)}^2}c a_0^2 + cb_0^2\right] = \frac{R_0^2}{k + c - 1}.
\end{array}
}
By the definition of $R_0^2$ in the statement of Theorem \ref{th:faster_convergence_rate}, and the fact that $Kx^0 - r^0 = 0$ from the initialization step of Algorithm \ref{alg:A1}, we have proved the first assertion of \eqref{eq:primal_dual_bound1_new}. 

Summing up \eqref{eq:th2_proof1} from $i := 0$ to $i := k$, we get
\myeqf{eq:key_bound1_pd2d}{
\arraycolsep=0.3em
{\!\!\!\!}\begin{array}{lcl}
	(c-1)\sum_{i=0}^k\left[\widetilde{\Gc}_i + \frac{(i + c - 1)}c a^2_i\right] & \leq & \left[(c-1)\widetilde{\Gc}_0 + \frac{(c-1)^2}2 a^2_0 + cb_0^2\right]\vspace{1ex}\\
	& & - \left[ (k+c)\widetilde{\Gc}_{k+1} + \frac{(k+c)^2}c a^2_{k+1} + cb^2_{k+1} \right] \vspace{1ex}\\
	& \leq & (c-1)\widetilde{\Gc}_0 + \frac{(c-1)^2}c a^2_0 + cb_0^2 < +\infty.
\end{array}{\!\!\!\!}
}
Since $c - 1 > 0$, $i \leq i + c-1$, and $\widetilde{\Gc}_i \geq 0$, applying Lemma \ref{lm:facts}(c) to \eqref{eq:key_bound1_pd2d}, we get
\myeqf{eq:th2_proof_liminf}{
	\liminf_{k \to \infty} \, k\log{(k)}\left[ \widetilde{\Gc}_k + \frac{ka_k^2}{c}\right] = 0.
}	
In particular, since $0 \leq \widetilde{\Lc}(x^k, y^\star) - F^\star \leq \Lc(x^k, r^k, y^\star) - F^\star = \widetilde{\Gc}_k$, we have proved $\liminf_{k \to \infty} \, k\log{(k)}\big[\widetilde{\Lc}(x^k, y^\star) - F^\star\big] = 0$, which is the second assertion of \eqref{eq:primal_dual_bound1_new}.

Analogous to \eqref{eq:th2_proof_melodyadd1}, we can show that
\myeqf{eq:thm_proof_Kx-r}{
	\norms{Kx^k - r^k} \leq \frac{\sqrt{2c/\rho_0} R_0}{k + c - 1}.
}
By the $M_g$-Lipschitz continuity of $g$, similar to \eqref{eq:th31_proof2}, we can show that
\myeqf{eq:th2_proof5}{
	0 \leq F(x^k) - F^{\star} \leq \Lc(x^k, r^k, y^{\star}) - F^{\star} + (\norms{y^{\star}} + M_g)\norms{Kx^k - r^k}.
}
Combining \eqref{eq:th2_proof_melodyadd1}, \eqref{eq:thm_proof_Kx-r}, and \eqref{eq:th2_proof5}, we get the first assertion of \eqref{eq:primal_bound1_new2}. 
Moreover, applying Lemma~\ref{lm:facts}(d, part (i)) with $u_k := \widetilde{\Gc}_k \geq 0$, $v_k := \norms{Kx^k - r^k}$, $\alpha_1 := \frac{\rho_0}{2c}$, and $\alpha_2 := \norms{y^{\star}} + M_g$ to \eqref{eq:th2_proof_liminf}, we can show that 
\myeqf{eq:th2_proof_liminf2}{
	\liminf_{k\to\infty} \, k\sqrt{\log{k}}\left[\left(\Lc(x^k, r^k, y^\star) - F^\star\right) + (\norms{y^{\star}} + M_g)\norms{Kx^k - r^k}\right] = 0.
}
Furthermore, using the limit \eqref{eq:th2_proof_liminf2} in \eqref{eq:th2_proof5}, we obtain the second assertion of \eqref{eq:primal_bound1_new2}.
\proofbox

\beforesubsec
\subsection{\bf The proof of Corollary~\ref{th:constr_alg_convergence}: Constrained problems}\label{apdx:th:constr_alg_convergence}
\aftersubsec
From \eqref{eq:constr_alg1}, we can write down the optimality condition of $x^{k + 1}$ as
\myeqf{eq:cor_constr_1}{
	0 \in \partial f(x^{k + 1}) + K^\top y^{k + 1} + \nabla\psi(\hat{x}^k) + \tfrac{1}{\beta_k} (x^{k + 1} - \hat{x}^k).
}
By convexity of $f$ and $L_\psi$-smoothness of $\psi$, for any $x \in \R^p$, we have
\myeqf{eq:cor_constr_1b}{
	\psi(x^{k + 1}) \leq \psi(x) + \iprods{\nabla\psi(\hat{x}^k), x^{k + 1} - x} + \frac{L_\psi}2 \norms{x^{k + 1} - \hat{x}^k}^2.
}
Combining \eqref{eq:lm11_est1b}, \eqref{eq:lm11_melodyref1}, \eqref{eq:cor_constr_1}, and \eqref{eq:cor_constr_1b} with $r = r^{k + 1} = b$, for any $x\in\R^p$, we can derive
\myeqfn{
\arraycolsep=0.3em
{\!\!\!\!\!\!}\begin{array}{lcl}
	\Lc_{\rho_k}(x^{k+1}, \tilde{y}^k) & = & f(x^{k + 1}) + \psi(x^{k + 1}) + \iprods{\tilde{y}^k, Kx^{k + 1} - b} + \frac{\rho_k}2 \norms{Kx^{k + 1} - b}^2\vspace{1.25ex}\\
	& \leq & \Lc_{\rho_k}(x, \tilde{y}^k) + \frac{1}{\beta_k}\iprods{ x^{k+1} - \hat{x}^k,  x - \hat{x}^k} - \frac{1}{\beta_k}\norms{x^{k+1} - \hat{x}^k}^2 \vspace{1.25ex}\\
	& & + {~} \frac{\rho_k}{2}\norms{K(x^{k+1} {\!\!} - \hat{x}^k)}^2  - \frac{\rho_k}{2}\norms{K(x - \hat{x}^k)}^2 + \frac{L_\psi}2 \norms{x^{k + 1} - \hat{x}^k}^2.
\end{array}{\!\!\!\!\!\!}
}
Analogous to the proof for Lemma \ref{le:key_bound1} but using the last estimate, we can show that
\myeqf{eq:th3_proof_est4}{
\arraycolsep=0.3em
\hspace{-4ex}\begin{array}{rl}
	\Lc_{\rho_k}{\!\!\!\!} & (x^{k+1}, y) - \Lc(x, \bar{y}^{k + 1}) \leq (1-\tau_k)\left[ \Lc_{\rho_{k-1}}(x^k, y) - \Lc(x, \bar{y}^k)\right]\vspace{1ex}\\
	& + {~} \tfrac{\tau_k^2}{2\beta_k}\left[ \norms{\tilde{x}^k - x}^2 - \norms{\tilde{x}^{k+1} - x}^2\right] + \tfrac{1}{2\eta_k} \left[\norms{\tilde{y}^k - y}^2 - \norms{\tilde{y}^{k+1} - y}^2\right]\vspace{1ex}\\
	& - {~} \tfrac{1}{2}\left(\tfrac{1}{\beta_k} - L_\psi - \tfrac{\rho_k^2\norm{K}^2}{\rho_k - \eta_k}\right)\norms{x^{k+1} {\!\!} - \hat{x}^k}^2 - \frac{(1-\tau_k)}{2}\left[\rho_{k - 1} - (1 - \tau_k)\rho_k\right] \norms{Kx^k - b}^2.
\end{array}
\hspace{-4ex}
}
Using the update \eqref{eq:update_rule1} with $c := 1$ and $\beta_k := \gamma/(\norms{K}^2 \rho_k + \gamma L_\psi)$, for any $x \in \R^p$ and $y \in \R^n$, we follow the same lines as in the proof of Theorem \ref{th:convergence_guarantee1} to derive
\myeqf{eq:cor_constr_proof_L}{
	{\!\!\!\!}\Lc_{\rho_{k-1}}(x^k, y) -\Lc(x, \bar{y}^k)\!\leq\!\frac{1}{2k}\!\left[ \frac{(\rho_0\norms{K}^2+\gamma L_{\psi})}{\gamma}\norms{x^0 - x}^2 + \frac{\norms{y^0 - y}^2}{\rho_0(1-\gamma)} \right],{\!\!\!\!}
}
which implies
\myeqfn{
	F(x^k) + \iprods{y, Kx^k - b} + \frac{\rho_{k-1}}{2}\norms{Kx^k - b}^2 - F^{\star} \leq \frac{R_0^2 (y)}{2k},~~~\forall y\in\R^n,
}
where $R_0^2 (y) := \frac{(\rho_0\norms{K}^2 + \gamma L_{\psi})}{\gamma}\norms{x^0 - x^{\star}}^2 + \frac{1}{\rho_0(1-\gamma)}\norms{y^0 - y}^2$. 
For any $\lambda > 0$, the last inequality leads to
\myeqfn{
	F(x^k) - F^{\star} + \lambda\norms{Kx^k - b} + \frac{\rho_{k - 1}}{2}\norms{Kx^k - b}^2 \leq \frac{1}{2k} \sup\set{R_0^2 (y) \mid \norms{y} \leq \lambda} =: \frac{R_0^2}{2k}.
}
On the other hand, we have $F(x^k) - F^\star \geq  - \iprods{y^{\star}, Kx^k - b} \geq -\norm{y^{\star}}\norms{Kx^k - b}$. 
Combining these expressions, we obtain 
\myeqfn{
\arraycolsep=0.2em
\left\{\begin{array}{lcl}
	\big(\lambda - \norms{y^{\star}}\big)\norms{Kx^k - b} +  \frac{\rho_{k - 1}}{2}\norms{Kx^k - b}^2 &\leq & \frac{R_0^2}{2k},\vspace{1.5ex}\\
	-\norm{y^{\star}}\norms{Kx^k - b} \leq F(x^k) - F^{\star} &\leq & \frac{R_0^2}{2k}.
\end{array}\right.
}
Choosing $\lambda := 2\norm{y^{\star}} + 1$, and noting that $\sup\set{\norms{y^0 - y}^2 \mid \norms{y} \leq \lambda} = \big(\lambda + \norms{y^0} \big)^2 = \big(2\norm{y^{\star}} + \norms{y^0} + 1\big)^2$, we obtain  \eqref{eq:constr_alg_convergence1} from the last expression.

Next, let $\breve{x}^k \in \partial{F^{\ast}}(-K^{\top}\bar{y}^k)$, we have
\myeqfn{
\arraycolsep=0.2em
\begin{array}{lcl}
	G(\bar{y}^k) - G^{\star} &= & \sup_x\set{\iprods{-K^{\top}\bar{y}^k, x} - f(x) - \psi(x)} + \iprods{b, \bar{y}^k} + F^{\star} \vspace{1.5ex}\\
	& \leq & \Lc(x^k, y^{\star}) - f(\breve{x}^k) - \psi(\breve{x}^k) - \iprods{K\breve{x}^k - b, \bar{y}^k} = \Lc(x^k, y^{\star}) - \Lc(\breve{x}^k, \bar{y}^k).
\end{array}
}
Since $\dom{F}$ is bounded, we have $\norms{\breve{x}^k - x^0}  \leq \sup\set{\norms{x - x^0} \mid x\in\dom{F}} =: \Dc_{F}$.
Plugging these two last inequalities in \eqref{eq:cor_constr_proof_L}, we finally obtain \eqref{eq:constr_alg_convergence1_dual}.

For $c > 1$, by the same proof as of \eqref{eq:th2_proof_liminf2} but with $\alpha_2 := \norms{y^{\star}} + 1$, we get 
\myeqfn{
	\liminf_{k\to\infty}\, k\sqrt{\log{k}}\big[ \widetilde{\Gc}_k + (\norms{y^{\star}} + 1)\norms{Kx^k - b}\big] = 0.
}
In addition, from the proof of Theorem~\ref{th:faster_convergence_rate}, we have $F(x^k) - F^{\star} + \iprods{y^{\star}, Kx^k - b} = \Lc(x^k, y^{\star}) - F^{\star} =:  \widetilde{\Gc}_k \geq 0$.
Moreover, $F(x^k) - F^{\star} \geq -\norms{y^{\star}}\norms{Kx^k - b}$.
Combining these inequalities, we can  show that $\vert F(x^k) - F^{\star}\vert \leq \widetilde{\Gc}_k + \norms{y^{\star}}\norms{Kx^k - b}$.
Consequently, we obtain \eqref{eq:constr_alg_convergence_faster1} by combining this inequality and the last limit.
\proofbox

\beforesec
\section{Technical proofs in Section~\ref{sec:strong_convexity}: Strongly convex case}\label{apdx:proofs2}
\aftersec
This appendix provides the full proof of technical results in Section~\ref{sec:strong_convexity}.

\beforesubsec
\subsection{\bf The proof of Lemma~\ref{le:key_est_scvx2}: One-iteration analysis}\label{apdx:le:key_est_scvx2}
\aftersubsec
First, we write down the optimality conditions of the updates of $r^{k + 1}$ and $\tilde{x}^{k + 1}$ in \eqref{eq:pd_scheme3} as
\myeqf{eq:lm_semistr_opt_cond}{
\arraycolsep=0.3em
\left\{\begin{array}{lcl}
	0 & \in & \partial g(r^{k + 1}) + \nabla_r \phi_{\rho_k} (\hat{x}^k, r^{k + 1}, \tilde{y}^k),\vspace{1.5ex}\\
	0 & \in & \partial f(\tilde{x}^{k + 1}) + \nabla_x \phi_{\rho_k} (\hat{x}^k, r^{k + 1}, \tilde{y}^k) + \frac{\tau_k}{\beta_k} (\tilde{x}^{k + 1} - \tilde{x}^k).
\end{array}\right.
}
Let us denote $\breve{x}^{k+1} := (1-\tau_k)x^k + \tau_k\tilde{x}^{k+1}$. 
Then, by convexity of $g$ and strong convexity of $f$ with a strong convexity parameter $\mu_f > 0$, we can derive
\myeqf{eq:lm_semistr_cvx_funcs}{
\arraycolsep=0.3em
\left\{\begin{array}{lcl}
	g(r^{k + 1}) & \leq & (1 - \tau_k)g(r^k) + \tau_k g(r) + \iprods{\nabla g(r^{k + 1}), r^{k + 1} - (1 - \tau_k)r^k - \tau_k r},\vspace{1.5ex}\\
	f(\breve{x}^{k + 1}) & \leq & (1 - \tau_k)f(x^k) + \tau_k f(x) + \tau_k \iprods{\nabla f(\tilde{x}^{k + 1}), \tilde{x}^{k + 1} - x}\vspace{1ex}\\
	& & - {~} \frac{\tau_k \mu_f}2 \norms{\tilde{x}^{k + 1} - x}^2 - \frac{\tau_k (1 - \tau_k)\mu_f}2 \norms{\tilde{x}^{k + 1} - x^k}^2,
\end{array}\right.
}
where $\nabla g(r^{k + 1}) \in \partial g(r^{k + 1})$ and $\nabla f(x^{k + 1}) \in \partial f(x^{k + 1})$ are subgradients. 

\noindent Next, using Lemma \ref{lm:facts}(a) three times by setting $(x', r')$ as $(x^k, r^k)$, $(x^{k + 1}, r^{k + 1})$, and $(x,r)$, and $(x,r)$ as $(\hat{x}^k, r^{k+1})$, respectively, similar to \eqref{eq:lml1_melodyref1_prep}, we can eventually derive 
\myeqf{eq:lm_semistr_phi}{
\hspace{-4ex}\arraycolsep=0.05em
\begin{array}{rl}
	\phi_{\rho_k} &{\!\!}(x^{k + 1}, r^{k + 1}, \tilde{y}^k) = (1 - \tau_k)\phi_{\rho_k} (x^k, r^k, \tilde{y}^k) + \tau_k \phi_{\rho_k} (x, r, \tilde{y}^k)\vspace{1.2ex}\\
	&{~} + {~} \iprods{\nabla_x \phi_{\rho_k} (\hat{x}^k, r^{k + 1}, \tilde{y}^k),\ x^{k + 1} - (1 - \tau_k)x^k - \tau_k x}\vspace{1.2ex}\\
	&{~} + {~} \iprods{\nabla_r \phi_{\rho_k} (\hat{x}^k, r^{k + 1}, \tilde{y}^k),\ r^{k + 1} - (1 - \tau_k)r^k - \tau_k r}   + \frac{\rho_k}2 \norms{K(x^{k + 1} - \hat{x}^k)}^2 \vspace{1.2ex}\\
	&{~} \Big\{ - {~} \frac{(1 - \tau_k)\rho_k}2 \norms{K\hat{x}^k - r^{k + 1} {\!} - (Kx^k \!-\! r^k)}^2 - \!\frac{\tau_k \rho_k}2 \norms{K\hat{x}^k - r^{k + 1} {\!\!} - (Kx - r)}^2\Big\}_{[\Tc_1]},
\end{array}
\hspace{-4ex}
}
where we define the last line as $\Tc_1$. 

\noindent  Combining \eqref{eq:lm_semistr_opt_cond}, \eqref{eq:lm_semistr_cvx_funcs}, and \eqref{eq:lm_semistr_phi}, we get
\myeqf{eq:proof_semistr_L1}{
\hspace{-2ex}
\arraycolsep=0.3em
\begin{array}{rl}
	\Lc_{\rho_k} (x^{k + 1}, & r^{k + 1}, \tilde{y}^k) = f(x^{k + 1}) + g(r^{k + 1}) + \phi_{\rho_k} (x^{k + 1}, r^{k + 1}, \tilde{y}^k) \vspace{1ex}\\
	\stackrel{\eqref{eq:lm_semistr_opt_cond}-\eqref{eq:lm_semistr_phi}}\leq & (1 - \tau_k)\left[g(r^k) + \phi_{\rho_k} (x^k, r^k, \tilde{y}^k)\right] + \tau_k \left[g(r) + \phi_{\rho_k} (x, r, \tilde{y}^k)\right] + \Tc_1\vspace{2ex}\\
	& \left.\begin{array}{l}
		+ {~} \frac{\rho_k}2 \norms{K(x^{k + 1} - \hat{x}^k)}^2 + \iprods{\nabla_x \phi_{\rho_k} (\hat{x}^k, r^{k + 1}, \tilde{y}^k),\ x^{k + 1} - \hat{x}^k}\vspace{1.25ex}\\
		+ {~} f(x^{k + 1}) + \iprods{\nabla_x \phi_{\rho_k} (\hat{x}^k, r^{k + 1}, \tilde{y}^k), \hat{x}^k - (1 - \tau_k) x^k - \tau_k x}
\end{array}\right\}\hfill =: \Tc_2.
\end{array}
\hspace{-2ex}
}
To estimate $\Tc_2$, notice that by the optimality condition of the $x^{k + 1}$-update in \eqref{eq:pd_scheme3} and the $\mu_f$-strong convexity of $f$, we can show that
\myeqf{eq:lm_semistr_f_opt}{
\arraycolsep=0.3em
\begin{array}{rl}
	f(x^{k + 1}) & + {~} \frac{\rho_k \norms{K}^2}2 \norms{x^{k + 1} - \hat{x}^k}^2 + \iprods{\nabla_x \phi_{\rho_k} (\hat{x}^k, r^{k + 1}, \tilde{y}^k), x^{k + 1} - \hat{x}^k}\vspace{1ex}\\
	\leq & f(\breve{x}^{k + 1}) + \frac{\rho_k \norms{K}^2}2 \norms{\breve{x}^{k + 1} - \hat{x}^k}^2 + \iprods{\nabla_x \phi_{\rho_k} (\hat{x}^k, r^{k + 1}, \tilde{y}^k), \breve{x}^{k + 1} - \hat{x}^k}\vspace{1ex}\\
	& - {~} \frac{\rho_k \norms{K}^2 + \mu_f}2 \norms{\breve{x}^{k + 1} - x^{k + 1}}^2.
\end{array}
}
Using the above inequality as well as \eqref{eq:lm_semistr_opt_cond} and \eqref{eq:lm_semistr_cvx_funcs}, we can upper bound
\myeqf{eq:proof_semistr_T2}{
\arraycolsep=0.3em
\begin{array}{lcl}
	\Tc_2 {\!\!\!\!}& \stackrel{\eqref{eq:lm_semistr_f_opt}}\leq & f(\breve{x}^{k + 1}) + \frac{\rho_k \norms{K}^2}2 \norms{\breve{x}^{k + 1} - \hat{x}^k}^2 - \frac{\rho_k \norms{K}^2 + \mu_f}2 \norms{\breve{x}^{k + 1} - x^{k + 1}}^2\vspace{1ex}\\
	& & + {~} \iprods{\nabla_x \phi_{\rho_k} (\hat{x}^k, r^{k + 1}, \tilde{y}^k), \breve{x}^{k + 1} - (1 - \tau_k) x^k - \tau_k x}  \vspace{1ex}\\
	& \stackrel{\eqref{eq:lm_semistr_opt_cond}-\eqref{eq:lm_semistr_cvx_funcs}}\leq & (1 - \tau_k)f(x^k) + \tau_k f(x) + \frac{\tau_k^2}{\beta_k} \iprods{\tilde{x}^{k + 1} - \tilde{x}^k, x - \tilde{x}^{k + 1}}\vspace{1ex}\\
	& & + {~} \iprods{\nabla_x \phi_{\rho_k} (\hat{x}^k, r^{k + 1}, \tilde{y}^k), \breve{x}^{k + 1} - (1 - \tau_k)x^k - \tau_k \tilde{x}^{k + 1}} \vspace{1ex}\\
	& & - {~} \frac{\rho_k \norms{K}^2}2 \norms{\breve{x}^{k + 1} - \hat{x}^k}^2 - \frac{(\rho_k \norms{K}^2 + \mu_f)}{2} \norms{\breve{x}^{k + 1} - x^{k + 1}}^2 \vspace{1ex}\\
	& & - {~} \frac{\tau_k \mu_f}2 \norms{\tilde{x}^{k + 1} - x}^2 - \frac{\tau_k (1 - \tau_k)\mu_f}2 \norms{\tilde{x}^{k + 1} - x^k}^2\vspace{1ex}\\
	& = & (1 - \tau_k)f(x^k) + \tau_k f(x) + \frac{\tau_k^2}{2\beta_k} \norms{\tilde{x}^k - x}^2 - \frac{\tau_k (\tau_k + \beta_k \mu_f)}{2\beta_k} \norms{\tilde{x}^{k + 1} - x}^2\vspace{1ex}\\
	& & - {~} \frac{(1- \rho_k \beta_k \norms{K}^2)}{2\beta_k} \norms{\breve{x}^{k\!+\!1}\!-\!\hat{x}^k}^2\!-\!\frac{(\rho_k \norms{K}^2 + \mu_f)}{2}\norms{\breve{x}^{k\!+\!1}\!-\!x^{k\!+\!1}}^2 \vspace{1ex}\\
	&& - {~} \frac{\tau_k (1- \tau_k)\mu_f}2 \norms{\tilde{x}^{k\!+\!1}\!-\!x^k}^2,
	\end{array}
}
where we have used $\breve{x}^{k + 1} - \hat{x}^k = \tau_k (\tilde{x}^{k + 1} - \tilde{x}^k)$. 
Substituting \eqref{eq:proof_semistr_T2} into \eqref{eq:proof_semistr_L1}, we have
\myeqf{eq:proof_semistr_L2}{
\arraycolsep=0.3em
\begin{array}{rl}
	\Lc_{\rho_k} {\!\!\!\!}& (x^{k + 1}, r^{k + 1}, \tilde{y}^k) \leq (1 - \tau_k)\Lc_{\rho_k} (x^k, r^k, \tilde{y}^k) + \tau_k \Lc_{\rho_k} (x, r, \tilde{y}^k) + \Tc_1\vspace{1ex}\\
	& + {~} \frac{\tau_k^2}{2\beta_k} \norms{\tilde{x}^k - x}^2 - \frac{\tau_k (\tau_k + \beta_k \mu_f)}{2\beta_k} \norms{\tilde{x}^{k + 1} - x}^2 - \frac{\tau_k (1 - \tau_k)\mu_f}{2\beta_k} \norms{\tilde{x}^{k + 1} - x^k}^2\vspace{1ex}\\
	& - {~} \frac{(1 - \rho_k \beta_k\norms{K}^2)}{2\beta_k} \norms{\breve{x}^{k + 1} - \hat{x}^k}^2 - \frac{(\rho_k \norms{K}^2 + \mu_f)}{2} \norms{\breve{x}^{k + 1} - x^{k + 1}}^2.
\end{array}
}
By the definition of $\tilde{y}^{k + 1}$ from \eqref{eq:pd_scheme3} and that of $\bar{y}^{k + 1}$, the equations \eqref{eq:lm11_est7}, \eqref{eq:Lemma2_Tc1}, and \eqref{eq:T2_proof5} still hold. 
Substituting them into \eqref{eq:proof_semistr_L2}, and using the expression of $\Tc_1$, we get
\myeqf{eq:proof_semistr_L3}{
\arraycolsep=0.2em
{\!\!\!\!}\begin{array}{rl}
	\Lc_{\rho_k}(x^{k + 1}, &r^{k + 1}, y) - \Lc(x, r, \bar{y}^{k + 1}) \leq (1 - \tau_k)\left[\Lc_{\rho_{k - 1}}(x^k, r^k, y) - \Lc(x, r, \bar{y}^k)\right]\vspace{1.25ex}\\
	& \begin{rcases}
	+ {~}  \frac{(1 - \tau_k)}{2}\left(\rho_k - \rho_{k - 1}\right) \norms{Kx^k - r^k}^2 - \frac{\tau_k \rho_k}2 \norms{K\hat{x}^k - r^{k + 1}}^2\vspace{1ex}\\
	- {~} \frac{(1 - \tau_k)\rho_k}2 \norms{K\hat{x}^k - r^{k + 1} - (Kx^k - r^k)}^2 + \frac1{2\eta_k} \norms{\tilde{y}^{k + 1} - \tilde{y}^k}^2
	\end{rcases}\hfill =: \Tc_3\vspace{1.25ex}\\
	& - {~} \frac{(1 - \rho_k \beta_k \norms{K}^2)}{2\beta_k} \norms{\breve{x}^{k + 1} - \hat{x}^k}^2 - \frac{(\rho_k \norms{K}^2 + \mu_f)}{2} \norms{\breve{x}^{k + 1} - x^{k + 1}}^2\hfill =: \Tc_4\vspace{1.25ex}\\
	& + {~} \frac{\tau_k^2}{2\beta_k} \norms{\tilde{x}^k - x}^2 - \frac{\tau_k (\tau_k + \beta_k \mu_f)}{2\beta_k} \norms{\tilde{x}^{k + 1} - x}^2 - \frac{\tau_k (1 - \tau_k)\mu_f}{2\beta_k} \norms{\tilde{x}^{k + 1} - x^k}^2 \vspace{1ex}\\
	&  + {~}  \frac1{2\eta_k} \left[ \norms{\tilde{y}^k - y}^2 - \norms{\tilde{y}^{k + 1} - y}^2\right].
\end{array}{\!\!}
}
Since $\rho_k > \eta_k$, using the same lines as \eqref{eq:lm11_est8}-\eqref{eq:lm11_est9} in the proof of Lemma \ref{le:key_bound1}, we have
\myeqf{eq:proof_semistr_T4}{
	\Tc_3 \leq \frac{\rho_k \eta_k}{2(\rho_k - \eta_k)} \norms{K(x^{k + 1} - \hat{x}^k)}^2 - \frac{(1 - \tau_k)}{2}\left[\rho_{k - 1} - (1 - \tau_k)\rho_k\right] \norms{Kx^k - r^k}^2.
}
Applying Lemma \ref{lm:facts}(b) on $\Tc_4$ with $\alpha_1 := \frac{1- \rho_k \beta_k \norms{K}^2}{2\beta_k}$ and $\alpha_2 := \frac{\rho_k \norms{K}^2 + \mu_f}2$, and noting that $\rho_k\beta_k\norms{K}^2 < 1$, we can further show that
\myeqf{eq:proof_semistr_T5}{
	\Tc_4 \leq -\frac{\alpha_1 \alpha_2}{\alpha_1 + \alpha_2} \norms{x^{k + 1} - \hat{x}^k}^2 \leq - \frac{\rho_k}2 (1 - \rho_k \beta_k \norms{K}^2)\norms{K(x^{k + 1} - \hat{x}^k)}^2.
}
Substituting \eqref{eq:proof_semistr_T4} and \eqref{eq:proof_semistr_T5} into \eqref{eq:proof_semistr_L3}, we finally arrive at \eqref{eq:key_est_scvx2}.
\proofbox

\beforesubsec
\subsection{\bf The proof of Theorem~\ref{th:convergence_guarantee_scvx1}: $\BigO{1/k^2}$ convergence rates}\label{apdx:th:big_O_convergence_rates_scvx}
\aftersubsec
By the parameter update rule \eqref{eq:update_rule_scvx1}, we can easily verify that
\myeqfn{
\left\{\begin{array}{l}
	\frac{\tau_k^2}{2\beta_k} \leq \frac{(1 - \tau_k)\tau_{k - 1} (\tau_{k - 1} + \beta_{k - 1} \mu_f)}{2\beta_{k - 1}}, \qquad \frac1{2\eta_k} = \frac{1 - \tau_k}{2\eta_{k - 1}},\vspace{1.5ex}\\
	1 - \rho_k \beta_k \norms{K}^2 - \frac{\eta_k}{\rho_k - \eta_k} = \frac{(1 - \tau_k)}{2}\left[\rho_{k - 1} - (1 - \tau_k)\rho_k\right] = 0.
\end{array}\right.
}
Applying these conditions to Lemma \ref{le:key_est_scvx2}, we can simplify \eqref{eq:key_est_scvx2} as
\myeqfn{
\arraycolsep=0.3em
\begin{array}{rl}
	\Lc_{\rho_k} (x^{k + 1}, r^{k + 1}, y)\ - & \Lc(x, r, \bar{y}^{k + 1}) + \frac{\tau_k (\tau_k + \beta_k \mu_f)}{2\beta_k} \norms{\tilde{x}^{k + 1} - x}^2 + \frac1{2\eta_k} \norms{\tilde{y}^{k + 1} - y}^2\vspace{1ex}\\
	\leq & (1 - \tau_k) \Big[\Lc_{\rho_{k - 1}} (x^k, r^k, y) - \Lc(x, r, \bar{y}^k) \vspace{1ex}\\
	&{~~~~~~~~~~~~} + {~} \frac{\tau_{k - 1} (\tau_{k - 1} + \beta_{k - 1} \mu_f)}{2\beta_{k - 1}} \norms{\tilde{x}^k - x}^2 + \frac1{2\eta_{k - 1}} \norms{\tilde{y}^k - y}^2 \Big].
\end{array}
}
By induction, the above inequality implies
\myeqfn{
\hspace{-2ex}\arraycolsep=0.3em
\begin{array}{lcl}
	\Lc_{\rho_{k - 1}} (x^k, r^k, y) - \Lc(x, r, \bar{y}^k) & \leq & \left[\prod\limits_{i = 1}^{k - 1} (1 - \tau_i)\right] \Big[(1 - \tau_0)\left(\Lc_{\rho_{-1}} (x^0, r^0, y) - \Lc(x, r, \bar{y}^0)\right) \vspace{1ex}\\
	& & + {~} \frac{\tau_0^2}{2\beta_0} \norms{\tilde{x}^0 - x}^2 + \frac1{2\eta_0} \norms{\tilde{y}^0 - y}^2 \Big]\vspace{1ex}\\
	& = & \tau_{k - 1}^2 \left[ \frac{\tau_0^2}{2\beta_0} \norms{\tilde{x}^0 - x}^2 + \frac1{2\eta_0} \norms{\tilde{y}^0 - y}^2\right]\vspace{1ex}\\
	& \leq & \frac4{{(k + 1)}^2} \Big[ \frac{\rho_0 \norms{K}^2\norms{x^0 - x}^2}{2\Gamma}  + \frac{\norms{y^0 - y}^2}{2(1 - \gamma)\rho_0}\Big],
\end{array}
\hspace{-2ex}
}
where we have used $1 - \tau_k = \tau_k^2/\tau_{k - 1}^2$, $\tau_k \leq 2/(k + 2)$, and the parameter initialization \eqref{eq:update_rule_scvx_init}. 
The remaining conclusions of Theorem \ref{th:convergence_guarantee_scvx1} follow the same lines as the proof of Theorem \ref{th:convergence_guarantee1}.
Thus we omit the details here.
\proofbox

\beforesubsec
\subsection{\bf The proof of Theorem~\ref{th:small_o_convergence_rate_scvx}: $\BigO{1/k^2}$ and $\underline{o}\big(1/(k^2\sqrt{\log{k}})\big)$ convergence rates}\label{apdx:th:small_o_convergence_rate_scvx}
\aftersubsec
Similar to the proof of Theorem~\ref{th:faster_convergence_rate}, we abbreviate $a_k^2 := \frac{\rho_0}2 \norms{Kx^k - r^k}^2,\ b_k^2 := \frac{\rho_0 \norms{K}^2}{2\Gamma} \norms{\tilde{x}^k - x^\star}^2$, $d_k^2 := \frac1{2(1 - \gamma)\rho_0} \norms{\tilde{y}^k - y^\star}^2$, and $\widetilde{\Gc}_k := \Lc(x^k, r^k, y^{\star}) - \Lc(x^{\star}, r^{\star}, \bar{y}^{k}) \geq 0$. 
We can rewrite \eqref{eq:key_est_scvx2} in Lemma~\ref{le:key_est_scvx2} as follows:
\myeqfn{
\arraycolsep=0.2em
\begin{array}{lcl}
	\widetilde{\Gc}_{k + 1} + {\left(\frac{k + c}c\right)}^2 a_{k + 1}^2 & \leq & \frac{k}{k + c} \left[\widetilde{\Gc}_k + {\left(\frac{k + c - 1}c\right)}^2 a_k^2\right] + (b_k^2 - b_{k + 1}^2) 
	- \frac{c\Gamma\mu_f}{(k + c)\rho_0 \norms{K}^2} b_{k + 1}^2\vspace{1ex}\\
	& & + {~}  {\left(\frac{c}{k + c}\right)}^2 (d_k^2 - d_{k + 1}^2) - \frac{k}{c^2 (k + c)} \left[{(k + c - 1)}^2 - k(k + c)\right]a_k^2.
\end{array}
}
Multiplying both sides of this estimate by $(k+c)^2$ and rearranging the result, we get
\myeqf{eq:proof_str_o_pre_tele}{
\arraycolsep=0.2em
\begin{array}{lcl}
	R_{k + 1}^2 & := & {(k+c)}^2\widetilde{\Gc}_{k+1} + \frac{{(k+c)}^4}{c^2} a_{k+1}^2 + \left[{(k + c)}^2 + \frac{c(k + c)\Gamma\mu_f}{\rho_0 \norms{K}^2}\right]b_{k+1}^2 + c^2 d_{k+1}^2\vspace{1ex}\\
	& \leq & k(k+c)\widetilde{\Gc}_k + \frac{k^2 {(k+c)}^2}{c^2} a_k^2 + {(k+c)}^2 b_k^2 + c^2 d_k^2.
\end{array}
}
If $c > 2$ and $0 < \rho_0 \leq \frac{c(c - 1)\Gamma\mu_f}{(2c - 1)\norms{K}^2}$, then the above right-hand-side is bounded by $R_k^2$, and we have
\myeqf{eq:proof_str_o_tele}{
\arraycolsep=0.2em
{\!\!\!\!}\begin{array}{rl}
	(c - 2) & \Big[(k + c - 1)\widetilde{\Gc}_k + \frac{{(k + c - 1)}^3}{c^2} a_k^2\Big] \leq  \left[(c - 2)k + {(c - 1)}^2\right]\widetilde{\Gc}_k \vspace{1ex}\\
	& + {~} \frac{1}{c^2} \left[{(k + c - 1)}^4 - k^2 {(k + c)}^2\right]a_k^2\vspace{1ex}\\
	\leq & \left[{(k+c - 1)}^2\widetilde{\Gc}_k + \frac{{(k+c - 1)}^4}{c^2} a_k^2 + \left({(k + c - 1)}^2 + \frac{c(k + c - 1)\Gamma\mu_f}{\rho_0 \norms{K}^2}\right)b_k^2 + c^2 d_k^2\right]\vspace{1ex}\\
	& - \left[{(k+c)}^2\widetilde{\Gc}_{k+1} + \frac{{(k+c)}^4}{c^2} a_{k+1}^2 + \left({(k + c)}^2 + \frac{c(k + c)\Gamma\mu_f}{\rho_0 \norms{K}^2}\right)b_{k+1}^2 + c^2 d_{k+1}^2\right]\vspace{1ex}\\
	= & R_k^2 - R_{k + 1}^2.
\end{array}{\!\!\!\!}
}
By induction and the definitions of $\widetilde{\Gc}_k$ and $R_k^2$, we can show that
\myeqf{eq:th4_proof_melodyadd}{
	\widetilde{\Lc}(x^k, y^\star) - F^\star \leq \Lc(x^k, r^k, y^\star) - F^\star = \widetilde{\Gc}_k   \leq \frac{R_k^2}{{(k + c - 1)}^2} \leq \frac{R_0^2}{{(k + c - 1)}^2}.
}
By the initialization of Algorithm \ref{alg:A2}, we have proved the first assertion of \eqref{eq:primal_bound_scvx1b}. 

Summing up \eqref{eq:proof_str_o_tele} from $i := 0$ to $i := k$, we get
\myeqfn{
	(c - 2)\sum_{i = 0}^k \left[(i + c - 1)\widetilde{G}_i + \tfrac{{(i + c - 1)}^3}{c^2} a_i^2 \right] \leq R_0^2 - R_{k + 1}^2 \leq R_0^2 < +\infty.
}
Since $c - 2 > 0$ and $i \leq i + c - 1$, applying Lemma \ref{lm:facts}(c) to the last expression yields{\!\!\!}
\myeqfn{
	\liminf_{k \to \infty} \,  k^2\log{k} \left[ \widetilde{\Gc}_k + \frac{k^2 a_k^2}{c^2} \right] = 0.
}
Since $0 \leq \widetilde{\Lc}(x^k, y^\star) - F^\star \leq \Lc(x^k, r^k, y^{\star}) - F^{\star} = \widetilde{\Gc}_k$, we can easily obtain the second assertion of \eqref{eq:primal_bound_scvx1b} from this limit.

The remaining statements of Theorem \ref{th:small_o_convergence_rate_scvx} can be proved in a similar manner as of Theorem \ref{th:faster_convergence_rate}, but by applying Lemma~\ref{lm:facts}(d, part (ii)) to prove the limit in \eqref{eq:primal_bound_scvx1b_primal}. Thus we omit the details here.
\proofbox

\beforesubsec
\subsection{\bf The proof of Corollary~\ref{th:constr_alg_convergence_scvx}: Constrained problems}\label{apdx:th:constr_alg_convergence_scvx}
\aftersubsec
The augmented Lagrangian associated with problem \eqref{eq:constr_cvx2} is $\Lc_{\rho} (x, w, y) := f(x) + \psi(w) + \iprods{y, Kx + Bw - b} + \frac\rho2 \norms{Kx + Bw - b}^2$. Let $\tilde{w}^{k + 1} := \frac1{\tau_k} \left[w^{k + 1} - (1 - \tau_k)w^k\right]$. The optimality condition of the $w^{k + 1}$-update in \eqref{eq:pd_scheme3_semistr} and the convexity of $\psi$ imply for $w \in \R^q$ that
\myeqfn{
\arraycolsep=0.2em
\begin{array}{lcl}
	\psi(w^{k + 1}) & \leq & (1 - \tau_k)\psi(w^k) + \tau_k \psi(w)\vspace{1.25ex}\\
	& & + {~} \tau_k \iprods{B^\top [\tilde{y}^k + \rho_k (K\hat{x}^k + Bw^{k + 1} - b)] + \nu_0 (w^{k + 1} - \hat{w}^k), w - \tilde{w}^{k + 1}}.
\end{array}
}
Using this estimate, we follow the same lines as the proof of Lemma \ref{le:key_est_scvx2} to derive
\myeqf{eq:key_lm_semistr}{
\hspace{-4ex}\arraycolsep=0.2em
\begin{array}{rl}
	\Lc_{\rho_k}{\!\!\!}& (x^{k + 1}, w^{k + 1}, y) - \Lc(x, w, \bar{y}^{k + 1}) \leq (1 - \tau_k)\left[\Lc_{\rho_{k - 1}} (x^k, w^k, y) - \Lc(x, w, \bar{y}^k)\right]\vspace{1.25ex}\\
	& + {~} \frac{\tau_k^2}{2\beta_k} \norms{\tilde{x} - x}^2 - \frac{\tau_k (\tau_k + \beta_k \mu_f)}{2\beta_k} \norms{\tilde{x}^{k + 1} - x}^2 
	+ \frac{\tau_k^2 \nu_0}2 \left[ \norms{\tilde{w}^k - w}^2 - \norms{\tilde{w}^{k + 1} - w}^2\right] \vspace{1.25ex}\\
	& + {~} \frac1{2\eta_k} \left[ \norms{\tilde{y}^k - y}^2 - \norms{\tilde{y}^{k + 1} - y}^2\right] - \frac{(1 - \tau_k)}{2}\left[\rho_{k - 1} - (1 - \tau_k)\rho_k\right]\norms{Kx^k + Bw^k - b}^2\vspace{1.25ex}\\
	& - {~} \frac{\rho_k}2 \left(1 - \rho_k \beta_k \norms{K}^2 - \frac{\eta_k}{\rho_k - \eta_k}\right)\norms{K(x^{k + 1} - \hat{x}^k)}^2 - \frac{\nu_k}2 \norms{w^{k + 1} - \hat{w}^k}^2.
\end{array}
\hspace{-4ex}
}
Plugging the parameter updates \eqref{eq:update_rule_scvx1} and \eqref{eq:update_rule_scvx_init} into \eqref{eq:key_lm_semistr}, we can derive \eqref{eq:constr_alg_convergence2} following the same arguments as in the proof of Corollary \ref{th:constr_alg_convergence}.

If we plug the parameter updates \eqref{eq:update_rule_scvx1} and \eqref{eq:small_o_update_rule_scvx_init} into \eqref{eq:key_lm_semistr}, then we can derive \eqref{eq:constr_alg_convergence_faster2} following the same arguments as in the proof of Theorem \ref{th:small_o_convergence_rate_scvx}. 
We omit the details.
\proofbox

\bibliographystyle{siamplain}

\end{document}